\numberwithin{equation}{section}
\numberwithin{figure}{section}
\newtheorem{theorem}{Theorem}[section]
\newtheorem{lemma}[theorem]{Lemma}
\newtheorem{proposition}[theorem]{Proposition}
\newtheorem{corollary}[theorem]{Corollary}
\newtheorem{alg}[theorem]{Algorithm}
\theoremstyle{definition}
\newtheorem{definition}[theorem]{Definition}
\newtheorem{remark}[theorem]{Remark}
\newtheorem{example}[theorem]{Example}
\newcommand{\C}{{\mathbb{C}}}
\newcommand{\Z}{{\mathbb{Z}}}
\newcommand{\N}{{\mathbb{N}}}
\newcommand{\bfa}{\mathbf{a}}
\renewcommand{\t}{\mathfrak{t}}
\newcommand{\wt}{\operatorname{wt}}
\newcommand{\sh}{\operatorname{sh}}
\newcommand{\lift}{\text{lift}}
\newcommand{\row}{\operatorname{row}}
\definecolor{limegreen}{rgb}{0.2, 0.8, 0.2}
\definecolor{amethyst}{rgb}{0.6, 0.4, 0.8}
\definecolor{deeppink}{rgb}{1.0, 0.08, 0.58}
\newcommand{\red}{\textcolor{red}}
\newcommand{\green}{\textcolor{limegreen}}
\begin{document}

\pagestyle{headings}

\title{Crystal Chute Moves on Pipe Dreams}

\author{Sarah Gold}
\address{Sarah Gold, Upper School Mathematics, Friends' Central School, Wynnewood, PA 19096}
\email{sgold@friendscentral.org}
\author{Elizabeth Mili\'cevi\'c}
\address{Elizabeth Mili\'cevi\'c, Department of Mathematics, Haverford College, Haverford, PA 19041}
\email{emilicevic@haverford.edu}
\author{Yuxuan Sun}
\address{Yuxuan Sun, School of Mathematics, University of Minnesota Twin Cities, Minneapolis, MN 55455}
\email{sun00816@umn.edu}

\thanks{The authors were partially supported by NSF Grant DMS-2202017.}

\begin{abstract}
Schubert polynomials represent a basis for the cohomology of the complete flag variety and thus play a central role in geometry and combinatorics. In this context, Schubert polynomials are generating functions over various combinatorial objects, such as rc-graphs or reduced pipe dreams. By restricting Bergeron and Billey’s chute moves on rc-graphs, we define a Demazure crystal structure on the monomials of a Schubert polynomial.  As a consequence, we provide a method for decomposing Schubert polynomials as sums of key polynomials, complementing related work of Assaf and Schilling via reduced factorizations with cutoff, as well as Lenart’s coplactic operators on biwords. 
\end{abstract}

\maketitle

%%%%%%%%%%%%%%%%%%%%%%%%%%%%%%%%%%%%%%%%%%%%
%%%%%%%%%%%%%%%%%%%%%%%%%%%%%%%%%%%%%%%%%%%%
%%%%%%%%%%%%%%%%%%%%%%%%%%%%%%%%%%%%%%%%%%%%
%%%%%%%%%%%%%%%%%%%%%%%%%%%%%%%%%%%%%%%%%%%%
%%%%%%%%%%%%%%%%%%%%%%%%%%%%%%%%%%%%%%%%%%%%

%!TEX root = main.tex

\section{Introduction}\label{sec:intro}

Schubert polynomials are fundamental objects which lie at the intersection of geometry, representation theory, and algebraic combinatorics. By a classical theorem of Borel, the cohomology of the manifold of complete flags in $\C^n$ with integer coefficients is canonically isomorphic to the quotient of $\Z[x_1, \dots, x_n]$ by the ideal generated by the symmetric polynomials without constant term \cite{Borel}. The geometry of the flag variety is best captured by the cohomology classes of the Schubert varieties, which correspond to Schubert polynomials under Borel's isomorphism, generalizing the role of the Schur polynomials in the cohomology of the Grassmannian. In addition to encoding geometric information about the flag variety, individual Schubert polynomials also exhibit rich combinatorial and representation theoretic structures, as developed in \cite{LS-NCschuberts, KohnertThesis, reiner_key_1995, lenart_unified_2004, assaf_demazure_2018} and explored further in the present paper.

\subsection{Schubert and key polynomials}

Given any permutation $w \in S_n$, the \emph{Schubert polynomial} $\mathfrak{S}_w \in \Z[x_1, \dots, x_n]$ can be calculated recursively using a sequence of divided difference operators, by the original definition of Lascoux and Sch\"utzenberger \cite{LS-schuberts}, inspired by the work of Demazure \cite{Demazure} and Bernstein--Gel$'$fand--Gel$'$fand \cite{BGG}. 
Based on a conjecture of Stanley, the first combinatorial formula for Schubert polynomials was given by Billey, Jockusch, and Stanley using the language of \emph{rc-graphs} \cite{billey_combinatorial_1993}, with an alternate proof by Fomin and Stanley \cite{fomin_schubert_1994}.  An equivalent combinatorial description for Schubert polynomials was later provided by Fomin and Kirillov \cite{FominKirillov}, rebranded by Knutson and Miller as \emph{reduced pipe dreams} \cite{KnutsonMiller}, following the conventions of Bergeron and Billey \cite{BilleyBergeron}. Besides being attractive ways to visually represent Schubert polynomials, pipe dreams generalize to flag manifolds the role of the semistandard Young tableaux for Grassmannians, while admitting generalizations to other cohomological contexts.

Many combinatorial models for Schubert polynomials also involve a family of operators, which permute the individual monomials. To highlight several examples most closely related to this work, Bergeron and Billey define \emph{chute and ladder moves} on rc-graphs  \cite{BilleyBergeron}, the inspiration for which they attribute to Kohnert's thesis \cite{KohnertThesis}.  Miller provides a mitosis algorithm which lists reduced pipe dreams recursively by induction on the weak order on $S_n$ \cite{Miller}.
Lenart develops operations on biwords which correspond to the coplactic operators on tableaux \cite{lenart_unified_2004}. Morse and Schilling define a family of operators on \emph{reduced factorizations} in \cite{MorseSchilling}, which restricts to an action on Schubert polynomials via the semi-standard key tableaux of Assaf and Schilling \cite{assaf_demazure_2018}. 

All of the operators mentioned above encode useful combinatorics about Schubert polynomials; however, only some of them additionally carry representation-theoretic information. The most natural approach to track the representation theory is often through Kashiwara's \emph{crystals} \cite{KashiwaraCrystal}, which are graphical models for the irreducible representations of a complex semisimple Lie algebra.  Lenart summarizes many results in \cite{lenart_unified_2004} using the language of crystal operators rooted in a \emph{pairing process} on rc-graphs, though the details are carried out via jeu de tacquin on biwords, most naturally associated with the combinatorics of semistandard Young tableaux. Assaf and Schilling prove more explicitly in \cite[Theorem 5.11]{assaf_demazure_2018} that the set of all reduced factorizations for $w \in S_n$ satisfying an additional \emph{cutoff criterion} decomposes as a union of Demazure crystals; we review their result as Theorem \ref{thm:main-rfc}. 

In type $A_{n-1}$, crystals are indexed by a highest weight vector $\lambda$, which is a partition with $n$ parts.  The character of the crystal $B(\lambda)$ is the Schur polynomial $s_\lambda(x_1, \dots, x_n)$. \emph{Demazure crystals} are subsets of $B(\lambda)$ truncated by a permutation $\pi \in S_n$ which restricts the set of permitted operators.  The character of the Demazure crystal $B_\pi(\lambda)$ is the \emph{key polynomial} $\kappa_a(x_1, \dots, x_n)$ indexed by the composition $a = \pi(\lambda)$. 
Explicit combinatorial formulas for key polynomials, typically as sums over certain tableaux, date back to Lascoux and Sch\"utzenberger \cite{LS-keys}.

The decomposition of a combinatorial model for Schubert polynomials into a union of Demazure crystals thus also yields a description of how $\mathfrak{S}_w$ is expressed as a sum of key polynomials $\kappa_a$, as in \cite[Corollary 5.12]{assaf_demazure_2018}. Tableaux versions of such formulas include the original of Lascoux and Sch\"utzenberger \cite{LS-NCschuberts}, a related result of Reiner and Shimozono \cite{reiner_key_1995} on factorized row-frank words, and so on. The main goal of this paper is to provide such a decomposition for Schubert polynomials as sums of key polynomials, expressed in terms of reduced pipe dreams.

\begin{figure}[ht]
    \centering
    \includegraphics{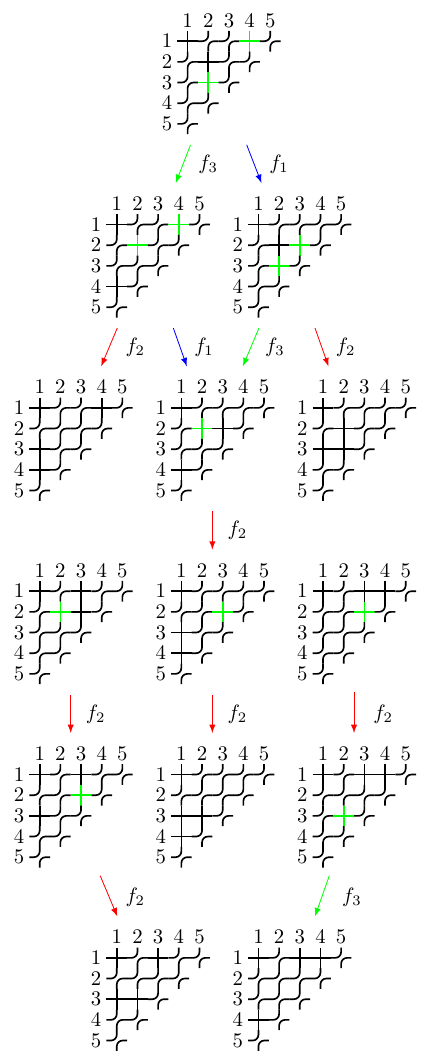}
    \caption{Demazure crystal structure on reduced pipe dreams for $w = [21543]$.}
    \label{fig:main-ex}
\end{figure}

\subsection{Main results}

Inspired by the chute moves of \cite{BilleyBergeron} on rc-graphs,  we develop a crystal structure on the monomials of a Schubert polynomial, giving a method for decomposing Schubert polynomials as sums of key polynomials, complementing the closely related works \cite{assaf_demazure_2018, lenart_unified_2004}.
We summarize this result as Theorem \ref{thm:main-key} below, and we direct the reader to Section \ref{sec:schuberts} for precise definitions of all relevant terminology.

\begin{theorem}\label{thm:main-key} 
Given any $w \in S_n$, the Schubert polynomial may be expressed as 
\begin{equation*}
    \mathfrak{S}_w(x_1, \dots, x_n) = \sum\limits_{\substack{D \in RP(w) \\ e_i(D) = 0,\; \forall 1 \leq i < n}} \kappa_{a_D}(x_1, \dots, x_n),
\end{equation*}
where the composition $a_D$ is uniquely determined by the highest weight pipe dream $D$; see Theorem \ref{thm:AlgCor}.
\end{theorem}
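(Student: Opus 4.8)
The plan is to exhibit $RP(w)$ as a Demazure crystal under operators obtained by restricting the Bergeron--Billey chute moves, and then to read the key-polynomial expansion directly off its decomposition into Demazure components. I would begin from the combinatorial formula of Billey--Jockusch--Stanley, in the reduced pipe dream form due to Fomin--Kirillov: $\mathfrak{S}_w(x_1,\dots,x_n) = \sum_{D \in RP(w)} x^{\wt(D)}$, where the $i$-th entry of $\wt(D)$ counts the crossings of $D$ in row $i$. It therefore suffices to partition $RP(w)$ into blocks whose generating functions are exactly the asserted key polynomials $\kappa_{a_D}$.

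The central step is to verify that the operators $e_i,f_i$ on $RP(w)$ coming from the restricted chute moves make $(RP(w),e_i,f_i,\wt)$ a Demazure crystal of type $A_{n-1}$ --- that is, $RP(w) = \bigsqcup_j \mathcal{B}_j$ as edge-colored weighted graphs, each $\mathcal{B}_j$ isomorphic to a Demazure crystal $B_{\pi_j}(\lambda_j) \subseteq B(\lambda_j)$. I see two natural routes. One is a direct verification: check the local crystal axioms (weight compatibility and the string-length identity $\varphi_i - \varepsilon_i = \langle \wt, \alpha_i^\vee\rangle$) together with the recognition criterion for Demazure crystals, namely closure of $RP(w)$ under the raising operators $e_i$ and the interval property that $\{k : f_i^k(D)\in RP(w)\}$ is an initial segment of the $i$-string through $D$ for every $D$ and every $i$. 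The other is to build a weight-preserving bijection from $RP(w)$ onto the reduced factorizations of $w$ satisfying the cutoff criterion that intertwines the chute-move operators with the Morse--Schilling operators, and then invoke Theorem \ref{thm:main-rfc}. Either way, this structural statement is the crux of the argument.

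Granting the Demazure crystal structure, each block $\mathcal{B}_j$ contains exactly one element annihilated by every $e_i$ with $1 \le i < n$, namely its highest-weight source $D_j$; conversely the highest-weight pipe dreams in $RP(w)$ are precisely these sources, in bijection with the blocks. By the defining property of a Demazure crystal, $\operatorname{char}(\mathcal{B}_j)$ is a key polynomial, and by Theorem \ref{thm:AlgCor} the indexing composition is the composition $a_{D_j}$ extracted algorithmically from $D_j$ (its decreasing rearrangement being the partition $\wt(D_j)=\lambda_j$). Summing characters over blocks then gives
\[
\mathfrak{S}_w \;=\; \sum_j \operatorname{char}(\mathcal{B}_j) \;=\; \sum_{\substack{D \in RP(w)\\ e_i(D)=0,\ \forall\, 1 \le i < n}} \kappa_{a_D},
\]
which is the assertion.

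I expect the main obstacle to be exactly the Demazure-crystal claim, and inside it the choice of which chute moves to retain: the restriction must be tight enough that the surviving moves assemble into honest crystal operators, yet loose enough to keep precisely the Demazure-admissible lowering edges within each block, so that a block is a genuine $B_\pi(\lambda)$ rather than a full $B(\lambda)$ or some other subset. Pinning this down seems to demand a careful analysis of how chute moves in distinct rows of a pipe dream interact --- in particular of when a chute move is reversible and how irreversibility propagates through the diagram --- and I would regard that analysis as the technical heart of the paper. A secondary point to treat carefully is the well-definedness of the algorithm behind Theorem \ref{thm:AlgCor}: that $a_D$ depends only on $D$, and that it really does index the key polynomial equal to $\operatorname{char}(\mathcal{B}_j)$.
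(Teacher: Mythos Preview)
Your proposal is correct and matches the paper's approach: the paper takes exactly your route (b), constructing a weight-preserving bijection $\phi: RP(w) \to RFC(w^{-1})$ (note the inverse) that intertwines the crystal chute moves with the Morse--Schilling operators, then invoking Theorem~\ref{thm:main-rfc} to obtain the Demazure crystal structure, from which Theorem~\ref{thm:main-key} follows as an immediate corollary. The ``choice of which chute moves to retain'' that you flag is resolved via a pairing process on adjacent rows, and the technical heart is the intertwining verification rather than a direct analysis of chute-move interactions.
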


Our \emph{crystal chute moves} are either raising or lowering operators, denoted $e_i$ and $f_i$, respectively.  If the raising operator $e_i(D)$ applied to a reduced pipe dream $D$ for the given permutation $w \in S_n$ equals zero for all $1 \leq i <n$, then we say $D \in RP(w)$ is a \emph{highest weight pipe dream}; see Lemma \ref{lem:inversechutes} for a characterization. The highest weight pipe dreams naturally index the key polynomials in this decomposition, as they are in bijection with a pair given by a partition $\lambda_D$ and permutation $\pi_D \in S_n$ such that $a_D = \pi_D(\lambda_D)$ for a unique composition $a_D$. Figure \ref{fig:main-ex} shows how $\mathfrak{S}_{[21543]}$ decomposes as the sum of three key polynomials, indexed by the three pipe dreams with no incoming lowering edges, having weights $\lambda_D \in \{(2,1,1,0), (2,2,0,0), (3,1,0,0)\}$ recording the number of crosses in each row, with respective truncating permutations $\pi_D \in \{ s_2s_1s_3,s_2,s_3s_2\}$ read from the edges.

The proof of Theorem \ref{thm:main-key} proceeds by showing that the set of reduced pipe dreams for $w$ admits a Demazure crystal structure, determined by applying all possible crystal chute moves as defined in Section \ref{sec:crystalchutes};  see Theorem \ref{thm:main}, from which Theorem \ref{thm:main-key} follows as an immediate corollary. To obtain this crystal structure, we define an equivariant bijection from reduced pipe dreams to reduced factorizations with cutoff, which intertwines with the crystal chute moves.  Theorem \ref{thm:main} then becomes the pipe dream analog of \cite[Theorem 5.11]{assaf_demazure_2018}; equivalently, Theorem \ref{thm:main-key} is the pipe dream analog of \cite[Corollary 5.12]{assaf_demazure_2018}. Although we pass implicitly through the reduced-word compatible sequences of \cite{billey_combinatorial_1993} in Section \ref{sec:rcgraphs}, we highlight Proposition \ref{prop:wtBijPD} and Corollary \ref{cor:PDtoRFCwhole}, which provide formulas directly between reduced pipe dreams and reduced factorizations with cutoff.

In either the language of reduced pipe dreams or reduced factorizations with cutoff, formulas for Schubert polynomials are expressed as a sum over a single combinatorial object, giving rise to the most natural indexing sets for capturing the Demazure crystal structure. In addition, while many related constructions on Schubert polynomials involve a combination of raising and lowering operators, the Demazure crystal structure in Theorem \ref{thm:main} can be obtained by applying exclusively lowering crystal chute moves to a highest weight pipe dream.  Since it is not always possible to reach every vertex in the Demazure crystal by applying raising operators starting with a single node, related results such as \cite[Theorem 3.7]{BilleyBergeron} or \cite[Algorithm 2.6]{lenart_unified_2004} are purely combinatorial in nature, and do not reflect the representation theory captured by Theorem \ref{thm:main-key}.

\subsection{Future directions}

Our primary motivation for developing this particular crystal structure on Schubert polynomials, expressed specifically via the combinatorics of reduced pipe dreams, is to lay the foundation for further connections to both representation theory and geometry.  For example, while many of the chute moves of \cite{BilleyBergeron} are not crystal operators, it would be interesting to understand what geometric meaning non-crystal chute moves might contain, as they stitch together the different components of the Demazure crystal associated to $\mathfrak{S}_w$.

Beyond the original context of the manifold of complete flags in $\C^n$, \cite{KnutsonMillerShimozono} presents a nonnegative formula for the quiver polynomials of \cite{BuchFulton}, which in turn relate to universal, double, and quantum Schubert polynomials.  Experimentation with the bumpless pipe dreams of \cite{LamLeeShimozono} may lead to connections to the back stable Schubert calculus of the infinite flag variety. The origin of our primary inspiration \cite{assaf_demazure_2018} is rooted in the study of Stanley symmetric functions \cite{StanleySymm}, which are stable limits of Schubert polynomials.  In turn, the work of \cite{assaf_demazure_2018} relies on the crystal structure on the set of affine factorizations from \cite{MorseSchilling}, which yields certain Schubert structure constants in the homology of the affine Grassmannian. As such, the combinatorial model of reduced pipe dreams lends itself especially well to generalizations to other cohomological contexts.

\subsection{Organization of the paper}

Section \ref{sec:schuberts} opens with a self-contained introduction to reduced pipe dreams as generating functions for Schubert polynomials. We proceed to develop the theory of crystal chute moves, culminating with the statement of Theorem \ref{thm:main}. We briefly review reduced-word compatible sequences and rc-graphs in Section \ref{sec:rcgraphs}.
The version of Theorem \ref{thm:main} from \cite{assaf_demazure_2018} in terms of reduced factorizations with cutoff is presented in Section \ref{sec:rfcs}, together with a detailed summary of the corresponding crystal operators and the required bijection from reduced-word compatible sequences. The proofs of Theorems \ref{thm:main-key} and \ref{thm:main} then follow in Section \ref{sec:intertwine} by showing that the weight-preserving bijection from reduced pipe dreams to reduced factorizations with cutoff in Proposition \ref{prop:wtBijPD} is equivariant with respect to the crystal chute moves. We present an algorithm in Section \ref{sec:permutation} for obtaining a new diagram from a highest weight pipe dream, which encodes the truncating permutation for the corresponding Demazure crystal; see Theorem \ref{thm:AlgCor}.

\subsection*{Acknowledgements}

The authors wish to thank Anne Schilling and Sarah Mason for helpful correspondence, and Alex Woo for a useful reference. This work has also benefited indirectly from numerous discussions with Jo\~ao Pedro Carvalho, as well as directly by providing his tikz code for chute moves. Many experiments which led to these results were conducted in Sage, and we thank the Sage combinatorics developers for implementing related procedures \cite{SageMath, Sage-combinat}.

%!TEX root = main.tex

\section{A Crystal Structure on Pipe Dreams}\label{sec:schuberts}

In Section \ref{sec:pipedreams}, we review the combinatorics of Schubert polynomials in the language of reduced pipe dreams. We then define crystal chute moves by restricting the chute moves of \cite{BilleyBergeron} on rc-graphs via a pairing process. All necessary properties to show that crystal chute moves define actual crystal operators are proved in Section \ref{sec:crystalchutes}. Finally, we briefly review the construction of Demazure crystals in Section \ref{sec:mainthm}, in order to formally state our main representation-theoretic result as Theorem \ref{thm:main}.

\subsection{Schubert polynomials and pipe dreams}\label{sec:pipedreams}

Lascoux and Sch\"utzenberger originally defined Schubert polynomials recursively \cite{LS-schuberts}. Though non-obvious from the recursive definition, the monomials of Schubert polynomials have non-negative coefficients \cite{billey_combinatorial_1993, fomin_schubert_1994}. 
This paper adopts the viewpoint of Schubert polynomials as generating functions over the rc-graphs of \cite{billey_combinatorial_1993}, equivalently the planar histories or pseudo-line arrangements of \cite{FominKirillov}, subsequently popularized as reduced pipe dreams by \cite{KnutsonMiller}. 

We review the combinatorics of reduced pipe dreams, which index the monomials of Schubert polynomials; see Theorem \ref{thm:PDSchub}. Fix an $n \in \N$ and consider the $n\times n$ grid, indexed such that the box in row $i$ from the top and column $j$ from the left is labeled by $(i,j)$, as for matrix entries. A \emph{pipe dream} is a diagram $D$ obtained by covering each box on the grid with one of two square tiles: a cross \includegraphics[height=\fontcharht\font`\B]{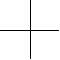}  or an elbow \includegraphics[height=\fontcharht\font`\B]{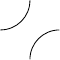}. Further, crosses are only permitted in boxes $(i,j)$ such that $i+j \leq n$, so we will typically only draw the portion of $D$ which lies on or above the main anti-diagonal. 
By connecting the crosses and elbows on each tile in the unique possible way, as shown in Figure \ref{fig:PDex}, we can view the resulting diagram as a network of \emph{pipes} moving north and east, with water flowing in from the left of the grid and out at the top. 

\begin{figure}[h!]
    \centering
    \includegraphics{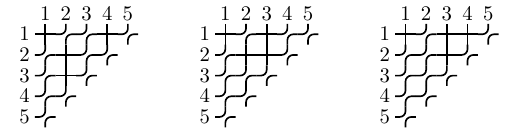}
    \caption{Several reduced pipe dreams for $w = [21543] \in S_5$.}
    \label{fig:PDex}
\end{figure}

The water in each pipe enters and exits from a unique pair of row and column indices, so that each pipe dream corresponds to a permutation on the set $[n] = \{1, \dots, n\}$ as follows. The one-line notation for a permutation $w \in S_n$ records the action of $w$ on $[n]$ in the form $w = [w_1 \cdots w_n]$, where we write $w_i = w(i)$ for brevity. A diagram $D$ is a pipe dream \emph{for the permutation} $w = [w_1 \cdots w_n]$ if the pipe entering row $i$ exits from column $w_i$ for all $i \in [n]$. Each of the three diagrams in Figure \ref{fig:PDex} is a pipe dream for the same permutation $w = [21543] \in S_5$.

A pipe dream is \emph{reduced} if each pair of pipes crosses at most once, as in Figure \ref{fig:PDex}. Denote by $RP(w)$ the set of all reduced pipe dreams for a given permutation $w$. We denote by $D_+$ the set of all boxes of $D$ which are covered by a cross; note that $D_+$ uniquely determines $D$.  Provided that the pipe dream is reduced, \cite[Lemma 1.4.5]{KnutsonMiller} says that the number of crosses in $D \in RP(w)$ equals the length of the permutation, or the number of its inversions, given by $|D_+| = \ell(w) = \#\{ i < j \mid w_i > w_j\}$.

The \emph{weight of a pipe dream} $D \in RP(w)$, denoted by $\wt(D)$, is the weak composition of $\ell(w)$ whose $i^{\text{th}}$ coordinate equals the number of crosses in row $i$ of $D$.  For example, the three weight vectors corresponding to the pipe dreams from Figure \ref{fig:PDex} are $(2,1,1,0), (2,2,0,0),$ and $(3,1,0,0)$ recorded from left to right, all of which happen to be partitions in this example. In general, given any weak composition $a$ with $n$ parts, there exists a unique \emph{shortest} permutation $\pi \in S_n$ such that $\pi^{-1}(a)$ is a partition, meaning that $\ell(\pi)$ is minimal among all such permutations.

Schubert polynomials can be viewed as generating functions over pipe dreams, as illustrated by the following result, originally proved by Billey, Jockusch and Stanley \cite{billey_combinatorial_1993} as Theorem \ref{thm:RCSchub}, later reproved by Fomin and Stanley \cite{fomin_schubert_1994}, and recorded here in the language of pipe dreams.

\begin{theorem}[Corollary 2.1.3 \cite{KnutsonMiller}]\label{thm:PDSchub}
Let $w \in S_n$. Then
\begin{equation}\label{eq:SchubPD}
    \mathfrak{S}_w(x_1, \dots, x_n) = \sum\limits_{D \in RP(w)} \mathbf{x}^{\wt(D)}.
\end{equation}
\end{theorem}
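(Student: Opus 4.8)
The plan is to prove the identity by downward induction on $\ell(w)$ --- equivalently, by induction on the codimension $\binom{n}{2}-\ell(w)$ --- using the recursive characterization of Schubert polynomials of Lascoux and Sch\"utzenberger: $\mathfrak{S}_{w_0}(x_1,\dots,x_n)=x_1^{n-1}x_2^{n-2}\cdots x_{n-1}$ for the longest element $w_0\in S_n$, together with $\mathfrak{S}_w=\partial_i\mathfrak{S}_{ws_i}$ whenever $\ell(ws_i)=\ell(w)+1$, where $\partial_i f=(f-s_if)/(x_i-x_{i+1})$; these relations determine the polynomials $\mathfrak{S}_w$ uniquely. Writing $\mathcal{P}_w:=\sum_{D\in RP(w)}\mathbf{x}^{\wt(D)}$ for the pipe-dream generating function, it then suffices to show that $\mathcal{P}$ satisfies the same base case and the same recursion. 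The base case is immediate: $\ell(w_0)=\binom{n}{2}$ is exactly the number of boxes $(i,j)$ with $i+j\le n$, and every cross of a reduced pipe dream lies in this region, so by the cross count $|D_+|=\ell(w)$ for reduced pipe dreams quoted above, the only candidate in $RP(w_0)$ is the diagram with a cross in every such box; a direct check shows it is reduced and represents $w_0$, its weight is $(n-1,n-2,\dots,1,0)$, and hence $\mathcal{P}_{w_0}=\mathfrak{S}_{w_0}$.

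For the inductive step, take $w\ne w_0$ with an ascent at some position $i$ (so $w_i<w_{i+1}$) and set $v:=ws_i$, so that $\ell(v)=\ell(w)+1$ and $\mathcal{P}_v=\mathfrak{S}_v$ by induction; the goal reduces to proving $\partial_i\mathcal{P}_v=\mathcal{P}_w$. This is the combinatorial core of the argument. Here $v$ has a descent at $i$ (that is, $v_i>v_{i+1}$), and the natural approach is to partition $RP(v)$ into blocks according to the configuration of crosses in rows $i$ and $i+1$ --- precisely the ladder intervals underlying the ladder and chute moves of Bergeron and Billey that the rest of this paper exploits --- and to show that $\partial_i$ applied to $\sum_D\mathbf{x}^{\wt(D)}$ over each block either telescopes to the monomial of a single pipe dream for $w$ or vanishes, the surviving terms assembling to $\mathcal{P}_w$. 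Alternatively, following Fomin and Stanley, one can realize $\mathcal{P}_w$ as the coefficient of the nil-Coxeter monomial $u_w$ in an explicit product over the antidiagonals of the grid, whose terms are indexed by exactly the cross-versus-elbow choices defining a pipe dream, and then translate the action of $\partial_i$ into that algebra.

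The main obstacle is precisely this identity $\partial_i\mathcal{P}_v=\mathcal{P}_w$: carrying out the block decomposition of $RP(v)$ explicitly and verifying the telescoping and cancellation requires either a fixed-point-free involution pairing the pipe dreams that cancel under the Leibniz rule for $\partial_i$, or the full Fomin--Kirillov nil-Coxeter calculus. In the setting of the present paper, however, there is a more economical route that sidesteps this entirely: reading the crosses of a reduced pipe dream in a fixed order produces a reduced word for $w$, recording the row index of each cross produces an associated compatible sequence in the sense of Billey, Jockusch, and Stanley, and $\wt(D)$ is exactly the content of that sequence. This furnishes a weight-preserving bijection between $RP(w)$ and the set of reduced-word--compatible-sequence pairs for $w$, so the formula follows at once from their theorem --- recalled here as Theorem \ref{thm:RCSchub} --- and indeed this identification of reduced pipe dreams with rc-graphs will be used again in Section \ref{sec:rcgraphs}.
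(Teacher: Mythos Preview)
The paper does not supply its own proof of this theorem: it is quoted as a known result (Corollary~2.1.3 of Knutson--Miller, originally due to Billey--Jockusch--Stanley and reproved by Fomin--Stanley), and the surrounding text simply observes that it is the pipe dream rephrasing of Theorem~\ref{thm:RCSchub}, made precise later via the weight-preserving bijection of Lemma~\ref{lem:wtBijRC}. Your final paragraph lands on exactly this reduction, so in that sense you have reproduced the paper's treatment.

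The first two paragraphs of your proposal, however, outline a genuinely different route: a direct downward induction on $\ell(w)$ via divided differences, verifying $\partial_i\mathcal{P}_{ws_i}=\mathcal{P}_w$ combinatorially on pipe dreams. This is essentially the Fomin--Kirillov/Fomin--Stanley strategy (or, on the rc-graph side, the content of Miller's mitosis). You correctly identify that the hard content lies in this identity and that carrying it out requires either a careful involution on $RP(ws_i)$ or the nil-Coxeter machinery --- and then you abandon it in favor of the bijection. So as written, those first paragraphs are a sketch of a valid alternative approach rather than a proof; the only complete argument in your proposal is the reduction to Theorem~\ref{thm:RCSchub}, which matches the paper. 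If your intent was to give a self-contained proof independent of Theorem~\ref{thm:RCSchub}, the divided-difference step remains a genuine gap.
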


\noindent We use $\mathbf{x}$ to denote a monomial in the variables $x_1, \dots, x_n$. Given any vector $\mathbf{v} = (v_1, \dots, v_n) \in \Z_{\geq 0}^n$, the notation $\mathbf{x}^{\mathbf{v}} = x_1^{v_1}\cdots x_n^{v_n}$ is used throughout.

Since the pipe dreams in Equation \eqref{eq:SchubPD} are reduced, the Schubert polynomial $\mathfrak{S}_w$ is homogeneous of degree $\ell(w)$. The reader who is new to Schubert polynomials can simply take Equation \eqref{eq:SchubPD} to be the definition of $\mathfrak{S}_w$. We explain how to use Theorem \ref{thm:PDSchub} to produce a Schubert polynomial in the following example.

\begin{example}\label{ex:SchubPD}
The reduced pipe dreams shown in Figure \ref{fig:PDex} represent 3 of the 14 elements of $RP(w)$ for the permutation $w = [21543] \in S_5$. In the lefthand figure $D$, we see 4 crosses occurring in boxes $(1,1), (1,4), (2,2)$ and $(3,2)$.  Recording their row indexes as $\wt(D) = (2,1,1,0)$, we obtain $\mathbf{x}^{\wt(D)} = x_1^2x_2x_3$ as one monomial of $\mathfrak{S}_w$. The middle and righthand pipe dreams in Figure \ref{fig:PDex} contribute $x_1^2x_2^2$ and $x_1^3x_2$, respectively, to the Schubert polynomial $\mathfrak{S}_w$; see Example \ref{ex:RCSchub} for the complete formula for this particular $\mathfrak{S}_w$.
\end{example}

\subsection{Crystal chute moves}\label{sec:crystalchutes}

In this section, we describe a family of operators on the set $RP(w)$ of reduced pipe dreams for a given permutation, which we show in our main theorem  produces a Demazure crystal structure on the monomials of $\mathfrak{S}_w$.

\begin{definition}\label{def:pairing}
Given a reduced pipe dream \(D\) for a permutation in $S_n$, we fix a row index $i \in [n]$. Denote the rightmost cross in row \(i\) by $c$.  (Since crosses only occur in boxes $(i,j)$ such that $i + j \leq n$, then $D$ has no crosses in row $n$.) We define a \emph{pairing process on row} $1 \leq i <n$ of $D$ as follows:
	 \begin{enumerate}
        \item Look for an unpaired cross $c_+$ in row \(i+1\) whose column index is greater than or equal to that of $c$, so that $c_+$ lies below and weakly to the right of $c$ in the diagram $D$. If there are multiple such $c_+$, choose the leftmost $c_+$.
   \begin{enumerate}
       \item If such a $c_+$ exists, we say that $c$ and $c_+$ are \emph{paired}. 
       \item If no such $c_+$ exists, we say that $c$ is \emph{unpaired}.
   \end{enumerate}
		 \item Denote by $c'$ the cross in row $i$ which is both closest to $c$ and lies to the left of $c$.
   \begin{enumerate}
       \item If such a $c'$ exists, we reset \(c := c'\) and start again from step (1). 
       \item If no such $c'$ exists, the pairing process on row $i$ is complete.
   \end{enumerate}
	\end{enumerate}
\end{definition}

We illustrate the pairing process on the righthand pipe dream from Figure \ref{fig:PDex}.

\begin{example}\label{ex:pairing}
Fix $i=1$ and identify $c = (1,4)$ as the rightmost cross in row 1. Since there are no crosses in row 2 which lie weakly to the right of $c$, then $c_+$ does not exist and $c$ is unpaired in step (1b).  We indicate this by coloring $c=(1,4)$ in red in the figure below.

 \begin{figure}[h!]
    \centering
    \includegraphics{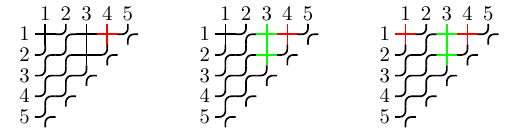}
    \caption{The pairing process applied to row 1 of a reduced pipe dream.}
    \label{fig:pairing}
\end{figure}

In step (2), we identify $c' = (1,3)$ as the cross in row 1 closest to and left of the original $c=(1,4)$. We thus return to step (1) applied to $c=(1,3)$ instead. In this case, we identify $c_+ = (2,3)$ as a cross in row 2 which is weakly right of $c = (1,3)$, and so these crosses get paired in step (1a). We indicate this pairing by coloring the crosses green.

Turning now to step (2), we identify $c' = (1,1)$ as the only remaining cross in row 1, and we return to step (1) with $c=(1,1)$. However, all crosses in row 2 which are weakly to the right of $c=(1,1)$ have already been paired, and so no unpaired $c_+$ exists. We color $c=(1,1)$ red to indicate that it is unpaired in step (1b).  The pairing process is now complete, having analyzed all crosses in the designated row 1.
\end{example}

The outcome of an individual cross in the pairing process is highly dependent on the initial choice of the row. For example, although the cross $(2,3)$ is paired in Example \ref{ex:pairing}, it is unpaired when we run the pairing process on row 2, as there are no crosses weakly right of it in row 3.

After running the pairing process on row $i$ of $D \in RP(w)$, we define an operator $f_i$ on $D$ which produces another element of $RP(w)$ whenever it is nonzero; see Lemma \ref{prop:f_iwelldef} below. This family of operators forms a special case of the chute moves on rc-graphs originally defined in \cite[Section 3]{BilleyBergeron}, which explains our choice of nomenclature. Billey and Bergeron further credit a conjecture in Kohnert's thesis \cite{KohnertThesis} as their inspiration for chute moves.

\begin{definition}\label{def:crystalchutes} 
    Let $D \in RP(w)$ for $w \in S_n$. Fix an $1 \leq i < n$ and run the pairing process on row $i$ of $D$.
    If all crosses in row $i$ are paired, then set $f_i(D) = 0$.  Otherwise, denote by $(i,j)\in D_+$ the leftmost unpaired cross in row $i$. 
    
    If $(i,k) \in D_+$ for all $1 \leq k \leq j$, then set $f_i(D) = 0$. Otherwise, define $m \in \N$ such that:
    \begin{enumerate}
        \item[(a)] $(i,j-m), (i+1, j-m) \notin D_+$ and
        \item[(b)] $(i,j-k), (i+1,j-k) \in D_+$ for all $1 \leq k <m$.
    \end{enumerate}
    Define a new diagram $f_i(D)$ by
    \begin{equation*}
        f_i(D)_+ = D_+ \backslash \left\{ (i,j) \right\} \cup \{(i+1, j-m)\}.
    \end{equation*}
    The family of operators $f_i$ for $1 \leq i <n$ are called \emph{(lowering) crystal chute moves}.
\end{definition}

    In words, the crystal chute move $f_i$ exchanges the leftmost unpaired cross at $(i,j)$ and the elbow at $(i+1, j-m)$, where $m$ is chosen such that the rectangle strictly between this pair of tiles is filled by crosses.  See Figure \ref{fig:chute} below for an illustration of the local effect of a crystal chute move.
    
\begin{figure}[h!]
    \centering
    \includegraphics[width=4in]{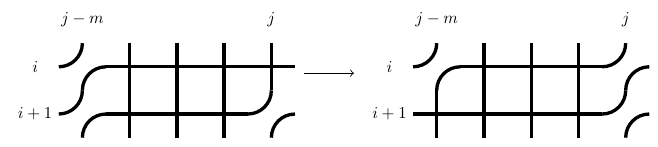}
    \caption{A (crystal) chute move swaps the unpaired cross on the upper right with the elbow on the lower left, preserving the rectangle of crosses in between. 
    }
    \label{fig:chute}
\end{figure}

\begin{remark}
In \cite{BilleyBergeron}, chute moves are defined on all crosses such that the integer $m \in \N$ is defined, without respect to any pairing process.  The advantage of their more generous application of chute moves is that all monomials of the Schubert polynomial $\mathfrak{S}_w$ can be obtained by applying all possible chute moves to a single element $D_{\operatorname{top}} \in RP(w)$ as in \cite[Theorem 3.7]{BilleyBergeron}.  In Theorem \ref{thm:main} below, however, we show that utilizing only the crystal chute moves produces a Demazure crystal structure on the monomials of $\mathfrak{S}_w$. 
\end{remark}

\begin{example}\label{ex:fpipes}
We illustrate how to apply the crystal chute moves on several different pipe dreams.

\begin{enumerate}    
    \item First recall the pipe dream $D$ for $w = [21543]$ from Example \ref{ex:pairing}, for which the pairing process on row 1 is depicted in Figure \ref{fig:pairing}. Two crosses in row 1 remain unpaired, the leftmost of which is $(1,1) \in D_+$. However, since $(1,k) \in D_+$ for all $1 \leq k \leq 1$, then $f_1(D) = 0$ by Definition \ref{def:crystalchutes}.  
    
    By contrast, there is a \emph{nonzero} chute move on row 1 of $D$ in the sense of \cite{BilleyBergeron}, which would instead move the rightmost unpaired cross at $(1,4) \in D_+$ over the rectangle of green crosses in Figure \ref{fig:pairing} and into position $(2,2)$. This chute move is not a \emph{crystal} chute move, however, as it does not respect our pairing process.

    \item To provide another example more closely resembling the general setup depicted in Figure \ref{fig:chute}, consider the sequence shown in Figure \ref{fig:crystalchute-ex}, in which we instead begin with the lefthand pipe dream $D$ for $w = [21543]$ from Figure \ref{fig:PDex}.  If we run the pairing process on row 1, the leftmost unpaired cross is $(1,4) \in D_+$. Properties (a) and (b) hold for $m=1$, and the corresponding rectangle of crosses between $(1,4) \in D_+$ and the elbow at $(2,3)$ is empty in this case. To apply $f_1$, the red cross in $(1,4)$ moves to the blue elbow in position $(2,3)$, resulting in the middle diagram in Figure \ref{fig:crystalchute-ex}.  

  \begin{figure}[ht]
    \centering
    \includegraphics{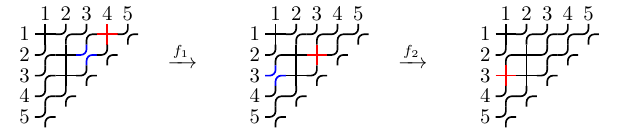}
    \caption{Applying a sequence of crystal chute moves to a reduced pipe dream.}
    \label{fig:crystalchute-ex}
    \end{figure}

    Running the pairing process now instead on row 2 of the middle pipe dream, $(2,3)$ is the leftmost unpaired cross, and $m=2$, corresponding to the tile of paired crosses in rows 2 and 3 which are preserved under applying $f_2$.  Here instead, the red cross at $(2,3)$ jumps over this rectangle of crosses to the blue elbow in position $(3,1)$, resulting in the third diagram in the figure above.
    \end{enumerate}
\end{example}

The following proposition justifies our choice of terminology for the crystal chute moves.

\begin{proposition}\label{prop:f_iwelldef}
    The lowering crystal chute move $f_i: RP(w) \to RP(w) \cup \{0\}$ is a well-defined map for all $1 \leq i < n$. Moreover, for any $D \in RP(w)$ such that $f_i(D) \neq 0$, we have $\wt(f_i(D)) = \wt(D) - \alpha_i$, where $\alpha_i = e_i-e_{i+1}$.
\end{proposition}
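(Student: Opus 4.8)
The plan is to verify the two assertions separately. For the weight claim, observe that when $f_i(D) \neq 0$, Definition~\ref{def:crystalchutes} removes exactly one cross from row $i$ (the leftmost unpaired cross at $(i,j)$) and adds exactly one cross to row $i+1$ (at position $(i+1, j-m)$); all other boxes are untouched. Since $\wt(D)$ records the number of crosses in each row, the $i^{\text{th}}$ coordinate decreases by one, the $(i+1)^{\text{th}}$ coordinate increases by one, and all other coordinates are unchanged, which is exactly $\wt(f_i(D)) = \wt(D) - \alpha_i$ with $\alpha_i = e_i - e_{i+1}$. This part is essentially immediate once well-definedness is established; the content is in the first claim.

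For well-definedness, I would break the argument into three checks. First, the integer $m$ is well-defined: by hypothesis $(i,j) \in D_+$ is the leftmost unpaired cross and it is \emph{not} the case that $(i,k)\in D_+$ for all $1 \le k \le j$, so there is some box strictly left of $(i,j)$ in row $i$ that is an elbow; walking left from $(i,j)$ through a (possibly empty) run of boxes that are crosses in \emph{both} rows $i$ and $i+1$, one must eventually hit a column $j-m$ where this fails, and the key point is that this first failure occurs at a box $(i,j-m)$ that is an elbow with $(i+1,j-m)$ also an elbow. This is where the pairing process does real work: if the run of doubled crosses were broken by a cross in row $i$ sitting above an elbow in row $i+1$, that row-$i$ cross would be unpaired (nothing below-and-weakly-right of it among the relevant crosses has been paired yet, reading right to left) and strictly left of $(i,j)$, contradicting that $(i,j)$ is the \emph{leftmost} unpaired cross; one should spell out the pairing bookkeeping to confirm the paired crosses in row $i$ strictly between columns $j-m$ and $j$ are paired precisely with the crosses directly beneath them, so that moving $(i,j)$ down does not disturb any existing pairings. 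Second, $f_i(D)_+$ is the cross-set of an honest pipe dream: the new cross $(i+1,j-m)$ satisfies $(i+1)+(j-m) \le n$ because $(i,j) \in D_+$ forces $i+j \le n$ and $m \ge 1$, so the anti-diagonal constraint holds. Third, $f_i(D)$ is reduced and still a pipe dream for $w$: here I would invoke the fact that $f_i$ is a restriction of the Bergeron--Billey chute moves of \cite[Section~3]{BilleyBergeron}, which are already known to send $RP(w)$ to $RP(w)$ — a chute move swaps a cross over a solid rectangle of crosses into the elbow at the opposite corner, and Bergeron--Billey show this preserves both the permutation and reducedness. Since our $m$ is chosen exactly so that the boxes strictly between $(i,j)$ and $(i+1,j-m)$ form such a solid rectangle of crosses, the operation $f_i$ is literally one of their chute moves, and the conclusion transfers.

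The main obstacle is the first check: proving that the leftmost-unpaired hypothesis guarantees $m$ exists \emph{and} that the boxes $(i,j-k)$ and $(i+1,j-k)$ for $1 \le k < m$ are genuinely all crosses, i.e.\ that the rectangle being chuted over is solid on both rows. The subtlety is that the definition \emph{postulates} properties (a) and (b) for $m$, so one must argue such an $m$ actually exists given the stated hypotheses (that $(i,j)$ is unpaired, leftmost unpaired, and not preceded by a solid row-$i$ block back to column $1$). I expect this to reduce to a careful reading of the pairing process run right-to-left: I would argue that as we scan left from $(i,j)$, each row-$i$ cross we encounter that sits above a row-$i+1$ cross remains available to be paired with the cross below it (nothing strictly between has consumed it), so it is paired; the first row-$i$ box that is either an elbow, or a cross above an elbow, is reached before column $1$ by hypothesis, and it cannot be a cross above an elbow without being unpaired and hence contradicting leftmost-unpairedness — so it is an elbow, and the box beneath it is an elbow too (else that row-$i+1$ cross... — actually one checks directly that $(i+1,j-m)$ must be an elbow by the same reducedness/pairing analysis, or simply notes it as part of the definition's requirement and confirms consistency). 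Once this structural lemma is in hand, everything else is bookkeeping.
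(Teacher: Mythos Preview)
Your proposal is correct and follows essentially the same route as the paper: verify that $m$ is well-defined by a column-by-column scan showing the boxes immediately left of the unpaired cross are either both crosses or both elbows, then invoke Bergeron--Billey chute moves for $f_i(D)\in RP(w)$, and read off the weight change. The paper's only substantive addition is the case you trail off on: when scanning left you reach an elbow at $(i,j-k)$ sitting above a \emph{cross} at $(i+1,j-k)$, the paper rules this out by tracing pipes and observing that the same pair would cross at both $(i+1,j-k)$ and $(i,j)$, contradicting reducedness (this uses that $(i+1,j)$ is forced to be an elbow since $(i,j)$ is unpaired); you correctly flag reducedness as the relevant tool, so filling this in is straightforward.
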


\begin{proof}
    In case there are either no unpaired crosses in row $i$ after running the pairing process on row $i$ of $D$ or all tiles to the left of the leftmost unpaired $(i,j) \in D_+$ are also crosses, we have $f_i(D) = 0$.  Henceforth, we thus assume that there exists an unpaired cross in row $i$ and that there exists $1 \leq k < j$ such that $(i,j-k) \notin D_+$.
    
    We first claim that the integer $m \in \N$ is well-defined.  If the tiles to the left of $(i,j) \in D_+$ in rows $i$ and $i+1$ are not both elbows, we claim that they are both crosses.  If $(i,j-1) \in D_+$ but $(i+1,j-1) \notin D_+$, then the cross at $(i,j-1)$ is unpaired and lies farther left than $(i,j)$, a contradiction. Now suppose $(i,j-1) \notin D_+$ but $(i+1,j-1) \in D_+$. Note that $(i,j)$ being unpaired forces $(i+1,j) \notin D_+$, and so we have elbows on the diagonal and crosses on the anti-diagonal of the square in rows $i$ and $i+1$ and columns $j-1$ and $j$. Connecting pipes as in Figure \ref{fig:nonreduced},
     \begin{figure}[h]
    \centering
    \includegraphics{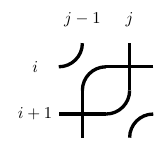}
    \caption{Rectangles of crosses not bordered by elbows are non-reduced.}
    \label{fig:nonreduced}
    \end{figure} 
    we see that the same pair if pipes crosses at both $(i+1,j-1)$ and $(i,j)$, contradicting $D \in RP(w)$. Therefore, the tiles immediately left of $(i,j)$ in rows $i$ and $i+1$ are either both elbows or both crosses.

    If $(i,j-1), (i+1, j-1) \notin D_+$, then property (a) holds and (b) is vacuously true for $m=1$.  If $(i,j-1), (i+1, j-1) \in D_+$, the same argument as above proves that the tiles to the left of these two are either both elbows or both crosses. Moving left through rows $i$ and $i+1$ and repeating this argument, the first pair of elbows we obtain determines the integer $m$ such that both $(i,j-m), (i+1,j-m) \notin D_+$ and $(i,j-k), (i+1,j-k) \in D_+$ for all $1 \leq k <m$. The fact that such an $m$ exists is guaranteed by the hypothesis that $(i,j-k) \notin D_+$ for some $1 \leq k < j$.

    Next we show that $f_i(D) \in RP(w)$. The operator $f_i$ preserves the permutation associated to the pipe dream $D \in RP(w)$, as can be seen by noting in Figure \ref{fig:chute} that the pipes all enter (resp.~exit) from the same rows (resp.~columns) both before and after applying the operator; see also \cite[Lemma 3.5]{BilleyBergeron}. In addition, by definition $|D_+| = |f_i(D)_+|$ so that the pipe dream $f_i(D)$ for $w$ is reduced via \cite[Lemma 1.4.5]{KnutsonMiller}.

    Finally, recall that $\wt(D)$ is the weak composition whose $i^{\text{th}}$ coordinate equals the number of crosses in row $i$ of $D$. Applying $f_i(D)$ moves one cross out of row $i$ and into row $i+1$, so that $\wt(f_i(D)) = \wt(D) - e_i + e_{i+1} = \wt(D) - \alpha_i$.
\end{proof}

We now provide a criterion for a pipe dream to be the result of applying a crystal chute move, giving rise in Definition \ref{def:inversechutes} to an inverse to the crystal chute moves; compare \cite[Lemma 3.6]{BilleyBergeron}.

\begin{lemma}\label{lem:inversechutes}
    Let $D \in RP(w)$ for some $w \in S_n$. Then $D$ is the result of a crystal chute move $f_i$ for some $1 \leq i < n$ if and only if there exists an unpaired cross in row $i+1$, after running the pairing process on row $i$ of $D$.
\end{lemma}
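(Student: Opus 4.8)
\emph{Proof proposal.} The plan is to establish the two implications separately, in each case arguing directly from the pairing process of Definition~\ref{def:pairing} and the explicit description of $f_i$ in Definition~\ref{def:crystalchutes}; the distinguished unpaired cross in row $i+1$ that the argument produces is exactly what will define the inverse operator $e_i$ in Definition~\ref{def:inversechutes}. Fix $1\le i<n$. Two facts about reducedness will be used repeatedly: a diagram for $w$ with exactly $\ell(w)$ crosses is automatically reduced by \cite[Lemma~1.4.5]{KnutsonMiller}, and any local configuration forcing the same pair of pipes to cross twice is forbidden --- this includes the $2\times2$ obstruction of Figure~\ref{fig:nonreduced}, but also ``longer'' obstructions in which two pipes travel around a block of crosses and recross, which the reverse direction will need.

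For the ``only if'' direction, suppose $D=f_i(D')$ and take the notation of Definition~\ref{def:crystalchutes}: $(i,j)\in D'_+$ is the leftmost unpaired cross of row $i$ of $D'$, $m\ge1$ is the associated integer, $(i,j-k),(i+1,j-k)\in D'_+$ for $1\le k<m$ while $(i,j-m),(i+1,j-m)\notin D'_+$, and $D_+=(D'_+\setminus\{(i,j)\})\cup\{(i+1,j-m)\}$. I would prove that the newly created cross $(i+1,j-m)$ is unpaired in $D$ after running the pairing process on row $i$ of $D$, by running the pairing processes on row $i$ of $D'$ and of $D$ in parallel. The ingredients are: (a) since $(i,j)$ is the \emph{leftmost} unpaired cross, every cross of $D'$ in row $i$ in a column $<j$ is paired, and a short analysis --- using that the boxes in columns $j-m+1,\dots,j-1$ self-pair between rows $i$ and $i+1$, that $(i,j-m)$ and $(i+1,j-m)$ are elbows, and that $(i+1,j)$ must be an elbow (else $(i,j)$ pairs with it) --- shows each cross of row $i$ of $D'$ in a column $\le j-m$ is paired with a row-$(i+1)$ cross in a column $\le j-m-1$; (b) passing from $D'$ to $D$ deletes the unpaired row-$i$ cross $(i,j)$ and inserts the row-$(i+1)$ cross $(i+1,j-m)$ to its left, and since the column $j-m$ is larger than the column of the partner of every row-$i$ cross that could reach it, the resulting matching agrees with that of $D'$ on all columns $<j-m$, with $(i+1,j-m)$ never selected. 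Hence $(i+1,j-m)$ is unpaired in $D$.

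For the ``if'' direction, assume that after running the pairing process on row $i$ of $D$ there is an unpaired cross in row $i+1$, and let $(i+1,j')$ be the \emph{rightmost} one. First, $(i,j')\notin D_+$: if it were a cross, then when the process reaches $c=(i,j')$ the still-unpaired $(i+1,j')$ --- having column $j'$ --- would be the leftmost unpaired row-$(i+1)$ cross in a column $\ge j'$, hence would be selected, contradicting that $(i+1,j')$ is unpaired. Scanning columns $j'+1,j'+2,\dots$, I claim one meets only ``both crosses'' columns until the first ``both elbows'' column: a column of type (elbow in row $i$, cross in row $i+1$) would, by the same selection argument, make its row-$(i+1)$ cross unpaired too, contradicting the maximality of $j'$; and a column of type (cross in row $i$, elbow in row $i+1$) following a (possibly empty) run of ``both crosses'' columns would force the pipe entering $(i+1,j')$ from the west and the pipe entering it from the south to recross, contradicting reducedness. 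Since crosses occur only where $i+j\le n$, a ``both elbows'' column is reached by column $n-i+1$; let $m\ge1$ be the first one, so $(i,j'+k),(i+1,j'+k)\in D_+$ for $1\le k<m$ and $(i,j'+m),(i+1,j'+m)\notin D_+$. Set $D'_+=(D_+\setminus\{(i+1,j')\})\cup\{(i,j'+m)\}$, which genuinely adds a cross as $(i,j'+m)\notin D_+$. As in the proof of Proposition~\ref{prop:f_iwelldef}, $D'$ is a pipe dream for $w$ with $|D'_+|=\ell(w)$, so $D'\in RP(w)$; and the same bookkeeping as in the ``only if'' direction shows the pairing process on row $i$ of $D'$ has $(i,j'+m)$ as its unique (hence leftmost) unpaired cross, with associated integer exactly $m$, so that $f_i(D')=D$.

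The step I expect to be the main obstacle is the bookkeeping in (a)--(b) and its mirror image in the reverse direction: the pairing process is a greedy matching that reads all of rows $i$ and $i+1$, so a priori deleting one cross from row $i$ and inserting one into row $i+1$ a few columns to the left could cascade and globally reshuffle the matching. The heart of the argument is therefore a ``no cascade'' claim --- that this particular surgery alters the matching only in the evident local way --- together with the ``longer'' double-crossing argument that rules out the (cross in row $i$, elbow in row $i+1$) column in the rightward scan. The remaining verifications, that $f_i(D')$ is reduced, represents $w$, and is not $0$, are routine and patterned on the proof of Proposition~\ref{prop:f_iwelldef}.
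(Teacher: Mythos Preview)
Your proposal is correct and follows essentially the same approach as the paper's proof. Both directions proceed by locating the extremal unpaired cross (leftmost in row $i$ for the forward direction, rightmost in row $i+1$ for the reverse), using reducedness to force the ``rectangle of crosses bordered by elbows'' local picture, and then checking that the obvious swap inverts $f_i$.

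The only organizational difference is that in the reverse direction the paper scans rightward along row $i+1$ alone to find the first elbow and then argues separately that row $i$ must be filled with crosses over that range and capped by an elbow, whereas you scan both rows simultaneously and classify columns into four types. Your case analysis is equivalent: your (elbow, cross) case is exactly the paper's ``otherwise $(i+1,k)$ would be an unpaired cross farther right'' step, and your (cross, elbow) case with the longer double-crossing argument is exactly the paper's ``if $(i,n)$ were a cross then the same pair of pipes crosses twice'' step (the paper's Figure~\ref{fig:nonreduced} is drawn for the $2\times 2$ case but is invoked for the general rectangle). Your discussion of the ``no cascade'' bookkeeping for the pairing is more explicit than the paper's, which compresses it to the single phrase ``because $(i+1,\ell)$ was the rightmost unpaired cross in row $i+1$ of $D$''; your unpacking of this into claims (a) and (b) is a faithful expansion of what the paper leaves implicit.
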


\begin{proof}
    Let $w \in S_n$, and run the pairing process on row $i$ of $D \in RP(w)$ for $1 \leq i < n$.
    
    First suppose that there exists an unpaired cross in row $i+1$, say $(i+1, \ell) \in D_+$. From there, scan right across row $i+1$ to find the first elbow tile, say $(i+1,n) \notin D_+$. Note that a minimal such $n \in \N$ exists since all tiles on the main anti-diagonal are elbows. By construction, we then have $(i+1,k) \in D_+$ for all $\ell \leq k < n$. Note that $(i, k) \in D_+$ for all $\ell \leq k < n$ as well, since otherwise one of $(i+1,k) \in D_+$ would be an unpaired cross in row $i+1$ farther to the right of $(i+1,\ell)$. If $(i,n)$ were a cross, then the same pair of pipes crosses at both $(i+1,\ell)$ and $(i,n)$, contradicting $D \in RP(w)$; see Figure \ref{fig:nonreduced}. Therefore, $(i,n) \notin D_+$ is an elbow tile. Finally, $(i,\ell) \notin D_+$ since otherwise the cross at $(i+1,\ell)$ would be paired. In rows $i$ and $i+1$, we thus have a (possibly empty) rectangle of crosses between columns $\ell$ and $n$, whose corners are three elbow tiles and one cross in position $(i+1,\ell)$.

    We claim that $D = f_i(D')$, where $D_+' = D_+ \backslash \left\{ (i+1,\ell) \right\} \cup \{(i, n)\}$. By construction, all pipes still enter and exit from the same rows and columns in both $D$ and $D'$. As the number of crosses in $D'$ is the same as in $D$, then $D' \in RP(w)$. Now run the pairing process on row $i$ of $D'$. Because $(i+1,\ell)$ was the rightmost unpaired cross in row $i+1$ of $D$, then the cross $(i,n) \in D'_+$ is the leftmost unpaired cross in row $i$ of $D'$. Further note that $(i,\ell) \notin D'_+$ where $\ell < n$, and so $f_i(D') \neq 0$. By the argument in the previous paragraph, we see that $m=n-\ell$ satisfies properties (a) and (b) of Definition \ref{def:crystalchutes} when $j=n$.  Therefore, $f_i(D') = D'_+ \backslash \{ (i,n)\} \cup \{ (i+1, \ell)\} = D$, and so $D$ is indeed the result of a crystal chute move.

    Conversely, suppose that $D = f_i(D') \neq 0$ for some $D' \in RP(w)$. After running the pairing process on row $i$ of $D'$, let $(i,j)\in D'_+$ denote the leftmost unpaired cross in row $i$ of $D'$, so that $D_+ = D'_+ \backslash \{(i,j)\} \cup \{ (i+1, j-m)\}$ for $m \in \N$ satisfying (a) and (b) of Definition \ref{def:crystalchutes}.  Note that the cross $(i+1, j-m) \in D_+$ is the rightmost unpaired cross in row $i+1$ after running the pairing process on row $i$ of $D$, since $(i,j)$ was the leftmost unpaired cross in row $i$ of $D'$, and the tiles in columns strictly between form a rectangle of crosses by property (b) of $f_i(D')$. Therefore, $D$ has an unpaired cross in row $i+1$, completing the proof. 
\end{proof}

We now define a second family of operators $e_i$ to be precisely the inverse crystal chute moves characterized by Lemma \ref{lem:inversechutes}.

\begin{definition}\label{def:inversechutes}
Let $D \in RP(w)$ for $w \in S_n$. Fix an $1 \leq i < n$ and run the pairing process on row $i$ of $D$. If all crosses in row $i+1$ are paired, then set $e_i(D)=0$. Otherwise, denote by $(i+1,\ell) \in D_+$ the rightmost unpaired cross in row $i+1$.

Let $q>\ell$ be minimal such that $(i+1,q) \notin D_+$. Define a new diagram $e_i(D)$ by
   \begin{equation*}
             e_i(D)_+ = D_+ \backslash \left\{ (i+1,\ell) \right\} \cup \{(i, q)\}.
   \end{equation*}
   The family of operators $e_i$ for $1 \leq i < n$ are called \emph{(raising) crystal chute moves}.
\end{definition}

The well-definedness of the operators $e_i$ was the primary purpose of Lemma \ref{lem:inversechutes}, but we record the following explicit statement for future use.

\begin{proposition}\label{prop:e_iwelldef}
    The raising crystal chute move $e_i: RP(w) \to RP(w) \cup \{0\}$ is a well-defined map for all $1 \leq i < n$, satisfying $\wt(e_i(D)) = \wt(D) + \alpha_i$ for any $D \in RP(w)$. Moreover, the raising and lowering crystal chute moves are mutually inverse, meaning $e_i(D) = D'$ if and only if $f_i(D') = D$ for $D, D' \in RP(w)$.
\end{proposition}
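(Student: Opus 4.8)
The plan is to derive Proposition \ref{prop:e_iwelldef} almost entirely as a formal consequence of Lemma \ref{lem:inversechutes} together with Proposition \ref{prop:f_iwelldef}, rather than redoing the local diagram analysis from scratch. First I would establish the weight statement. By Definition \ref{def:inversechutes}, when $e_i(D) \neq 0$ the operator removes the cross at $(i+1,\ell)$ from row $i+1$ and adds a cross at $(i,n)$ in row $i$, so exactly one cross leaves row $i+1$ and one enters row $i$; since $n > \ell$ no crosses move to or from any other row. Thus $\wt(e_i(D)) = \wt(D) + e_i - e_{i+1} = \wt(D) + \alpha_i$, mirroring the end of the proof of Proposition \ref{prop:f_iwelldef}. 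I would also note here that $e_i(D) \in RP(w)$ when nonzero: this is exactly what the first half of the proof of Lemma \ref{lem:inversechutes} shows, since the diagram $D'$ constructed there with $D'_+ = \{D_+ \setminus (i+1,\ell)\} \cup \{(i,n)\}$ is precisely $e_i(D)$, and that argument verifies $D' \in RP(w)$.

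Next I would prove well-definedness of $e_i$ as a map, i.e. that the output $e_i(D)$ is unambiguously specified. The only potential ambiguity is the choice of $\ell$ and $n$: the rightmost unpaired cross $(i+1,\ell)$ in row $i+1$ is unique by definition of "rightmost," and $n$ is defined to be minimal with $(i+1,n)\notin D_+$, hence also unique; a minimal such $n$ exists because every tile on the main anti-diagonal is an elbow, so there is at least one elbow to the right of $(i+1,\ell)$. This is the same existence observation already used inside the proof of Lemma \ref{lem:inversechutes}. Combined with the previous paragraph, this shows $e_i \colon RP(w) \to RP(w) \cup \{0\}$ is a well-defined map satisfying the weight identity.

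Finally, for the mutual-inverse claim I would argue both directions using Lemma \ref{lem:inversechutes}. Suppose $e_i(D) = D' \neq 0$. Then $D'_+ = \{D_+ \setminus (i+1,\ell)\} \cup \{(i,n)\}$, which is exactly the diagram denoted $f_i(D')$... — more carefully, the first half of the proof of Lemma \ref{lem:inversechutes} shows precisely that this $D'$ satisfies $D = f_i(D')$, so $f_i(e_i(D)) = D$. Conversely, suppose $f_i(D') = D \neq 0$. The converse half of the proof of Lemma \ref{lem:inversechutes} shows that $D$ then has an unpaired cross in row $i+1$ after running the pairing process on row $i$, and identifies its rightmost such cross as $(i+1, j-m)$, with the first elbow to its right occurring at column $j$; applying Definition \ref{def:inversechutes} with $\ell = j-m$ and $n = j$ therefore gives $e_i(D)_+ = \{D_+ \setminus (i+1,j-m)\} \cup \{(i,j)\} = D'_+$, i.e. $e_i(f_i(D')) = D'$. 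It remains only to check the boundary cases that $e_i(D) = 0$ iff $D$ is not in the image of $f_i$ (immediate from Lemma \ref{lem:inversechutes}, since $e_i(D)=0$ exactly when row $i+1$ has no unpaired cross), and symmetrically that $f_i(D')=0$ iff $D'$ is not in the image of $e_i$; this last point follows by combining Lemma \ref{lem:inversechutes} applied to candidate preimages with the two computations just made. I do not expect any serious obstacle here: essentially all the geometric content was front-loaded into Lemma \ref{lem:inversechutes}, and the main thing to be careful about is bookkeeping of the four cases ($e_i$ or $f_i$, zero or nonzero) and matching the indices $(\ell, n) \leftrightarrow (j-m, j)$ consistently.
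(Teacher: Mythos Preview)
Your proposal is correct and follows essentially the same approach as the paper: both derive well-definedness, the weight formula, and the mutual-inverse relation as formal consequences of Lemma~\ref{lem:inversechutes} together with Proposition~\ref{prop:f_iwelldef}, with no new diagram analysis. The only cosmetic difference is that you compute $\wt(e_i(D)) = \wt(D) + \alpha_i$ directly from Definition~\ref{def:inversechutes}, whereas the paper obtains it by invoking $e_i = f_i^{-1}$ and the weight shift for $f_i$.
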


\begin{proof}
    In case there are no unpaired crosses in row $i+1$ after running the pairing process on row $i$ of $D$, then $e_i(D) = 0$. 
    Otherwise, Lemma \ref{lem:inversechutes} says that $D$ is the image of the crystal chute move $f_i$. Moreover, the proof of Lemma \ref{lem:inversechutes} shows that the preimage $f_i^{-1}(D)$ is precisely the pipe dream $e_i(D)_+ = D_+ \backslash \left\{ (i+1,\ell) \right\} \cup \{(i, q)\},$ and so $e_i(D) \in RP(w) \cup \{0\}$ is well-defined.  
    Since $\wt(f_i(D)) = \wt(D)-\alpha_i$ by Proposition \ref{prop:f_iwelldef} and $e_i(D) = f_i^{-1}(D)$ by Lemma \ref{lem:inversechutes}, then $\wt(e_i(D)) = \wt(D)+\alpha_i$. The inverse relationship of $e_i$ and $f_i$ follows directly from Lemma \ref{lem:inversechutes} and Definition \ref{def:inversechutes}.
\end{proof}

\begin{remark}\label{rem:lenart}
Our raising and lowering crystal chute moves coincide with the operators on rc-graphs denoted by $\tilde{e}_r, \tilde{f}_r$ in \cite{lenart_unified_2004}, though the authors discovered them independently via the explicit crystal structure on reduced factorizations with cutoff in \cite{assaf_demazure_2018}. Compare our treatment to \cite[Section 3]{lenart_unified_2004}, in which the properties of $\tilde{e}_r, \tilde{f}_r$ are developed using the corresponding operators on biwords via the plactification map.
\end{remark}

The pipe dreams on which $e_i = 0$ for all $1 \leq i < n$ play a distinguished role in the statement of Theorem \ref{thm:main} below, so we highlight them here.

\begin{definition}
     If $e_i(D)=0$ for all $1 \leq i <n$, then we refer to $D$ as a \emph{highest weight pipe dream}. If $f_i(D)=0$ for all $1 \leq i <n$, then we refer to $D$ as a \emph{lowest weight pipe dream}. 
\end{definition}

We now record several lemmas concerning highest weight pipe dreams, for future use. 

\begin{lemma}\label{lem:HWPD}
    If $D \in RP(w)$ is a highest weight pipe dream, then $\wt(D)$ is a partition.
\end{lemma}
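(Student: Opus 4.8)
The plan is to prove the contrapositive-style statement directly: assume $D\in RP(w)$ satisfies $e_i(D)=0$ for all $1\le i<n$, and show that $\wt(D)=(\wt(D)_1,\dots,\wt(D)_n)$ is weakly decreasing. By Definition \ref{def:inversechutes} and Lemma \ref{lem:inversechutes}, the condition $e_i(D)=0$ is equivalent to saying that after running the pairing process on row $i$ of $D$, \emph{every} cross in row $i+1$ is paired. Since every cross in row $i+1$ is paired to a distinct cross in row $i$ (the pairing in Definition \ref{def:pairing} is injective: each $c_+$ is matched to exactly one $c$, and step (2) walks through the crosses of row $i$ from right to left, each being "used" for at most one pairing), this immediately forces the number of crosses in row $i+1$ to be at most the number of crosses in row $i$, i.e.\ $\wt(D)_{i+1}\le \wt(D)_i$.

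So the key step is to verify carefully that the pairing process, restricted to the crosses that actually get paired, defines an injection from the set of paired crosses in row $i+1$ into the crosses of row $i$. First I would unwind Definition \ref{def:pairing}: the process iterates a pointer $c$ over the crosses of row $i$ from right to left (via step (2)), and at each stage either assigns to the current $c$ a leftmost available partner $c_+$ in row $i+1$ that lies weakly right of $c$, or declares $c$ unpaired. Because each $c_+$ is removed from the pool of "unpaired" crosses in row $i+1$ once it is chosen, and because step (2) advances $c$ to a strictly-left position each time, no cross of row $i$ is ever the partner of two different crosses of row $i+1$, and no cross of row $i+1$ is ever chosen twice. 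Hence $c\mapsto c_+$ is a partial injection from (a subset of) row $i$ crosses to row $i+1$ crosses, and its inverse is a partial injection the other way. When $e_i(D)=0$, this inverse is defined on \emph{all} of row $i+1$'s crosses, giving $\wt(D)_{i+1}\le\wt(D)_i$.

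Running this over all $1\le i<n$ yields $\wt(D)_1\ge \wt(D)_2\ge\cdots\ge\wt(D)_n$, which together with $\wt(D)\in\Z_{\ge0}^n$ (each coordinate counts crosses in a row) says precisely that $\wt(D)$ is a partition. I expect the main obstacle to be nailing down the injectivity claim rigorously from the somewhat procedural phrasing of Definition \ref{def:pairing}; in particular one must be careful that the word "unpaired" in step (1) refers to crosses in row $i+1$ not yet chosen as some partner earlier in the process, and confirm that the leftmost-choice rule together with the right-to-left scan of row $i$ cannot cause a collision. A clean way to package this is to describe the pairing as the greedy bracket matching of an appropriate sequence of opening/closing symbols read along rows $i$ and $i+1$, at which point injectivity is standard; but even the direct bookkeeping argument above suffices, and no deeper structural input about reducedness of $D$ is needed for this particular lemma.
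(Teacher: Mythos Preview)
Your argument is correct and follows essentially the same approach as the paper: both use that $e_i(D)=0$ forces every cross in row $i+1$ to be paired with a distinct cross in row $i$, whence $\wt(D)_{i+1}\le\wt(D)_i$. The paper's proof is simply a terser version that asserts the pigeonhole step (``if row $i+1$ has more crosses than row $i$, some cross in row $i+1$ must be unpaired'') without unpacking the injectivity of the pairing as carefully as you do.
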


\begin{proof}
    Recall from Definition \ref{def:inversechutes} that $e_i(D) = 0$ if and only if all crosses in row $i+1$ are paired after running the pairing process on row $i$ of $D$.  If row $i+1$ contains strictly more crosses than row $i$, then there will be at least one unpaired cross in row $i+1$.  Therefore, the number of crosses must weakly decrease, enumerating rows from top to bottom; equivalently, $\wt(D)$ is a partition.
\end{proof}

\begin{lemma}\label{lem:HWPDincseq}
    Let $D \in RP(w)$ be a highest weight pipe dream. Let $c = (i,j_\ell^i) \in D_+$ denote the position of the $\ell^{\text{th}}$ cross from the left in row $i$ of $D$. Then for any fixed $1 \leq \ell \leq \wt(D)_1$, the column sequence $(j_\ell^i)$ is weakly increasing.   
\end{lemma}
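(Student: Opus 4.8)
The plan is to prove the claim by inducting on the row index $i$, keeping track of how the pairing process forces the columns of successive crosses to line up. The key structural fact I want to extract is the following: after running the pairing process on row $i$ of a highest weight pipe dream $D$, \emph{every} cross in row $i+1$ is paired (this is exactly the hypothesis $e_i(D)=0$, via Definition \ref{def:inversechutes}), and moreover the pairing is realized by a weakly column-increasing injection from row $i+1$ into row $i$. More precisely, if $c_+ = (i+1, j')$ is paired with $c = (i, j)$ in step (1) of Definition \ref{def:pairing}, then by construction $j' \geq j$; and I would show that the pairing, read from left to right, matches the $\ell^{\text{th}}$ cross of row $i+1$ with a cross of row $i$ whose position is at least the $\ell^{\text{th}}$ from the left. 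This is the heart of the argument.

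First I would establish that the greedy leftmost-pairing procedure of Definition \ref{def:pairing}, when every cross of row $i+1$ ends up paired, produces a matching that is \emph{order-preserving}: if $c_+^{(1)}$ is the $p$-th cross from the left in row $i+1$ and $c_+^{(2)}$ is the $q$-th with $p < q$, and they are paired with crosses $c^{(1)} = (i, j^{(1)})$ and $c^{(2)} = (i, j^{(2)})$ in row $i$, then $j^{(1)} < j^{(2)}$. This follows because the pairing process scans crosses of row $i$ from right to left (step (2) always resets $c$ to the next cross to the left), and at each stage picks the leftmost available unpaired partner in row $i+1$ weakly to its right; a standard greedy-matching exchange argument shows the resulting matching respects the left-to-right order on both rows. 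In particular, the $\ell^{\text{th}}$ cross from the left in row $i+1$ is paired with a cross in row $i$ that is \emph{at least} the $\ell^{\text{th}}$ from the left, hence $j_\ell^i \leq j_\ell^{i+1}$ for every $\ell$ with $1 \leq \ell \leq \wt(D)_{i+1}$. Since Lemma \ref{lem:HWPD} guarantees $\wt(D)_{i+1} \leq \wt(D)_i \leq \cdots \leq \wt(D)_1$, the inequality $j_\ell^i \leq j_\ell^{i+1}$ makes sense precisely for $1 \leq \ell \leq \wt(D)_1$ whenever row $i+1$ has an $\ell^{\text{th}}$ cross, and chaining these inequalities over all $i$ gives that $(j_\ell^i)_i$ is weakly increasing as claimed.

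I expect the main obstacle to be pinning down the order-preserving property of the greedy matching rigorously, since the pairing process is defined by a somewhat intricate right-to-left sweep in which a cross of row $i$ may be skipped over (left unpaired) before a later cross to its left gets paired. The subtlety is ruling out a ``crossing'' in the matching: suppose for contradiction that $c^{(1)} = (i,j^{(1)})$ with $j^{(1)} > j^{(2)}$ is paired with the $p$-th cross $c_+^{(1)} = (i+1, j_+^{(1)})$ while $c^{(2)} = (i,j^{(2)})$ is paired with the $q$-th cross $c_+^{(2)} = (i+1, j_+^{(2)})$, $p < q$, so $j_+^{(1)} < j_+^{(2)}$. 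Because the sweep processes $c^{(1)}$ before $c^{(2)}$ (as $j^{(1)} > j^{(2)}$) and $c^{(1)}$ chose the \emph{leftmost} unpaired partner weakly to its right, at that moment $c_+^{(2)}$ was not yet available weakly to the right of $c^{(1)}$ — but $j_+^{(2)} > j_+^{(1)} \geq j^{(1)}$ shows $c_+^{(2)}$ \emph{is} weakly right of $c^{(1)}$ and was unpaired (it gets paired later, at step $q$), contradicting the leftmost choice unless $c_+^{(1)}$ itself lay strictly left of $c_+^{(2)}$ and was the leftmost option, which forces $p<q$ consistently rather than a crossing. Making this exchange argument airtight — carefully tracking which crosses are "already paired" at each point of the sweep and invoking $e_i(D)=0$ to guarantee no cross of row $i+1$ is orphaned — is the one step I would write out in full detail; the rest is bookkeeping with the weak-decrease of $\wt(D)$ from Lemma \ref{lem:HWPD}.
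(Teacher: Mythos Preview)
Your order-preservation claim is false, and this is a genuine gap. Consider the highest weight pipe dream $D$ with $D_+ = \{(1,1),(1,2),(2,2),(2,3)\}$ (one checks directly that $D \in RP([31524])$ and that $e_i(D)=0$ for all $i$). Running the pairing process on row $1$: the rightmost cross $(1,2)$ pairs with $(2,2)$, and then $(1,1)$ pairs with the only remaining option $(2,3)$. Thus the first cross in row $2$ (column $2$) pairs with the \emph{second} cross in row $1$, while the second cross in row $2$ (column $3$) pairs with the \emph{first} cross in row $1$ --- a crossing. In particular, your assertion that ``the $\ell^{\text{th}}$ cross from the left in row $i+1$ is paired with a cross in row $i$ that is at least the $\ell^{\text{th}}$ from the left'' fails already for $\ell=2$ here. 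Your own exchange argument in fact unravels at the ``unless'' clause: the situation you describe there is exactly what happens, and it is \emph{not} a contradiction.

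The fix is to drop the matching-structure analysis entirely and just count, which is what the paper does. Suppose $j_\ell^i > j_\ell^{i+1}$. Each of the $\ell$ crosses $(i+1,j_1^{i+1}),\dots,(i+1,j_\ell^{i+1})$, if paired, is paired with a cross in row $i$ lying weakly to its left, hence in a column $\le j_\ell^{i+1} < j_\ell^i$. But row $i$ has only $\ell-1$ crosses in columns $< j_\ell^i$, and the pairing is injective, so at least one of these $\ell$ crosses in row $i+1$ is unpaired --- contradicting $e_i(D)=0$. This one-line pigeonhole argument replaces your entire step and requires no control on how the greedy matching is structured.
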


\begin{proof}
    Let $D$ be a highest weight pipe dream. We must show that the column indices of the $\ell^{\text{th}}$ entries in each row of $D$ are weakly increasing. Suppose by contradiction that the $\ell^{\text{th}}$ crosses in rows $i$ and $i+1$ occur in columns $j^i_\ell >j^{i+1}_\ell$.  Since crosses in row $i$ are paired with crosses in row $i+1$ which lie weakly to the right, then the cross in column $j^{i+1}_\ell$ must be unpaired when running the pairing process on row $i$, contradicting the fact that $D$ is highest weight via Lemma \ref{lem:inversechutes}. Therefore, $j^i_\ell \leq j^{i+1}_\ell$ for any $1 \leq \ell \leq \wt(D)_1$ and all $1 \leq i < n$.
\end{proof}

\subsection{Demazure crystals and the main theorem}\label{sec:mainthm}

Given a partition $\lambda$ with $n$ parts, the type $A_{n-1}$ crystal of highest weight $\lambda$ is denoted by $B(\lambda)$, and the character of the crystal $B(\lambda)$ is the Schur polynomial $s_\lambda(x_1, \dots, x_n)$. We refer the reader to \cite{bump_crystal_2017} for more background on crystals. 

Demazure crystals are subsets of $B(\lambda)$ truncated by a permutation which restricts the set of raising and lowering operators. More precisely, for any subset $X \subseteq B(\lambda)$ and any index $1 \leq i < n$, we define $\mathfrak{D}_i$ in terms of lowering operators as
\begin{equation*}
\mathfrak{D}_i (X) = \{ b \in B(\lambda) \mid b \in f^k_i(X)\; \text{for some}\; k \geq 0\}.
\end{equation*}
Now given any $\pi \in S_n$, write $\pi = s_{i_1} \cdots s_{i_p}$ as a product of simple transpositions $s_i = (i, i+1)$ where the expression for $\pi$ is reduced, meaning that $p= \ell(\pi)$ is minimal. If $u_\lambda$ denotes the highest weight element of $B(\lambda)$, the \emph{Demazure crystal} associated to the pair $(\lambda, \pi)$ is defined by
\begin{equation*}
    B_\pi(\lambda) = \mathfrak{D}_{i_1} \cdots \mathfrak{D}_{i_p}(u_\lambda).
\end{equation*}
The character of the Demazure crystal $B_\pi(\lambda)$ generalizes the Demazure characters of \cite{Demazure-characters}, as conjectured by Littelmann \cite{Littelmann} and proved by Kashiwara \cite{Kashiwara}. Moreover, the character of the Demazure crystal $B_\pi(\lambda)$ is the key polynomial $\kappa_a(x_1, \dots, x_n)$ indexed by the composition $a = \pi(\lambda)$. The Schur polynomials occur as the special case where $\pi = w_0$ is the longest permutation $w_0 = [n\; n-1 \cdots 2\; 1] \in S_n$, so that $\pi$ places no restrictions on the lowering operators defining $B_{w_0}(\lambda) = B(\lambda)$.

Our main theorem says that the set of reduced pipe dreams for a given permutation admits a Demazure crystal structure determined by the crystal chute moves from Section \ref{sec:crystalchutes}.

\begin{theorem}\label{thm:main} 
Given any $w \in S_n$, the operators $e_i$ and $f_i$ for $1 \leq i < n$ define a type $A_{n-1}$ Demazure crystal structure on $RP(w)$. That is, 
\begin{equation*}
    RP(w) \cong \bigcup\limits_{\substack{D \in RP(w) \\ e_i(D) = 0,\; \forall 1 \leq i < n}} B_{\pi_D}(\wt(D)),
\end{equation*}
where the truncating permutation $\pi_D$ is the shortest permutation such that $\operatorname{wt}(\widetilde{D}) = \pi_D(\wt(D))$, for a diagram $\widetilde{D}$ uniquely determined by the highest weight pipe dream $D$; see Theorem~\ref{thm:AlgCor}.
\end{theorem}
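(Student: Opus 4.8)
The strategy is to reduce Theorem~\ref{thm:main} to the known Demazure crystal structure on reduced factorizations with cutoff from \cite[Theorem 5.11]{assaf_demazure_2018} (stated here as Theorem~\ref{thm:main-rfc}), by constructing an explicit weight-preserving bijection between $RP(w)$ and the corresponding set of reduced factorizations with cutoff, and then verifying that this bijection intertwines the crystal chute moves $e_i, f_i$ with the crystal operators on reduced factorizations. Concretely, I would first build the bijection in two stages: from reduced pipe dreams to reduced-word compatible sequences (the rc-graphs of \cite{billey_combinatorial_1993}), as reviewed in Section~\ref{sec:rcgraphs}, and then from compatible sequences to reduced factorizations with cutoff via the map described in Section~\ref{sec:rfcs}. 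The composite is the bijection of Proposition~\ref{prop:wtBijPD}, and the point is that it is weight-preserving: $\wt(D)$ on the pipe dream side matches the weight (exponent vector of the associated monomial) on the RFC side, since both just record, coordinate by coordinate, how many crosses/factors live in each row/block.

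\textbf{Key steps, in order.} (1) Establish that Proposition~\ref{prop:f_iwelldef} and Proposition~\ref{prop:e_iwelldef} already give us well-defined mutually-inverse operators $e_i, f_i$ on $RP(w)$ with the correct weight shift $\wt(f_i(D)) = \wt(D) - \alpha_i$, so the only thing left is to check the crystal \emph{axioms} and the Demazure truncation. (2) Prove the intertwining: for the bijection $\Phi \colon RP(w) \to \mathrm{RFC}(w)$ of Proposition~\ref{prop:wtBijPD}, show $\Phi(f_i(D)) = f_i(\Phi(D))$ and $\Phi(e_i(D)) = e_i(\Phi(D))$, where the right-hand operators are the Assaf--Schilling crystal operators. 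This requires matching the pairing process on row $i$ of a pipe dream with the bracketing/signature rule that defines the RFC crystal operators; the leftmost unpaired cross in row $i$ should correspond exactly to the signed letter the RFC operator $f_i$ acts on, and the ``rectangle of crosses'' that a crystal chute move slides over should correspond to the combinatorial adjustment made on the RFC side. (3) Transport the crystal structure: since $\Phi$ is an equivariant bijection and $\mathrm{RFC}(w) \cong \bigcup B_{\pi}(\lambda)$ is a disjoint union of Demazure crystals by Theorem~\ref{thm:main-rfc}, conclude $RP(w)$ carries the same decomposition, with highest weight pipe dreams (those with $e_i(D)=0$ for all $i$, which by Lemma~\ref{lem:HWPD} have partition weight) corresponding to the highest weight elements indexing each Demazure component. (4) Identify the truncating permutation: for a highest weight pipe dream $D$, the Demazure component through $\Phi(D)$ is $B_{\pi}(\lambda)$ for some $\pi$; show this $\pi$ equals the permutation $\pi_D$ extracted from the algorithm of Section~\ref{sec:permutation}, i.e. the shortest $\pi$ with $\wt(\widetilde D) = \pi(\wt(D))$ for the diagram $\widetilde D$ of Theorem~\ref{thm:AlgCor}. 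This last identification is where Lemma~\ref{lem:HWPDincseq} (weak increase of the $\ell$th-column sequences in a highest weight pipe dream) feeds in, as it constrains the shape of $\widetilde D$ and hence pins down $\pi_D$.

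\textbf{Main obstacle.} I expect the crux to be step (2): establishing that the bijection $\Phi$ is genuinely equivariant. The pairing process of Definition~\ref{def:pairing} is a bespoke, geometrically-flavored rule on two adjacent rows of a pipe dream, whereas the RFC crystal operators are defined by a (possibly iterated) bracketing of letters across the factorization; aligning these requires carefully tracking how a single row of crosses, read off as part of a compatible sequence, translates into a block of a reduced factorization, and verifying that ``unpaired cross in row $i$'' is literally the same condition as ``unbracketed $i$ in the signature word.'' The subtlety is that the crystal chute move moves a cross by $m$ columns while preserving an intervening rectangle of crosses — one must check this corresponds precisely to the RFC operator's action on the selected letter together with the forced rearrangement of the surrounding letters to keep the factorization reduced and the cutoff condition intact. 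Once this local correspondence is nailed down, the crystal axioms and the Demazure decomposition are inherited for free from \cite{assaf_demazure_2018}, and steps (3)--(4) are essentially bookkeeping, deferred to Sections~\ref{sec:intertwine} and~\ref{sec:permutation} respectively.
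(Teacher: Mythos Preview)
Your proposal is correct and matches the paper's approach essentially step for step: build the weight-preserving bijection of Proposition~\ref{prop:wtBijPD} (as a composite through compatible sequences), prove it intertwines the pairing processes and hence the operators $e_i, f_i$ (this is exactly Lemma~\ref{lem:wtPair}, Proposition~\ref{prop:intertwine}, and Corollary~\ref{cor:intertwine-e}), then transport the Demazure structure from Theorem~\ref{thm:main-rfc} and defer the identification of $\pi_D$ to Theorem~\ref{thm:AlgCor}. One small correction: the bijection lands in $\mathrm{RFC}(w^{-1})$, not $\mathrm{RFC}(w)$, since reversing the reading order to produce increasing blocks reverses the reduced word.
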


\noindent Theorem \ref{thm:main} is the pipe dream analog of \cite[Theorem 5.11]{assaf_demazure_2018}, phrased there in terms of reduced factorizations for $w$ meeting a certain cutoff condition. 

\begin{remark}
   We emphasize that the Demazure crystal structure on the set $RP(w)$ is truly innate. That is, the full crystal structure on $RP(w)$ can be obtained by simply applying all possible crystal chute moves to all elements of $RP(w)$, without any knowledge of either highest weight pipe dreams or the truncating permutation. However, in Section \ref{sec:permutation}, we offer a convenient algorithm for recording the truncating permutation $\pi_D$ directly from a highest weight pipe dream $D$, giving an alternative way to decompose $RP(w)$ into the same union of Demazure crystals.
\end{remark}

We conclude the section by illustrating Theorem \ref{thm:main} in an example.

\begin{example}\label{ex:mainthm}
Figure \ref{fig:main-ex} shows how the set $RP\left([21543]\right)$ decomposes as the disjoint union of three Demazure crystals, indexed by the three highest weight pipe dreams depicted with no incoming $f_i$ edges, having weights $\lambda_D \in \{(2,1,1,0), (2,2,0,0), (3,1,0,0)\}$. The three respective truncating permutations are $\pi_D \in \{ s_2s_1s_3,s_2,s_3s_2\}$, which can be read from the edges in Figure \ref{fig:main-ex} by recording the corresponding simple reflections from right to left. See Theorem \ref{thm:AlgCor} to obtain $\pi_D$ directly from each of the highest weight pipe dreams, as an alternative method.
\end{example}

The remainder of the paper is dedicated to the proof of Theorem \ref{thm:main}, from which Theorem \ref{thm:main-key} follows as an immediate corollary.

%!TEX root = main.tex

\section{Compatible Sequences and RC-Graphs}\label{sec:rcgraphs}

This section reviews the terminology of compatible sequences and rc-graphs, which are equivalent to the reduced pipe dreams explored in Section \ref{sec:schuberts}.
In Theorem \ref{thm:RCSchub}, we provide an alternate formula for Schubert polynomials due to \cite{billey_combinatorial_1993}, constructed from pairs of reduced words and compatible sequences. We then review the construction of rc-graphs from \cite{BilleyBergeron}, equivalently the planar histories of \cite{FominKirillov}, which provides a weight-preserving bijection between pipe dreams and reduced-word compatible sequences.

\subsection{Compatible sequences}

Given $w \in S_n$ and any reduced expression $w = s_{a_1}s_{a_2} \cdots s_{a_p}$, the sequence of indices $a_1 a_2 \cdots a_p \in [n-1]^{\ell(w)}$ is a \emph{reduced word} for the permutation $w$. We denote the set of all reduced words for $w \in S_n$ by $R(w)$. 

We now review the notion of a reduced-word compatible sequence from \cite{billey_combinatorial_1993}. Given any reduced word $\bfa = a_1\cdots a_p \in R(w)$, a tuple of positive integers $\beta = \beta_1 \cdots \beta_p \in \N^p$, is $\bfa$-\emph{compatible} if the following three properties hold:
    \begin{enumerate}
        \item[(i)] $\beta_1 \leq \cdots \leq \beta_p$,
        \item[(ii)] $\beta_j \leq a_j$ for all $j \in [p]$, and
        \item[(iii)] $\beta_j < \beta_{j+1}$ if $a_j < a_{j+1}$.
    \end{enumerate}
    We denote the set of $\bfa$-compatible sequences by $C(\bfa)$.

\begin{example}\label{ex:compatible}
    	All reduced words of \(w = [21543] \in S_5\) and their compatible sequences are provided in the table below. The elements $\bfa \in R(w)$ are listed along the first row, and the $\bfa$-compatible sequences appear below in corresponding columns, whenever they exist.
     \vskip 10pt
	\begin{center}
		\begin{tabular}{cccccccc}
			3431 & 4341 & 3413 & 3143 & 1343 & 4314 & 4134 & 1434 \\ \hline 
			$\emptyset$ & $\emptyset$ & $\emptyset$ & 1122 & 1233 & 1112 & 1123 & 1223 \\
			     &     &      &  1123 &      & 1113 & 1124 & 1224 \\
			     &    &      &   1133 &      & 1114 & 1134 & 1234 \\
			     &    &     &         &     &       &      & 1334
        \end{tabular}
	\end{center}
\end{example}

Let $\bfa \in R(w)$ for some $w \in S_n$.  The \emph{weight of an $\bfa$-compatible sequence} $\beta \in C(\bfa)$, denoted $\wt(\beta)$, is the weak composition of $\ell(w)$ whose $i^{\text{th}}$ coordinate equals the number of $i$'s the sequence $\beta$ contains.  For example, if $w = [21543] \in S_5$ and $\bfa = 1434$, then $\beta = 1224$ is $\bfa$-compatible and $\wt(\beta) = (1,2,0,1)$.

The following formula for Schubert polynomials as generating functions over reduced-word compatible sequences was conjectured by Stanley, originally proved by Billey, Jockush, and Stanley \cite{billey_combinatorial_1993}, and reproved by Fomin and Stanley in an elegant alternate way in \cite{fomin_schubert_1994}. 

\begin{theorem}[Theorem 1.1 \cite{billey_combinatorial_1993}] \label{thm:RCSchub}
Let $w \in S_n$. Then
\begin{equation*}
    \mathfrak{S}_w(x_1, \dots, x_n) = \sum_{\bfa \in R(w)} \sum_{\beta \in C(\bfa)} \mathbf{x}^{\wt(\beta)}.
\end{equation*}
\end{theorem}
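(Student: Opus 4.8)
The plan is to deduce Theorem~\ref{thm:RCSchub} from the already-available Theorem~\ref{thm:PDSchub} by exhibiting a weight-preserving bijection
\[
\Phi\colon RP(w) \to \{(\bfa,\beta) : \bfa \in R(w),\ \beta \in C(\bfa)\},
\]
after which the theorem is immediate; this is precisely the rc-graph correspondence of \cite{BilleyBergeron} previewed at the start of Section~\ref{sec:rcgraphs}, so the real work is to pin it down. Given $D \in RP(w)$, I would read the crosses of $D$ in a fixed order --- rows from top to bottom, and within each row from right to left --- and record, for the $k^{\text{th}}$ cross, sitting in box $(i,j)$, the letter $a_k := i+j-1 \in [n-1]$ and the row index $\beta_k := i$. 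Set $\Phi(D) = (\bfa,\beta)$ with $\bfa = a_1 \cdots a_p$ and $\beta = \beta_1 \cdots \beta_p$, where $p = |D_+| = \ell(w)$ by \cite[Lemma 1.4.5]{KnutsonMiller}.

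First I would check that $\bfa \in R(w)$: tracking the partial products $s_{a_1} \cdots s_{a_k}$ against the pipes of $D$ as crossings accumulate shows $s_{a_1} \cdots s_{a_p} = w$, and since this word has length exactly $\ell(w)$ it is reduced. Next, $\beta \in C(\bfa)$: condition~(i) holds because rows are read top to bottom; within a single row the letters $a_k$ \emph{strictly decrease} as we read right to left, while at each row transition $\beta$ strictly increases, so condition~(iii) --- which only constrains the ascents of $\bfa$ --- is automatic; and condition~(ii), $\beta_k = i \le i+j-1 = a_k$, holds because $j \ge 1$. For the inverse, given $(\bfa,\beta)$ satisfying (i)--(iii) I would place a cross in box $(\beta_k,\ a_k - \beta_k + 1)$ for each $k$: condition~(ii) guarantees the column index is $\ge 1$, and $\beta_k + (a_k - \beta_k + 1) = a_k + 1 \le n$ keeps the cross on or above the main anti-diagonal, while (i) and (iii) force the boxes to be distinct and to assemble into a legal pipe dream whose reading order recovers $(\bfa,\beta)$; the output is a pipe dream for $w$ with $\ell(w)$ crosses, hence reduced, again by \cite[Lemma 1.4.5]{KnutsonMiller}. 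Finally $\Phi$ is weight-preserving: a cross in row $i$ of $D$ is exactly an index $\beta_k$ equal to $i$, so $\wt(D) = \wt(\beta)$ coordinatewise, and summing $\mathbf{x}^{\wt(D)}$ over $RP(w)$ as in Theorem~\ref{thm:PDSchub} becomes $\sum_{\bfa \in R(w)} \sum_{\beta \in C(\bfa)} \mathbf{x}^{\wt(\beta)}$.

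The main obstacle I expect is the combinatorial dictionary buried in the previous paragraph: verifying that the three compatibility conditions (i)--(iii) correspond exactly to the local legality of a reduced pipe dream --- in particular, that in the inverse direction the configuration forbidden by a failure of (iii) (an ascent of $\bfa$ across which $\beta$ does not strictly increase) is precisely a non-reduced ``double crossing'' pattern of the sort ruled out in Figure~\ref{fig:nonreduced}, and that no such pattern can arise when (i)--(iii) all hold. This equivalence --- essentially the content of the rc-graph formalism --- is where all the checking lies; once it is in hand the theorem is a one-line consequence of Theorem~\ref{thm:PDSchub}.

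Alternatively, one can prove Theorem~\ref{thm:RCSchub} from scratch in the style of Fomin and Stanley \cite{fomin_schubert_1994}. Work in the nilCoxeter algebra $\mathcal{N}_n$ with generators $u_1, \dots, u_{n-1}$ subject to $u_i^2 = 0$ and the braid relations, set $A_i(x) = (1 + x\,u_{n-1})(1 + x\,u_{n-2}) \cdots (1 + x\,u_i)$, and let $\mathfrak{G} = A_1(x_1)\,A_2(x_2) \cdots A_{n-1}(x_{n-1})$. Expanding the product, the coefficient of the nilCoxeter element indexed by $w$ is manifestly $\sum_{\bfa \in R(w)} \sum_{\beta \in C(\bfa)} \mathbf{x}^{\wt(\beta)}$, since selecting one term from each factor records a weakly increasing sequence of block indices obeying exactly (i)--(iii), and only products of $u$'s that spell a reduced word survive. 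One then shows that the coefficients of $\mathfrak{G}$ satisfy the defining recursion of the Schubert polynomials \cite{LS-schuberts} --- that the coefficient of $w_0$ is the staircase monomial and that divided differences act correctly --- which is the hard step, and which follows from the nilCoxeter Yang--Baxter relation $(1 + x\,u_i)(1 + (x{+}y)\,u_{i+1})(1 + y\,u_i) = (1 + y\,u_{i+1})(1 + (x{+}y)\,u_i)(1 + x\,u_{i+1})$ together with far-commutativity $(1 + x\,u_i)(1 + y\,u_j) = (1 + y\,u_j)(1 + x\,u_i)$ for $|i-j| \ge 2$. Uniqueness of the family obeying the recursion then identifies these coefficients with $\mathfrak{S}_w$.
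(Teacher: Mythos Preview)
The paper does not supply its own proof of Theorem~\ref{thm:RCSchub}: it is quoted as a foundational result from \cite{billey_combinatorial_1993}, with \cite{fomin_schubert_1994} mentioned as an alternative source. The bijection you construct is exactly the map $\phi_1$ of Lemma~\ref{lem:wtBijRC}, which the paper \emph{does} prove in full detail, and your sketch tracks that proof closely --- reading crosses top-to-bottom and right-to-left within rows, checking (i)--(iii), and inverting via $(\beta_k,\, a_k - \beta_k + 1)$.

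One structural point is worth flagging. You propose to deduce Theorem~\ref{thm:RCSchub} from Theorem~\ref{thm:PDSchub} via this bijection, but in the paper both theorems are cited results, and Theorem~\ref{thm:PDSchub} (stated as \cite[Corollary 2.1.3]{KnutsonMiller}) is itself historically a restatement of Theorem~\ref{thm:RCSchub} through this very correspondence. The paper says as much: ``Theorem~\ref{thm:RCSchub} was presented in the language of pipe dreams as Theorem~\ref{thm:PDSchub}.'' So your first route is sound once Theorem~\ref{thm:PDSchub} is taken as given, but it is not an independent proof; Lemma~\ref{lem:wtBijRC} certifies the equivalence of the two formulations rather than deriving one from the other. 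Your Fomin--Stanley alternative avoids this circularity and is a genuine independent proof, exactly the one the paper cites.

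A small correction to your ``main obstacle'' paragraph: a failure of condition (iii) in the inverse direction does not produce a non-reduced double-crossing. Rather, (i) and (iii) together guarantee that the boxes $(\beta_k,\, a_k-\beta_k+1)$ are distinct and that reading them back in the prescribed order recovers $(\bfa,\beta)$; reducedness comes separately from $|\bfa| = \ell(w)$ via \cite[Lemma 1.4.5]{KnutsonMiller}, as you already note. The paper's proof of Lemma~\ref{lem:wtBijRC} keeps these two issues separate and handles the claim that the resulting pipe dream is for $w$ by an induction on $\ell(w)$ invoking \cite[Lemma 3.1]{Kogan}.
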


We now complete the Schubert polynomial calculation from Example \ref{ex:SchubPD} using Theorem \ref{thm:RCSchub}.

\begin{example}\label{ex:RCSchub}
    Each compatible sequence recorded in Example \ref{ex:compatible} corresponds to a monomial of the Schubert polynomial $\mathfrak{S}_w$ for the permutation $w = [21543] \in S_5$ by Theorem \ref{thm:RCSchub}. For example, the compatible sequences for the reduced word $\bfa = 3143$ are $\beta \in \{1122, 1123, 1133\}$ with weights $\wt(\beta) \in \{(2,2,0,0),(2,1,1,0),(2,0,2,0)\}$, which correspond to the monomials $\mathbf{x}^{\wt(\beta)} \in \{x_1^2x_2^2, x_1^2x_2x_3, x_1^2x_3^2\}$, respectively.
    Continuing in this manner, we obtain 
    \begin{align*}
    \mathfrak{S}_{[21543]} = & \ \ x_1^2x_2^2 + 2x_1^2x_2x_3 + x_1^2x_3^2+ x_1x_2x_3^2+x_1^3x_2+x_1^3x_3 +x_1^3x_4 \\ & +x_1^2x_2x_4+x_1^2x_3x_4+x_1x_2^2x_3+x_1x_2^2x_4+x_1x_2x_3x_4+x_1x_3^2x_4.
    \end{align*}
\end{example}

Theorem \ref{thm:RCSchub} was presented in the language of pipe dreams as Theorem \ref{thm:PDSchub} earlier in the paper, although the formulation in terms of compatible sequences came first chronologically. The precise connection between the two results is made using a graphical depiction of a reduced-word compatible sequence pair due to Bergeron and Billey \cite{BilleyBergeron}, which we explain in the next section.

\subsection{rc-graphs and pipe dreams}

Let $w \in S_n$, and fix $\bfa \in R(w)$ and $\beta \in C(\bfa)$. The \emph{rc-graph} for the reduced-word compatible sequence pair $(\bfa, \beta)$, denoted by $D(\bfa, \beta)$, is a diagram obtained by covering the boxes on or above the main anti-diagonal of the $n \times n$ grid with a cross in position $(\beta_i, a_i - \beta_i + 1)$ for $i \in [\ell(w)]$ and a dot in each remaining position.

    \begin{table}[ht]
        \begin{tabular}{|c|c|c|c|}
            \hline
            $i$ & $a_i$ & $\beta_i$ & $(\beta_i, a_i-\beta_i+1)$ \\ \hline \hline
            $1$ & $1$ & $1$ & $(1, 1)$ \\ \hline
            $2$ & $4$ & $2$ & $(2, 3)$ \\ \hline
            $3$ & $3$ & $2$ & $(2, 2)$ \\ \hline
            $4$ & $4$ & $4$ & $(4, 1)$ \\ \hline
        \end{tabular}
        \caption{Coordinates of crosses in the rc-graph $D(\bfa, \beta)$ for $\bfa = 1434$ and $\beta = 1224$.}
        \label{tab:rc-ex}
    \end{table}

\begin{example}\label{ex:RCGraph}
    Consider the permutation $w=[21543] \in S_5$, and the reduced word $\bfa = 1434 \in R(w)$ with $\bfa$-compatible sequence $\beta = 1224$. To construct the rc-graph for $(\bfa, \beta)$, we consider each index $1 \leq i \leq \ell(w) = 4$ and generate the list of coordinate pairs for crosses, as shown on the right in Table \ref{tab:rc-ex}.  We place crosses in these four locations and fill the remaining positions by dots to obtain the rc-graph $D(\bfa, \beta)$ depicted in Figure \ref{fig:RCEx}.
\end{example}

  \begin{figure}[ht]
        \begin{tikzpicture}[scale=0.65]
            \foreach \i in {1,2,3,4,5}{
                \node [align=center] at (0, 6-\i) {\i};
                \node [align=center] at (\i, 6) {\i};
            }
            \foreach \position in {(1, 5), (2, 6-2), (3, 6-2), (1,6-4)}{
                \node [align=center] at \position {$+$};
            }
            \foreach \position in {(1,6-2), (1,6-3), (1,6-5), (2, 6-1), (2,6-3), (2, 6-4), (3,6-1), (3,6-3), (4, 6-1), (4, 6-2), (5, 6-1)}{
                \node [align=center] at \position {$\cdot$};
            }
        \end{tikzpicture}
        \caption{The rc-graph $D(\bfa, \beta)$ for $\bfa = 1434$ and $\beta = 1224$.}
        \label{fig:RCEx}
    \end{figure}

Note in Figure \ref{fig:RCEx} that if we fill the boxes in this rc-graph with elbow tiles rather than dots, then we instead obtain a reduced pipe dream for the permutation $w = [21543]$ such that $\bfa = 1434 \in R(w).$ The construction of the rc-graph thus suggests the following bijection between the set of pipe dreams in $RP(w)$ and compatible sequences $\beta \in C(\bfa)$ for all $\bfa \in R(w)$.

\begin{lemma}\label{lem:wtBijRC}
    Given any $w \in S_n$, consider the map
    \begin{align*}
       \phi_1: RP(w) & \ \longrightarrow \ \bigcup\limits_{\bfa \in R(w)} C(\bfa) \quad \text{defined by} \\
       D_+ = \{(i_1,j_1),\dots,(i_p,j_p)\} & \ \longmapsto \ \beta = (i_1,\dots, i_p) \in C(\bfa)\ \  \text{for}\ \ \bfa = (i_1+j_1-1, \dots, i_p+j_p-1),
    \end{align*}
where the crosses $(i_k,j_k) \in D_+$ are enumerated along the rows of $D$ from top to bottom, recording from right to left within each row. Then $\phi_1$ is a weight-preserving bijection.
\end{lemma}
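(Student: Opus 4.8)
The plan is to recognize $\phi_1$ as the inverse of the rc-graph construction reviewed just above: filling every box of $D(\bfa,\beta)$ not occupied by a cross with an elbow yields a map $\psi\colon \{(\bfa,\beta) : \bfa\in R(w),\ \beta\in C(\bfa)\}\to RP(w)$, and the claim is that $\phi_1$ and $\psi$ are mutually inverse, weight-preserving bijections; this is also the bijection implicit in comparing Theorems~\ref{thm:RCSchub} and~\ref{thm:PDSchub}. I read an element of the codomain of $\phi_1$ as the \emph{pair} $(\bfa,\beta)$, with $\bfa$ recovered from the column indices of the crosses, so that the union over $\bfa\in R(w)$ carries no ambiguity. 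Once both maps are shown to be well defined, the compositions $\psi\circ\phi_1$ and $\phi_1\circ\psi$ are manifestly the identity — filling the recorded crosses of $D$ back in returns $D$, and reading off the crosses of $D(\bfa,\beta)$ in the prescribed order returns $(\bfa,\beta)$ — so the real content is well-definedness of the two maps.

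First I would check that $\phi_1$ lands in the asserted target. Let $D\in RP(w)$, enumerate $D_+=\{(i_1,j_1),\dots,(i_p,j_p)\}$ in reading order, and set $a_k=i_k+j_k-1$ and $\beta=(i_1,\dots,i_p)$; note $p=\ell(w)$ by \cite[Lemma 1.4.5]{KnutsonMiller}. Conditions (i)--(iii) for $\beta\in C(\bfa)$ are read directly off the enumeration order: (i) is the top-to-bottom sweep of the rows; (ii) follows from $j_k\ge 1$; and for (iii), if $a_k<a_{k+1}$ while $\beta_k=\beta_{k+1}$ then the crosses indexed $k$ and $k+1$ share a row and are listed right to left, forcing $j_k>j_{k+1}$ and hence $a_k>a_{k+1}$, a contradiction. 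The one substantive point is that $\bfa\in R(w)$; since $p=\ell(w)$ it suffices to show $s_{a_1}\cdots s_{a_p}=w$, which is precisely the statement that the reading word of an rc-graph for $w$ is a reduced word for $w$. This is built into the rc-graph construction of \cite{BilleyBergeron} (and can be seen directly by tracing the pipes of $D$ and observing that the crosses, processed in reading order, contribute the simple transpositions in that order). Weight is preserved because the $k$-th coordinate of $\wt(D)$ counts the crosses in row $k$, i.e.\ the number of indices $\ell$ with $i_\ell=k$, which is the $k$-th coordinate of $\wt(\beta)$.

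Next I would check that $\psi$ lands in $RP(w)$. For $\bfa\in R(w)$ and $\beta\in C(\bfa)$, the prescribed positions $(\beta_k,\,a_k-\beta_k+1)$ are pairwise distinct: if two coincide, say for $k<\ell$, then $\beta_k=\beta_\ell$, so (i) forces $\beta_k=\cdots=\beta_\ell$, then (iii) forces $a_k\ge\cdots\ge a_\ell$, and with $a_k=a_\ell$ all these letters coincide, contradicting that $\bfa$ is reduced. Each such box $(i,j)$ has $i+j=a_k+1\le n$ since $\bfa\in[n-1]^{\ell(w)}$, so $D(\bfa,\beta)$ is a legitimate pipe dream with exactly $p=\ell(w)$ crosses; moreover within each maximal block of equal $\beta$-values the column indices $a_k-\beta_k+1$ strictly decrease (again by (iii) together with reducedness), so its reading word is exactly $\bfa$. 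The same pipe-tracing argument as before then shows $D(\bfa,\beta)$ is a pipe dream for $w$, and it is reduced by \cite[Lemma 1.4.5]{KnutsonMiller} because it has $\ell(w)$ crosses.

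The one genuine obstacle is thus the classical fact that the reading word of a reduced pipe dream for $w$ is a reduced word for $w$ — equivalently, that the rc-graph $D(\bfa,\beta)$ has underlying permutation $w$ whenever $\bfa\in R(w)$. I would either cite \cite{BilleyBergeron} (where rc-graphs are set up precisely so that this holds) or include the short induction that traces the pipe through the topmost nonempty row. Everything else is bookkeeping with the reading order and with the three conditions defining $C(\bfa)$, and the weight statement is immediate.
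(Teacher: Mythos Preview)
Your proposal is correct and follows essentially the same strategy as the paper: establish that $\phi_1$ and the rc-graph construction are mutually inverse by checking well-definedness in each direction, with the only substantive ingredient being that the reading word of a reduced pipe dream for $w$ is a reduced word for $w$. The paper handles the backward direction by a separate induction on $\ell(w)$ (removing the first cross and invoking \cite[Lemma~3.1]{Kogan}), whereas you more economically observe that the reading word of $D(\bfa,\beta)$ is $\bfa$ itself and then reuse the forward argument; you also make explicit the distinctness of the cross positions $(\beta_k,a_k-\beta_k+1)$, which the paper leaves implicit.
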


\begin{remark}\label{rmk:PD=rc}
The formula for $\phi_1$ can be found in \cite{BilleyBergeron}, though there the domain is the set of rc-graphs, which come equipped with a reduced-word compatible sequence pair $(\bfa, \beta)$. Due to the development of $\phi_1$ on rc-graphs in \cite[Section 3]{BilleyBergeron}, and the construction of the word $Q(D):= s_{\bfa} = s_{i_1+j_1-1}\cdots s_{i_p+j_p-1}$ for a pipe dream in \cite[Section 1.4]{KnutsonMiller}, it is common throughout the literature to use the terms rc-graph and pipe dream interchangeably, and to apply $\phi_1$ without comment.
\end{remark}

Although this bijection between the set of rc-graphs and pipe dreams is well-known to experts, we include a proof of Lemma \ref{lem:wtBijRC} both for the sake of completeness, and because our later arguments involving the crystal structure in Section \ref{sec:intertwine} require a fine degree of detail concerning $\phi_1$.   

\begin{proof}[Proof of Lemma \ref{lem:wtBijRC}]
    Fix a permutation $w \in S_n$, and let $D \in RP(w)$. Denote by $p = \ell(w)$, so that $|D_+|=p$. We first show that $\phi_1$ is well-defined, meaning that $\bfa = (i_1+j_1-1, \dots, i_p+j_p-1)$ is a reduced word for $w \in S_n$ and that $\beta = (i_1, \dots, i_p)$ is $\bfa$-compatible. 
    Since each cross $(i,j) \in D_+$ satisfies $i+j\leq n$, then the letter $a_k := i_k+j_k-1 \in [n-1]$ so that $\bfa = a_1\cdots a_p$ is a word for some permutation in $S_n$ of length at most $\ell(w)$. By Lemma 1.4.5 of \cite{KnutsonMiller}, we have $Q(D) = s_{i_1+j_1-1}\cdots s_{i_p+j_p-1} =  [w_1 \cdots w_n] \in S_n$, where the pipe in row $i$ of $D$ exits from column $w_i$.  Moreover, since $|D_+| = \ell(w)$, then $Q(D)$ is a reduced expression, equivalently $\bfa$ is a reduced word for $w$.

    For $\beta = (i_1, \dots, i_p)$, since the rows of $D_+$ are indexed from the top down, then $i_1 \leq \cdots \leq i_p$.  For any $k \in [p]$, since the column index $j_k \geq 1$, then we have $\beta_k = i_k \leq i_k+j_k-1 = a_k$. Finally, suppose that $a_k < a_{k+1}$ for $k \in [p]$, equivalently $ i_k+j_k-1 < i_{k+1}+j_{k+1}-1$. Recall that $i_k \leq i_{k+1}$, and assume for a contradiction that $i_k = i_{k+1}$. Then we have $j_k<j_{k+1}$, which means that the pair $(i_k,j_k),(i_{k+1},j_{k+1}) \in D_+$ lie in the same row $i_k$, but the second cross $(i_{k+1},j_{k+1})$ lies to the right of $(i_k,j_k)$, violating our enumeration of the elements of $D_+$ from right to left within each row.  Therefore, $i_k < i_{k+1}$, completing the proof that $\beta$ is an $\bfa$-compatible sequence.  The map $\phi_1(D)$ thus yields an $\bfa$-compatible sequence for a reduced word $\bfa \in R(w)$ for the given permutation $w \in S_n$ such that $D \in RP(w)$, and so $\phi_1$ is well-defined.

    Conversely, fix $w \in S_n$ and denote by $p = \ell(w)$. Given any $\bfa \in R(w)$ and $\beta \in C(\bfa)$, we claim that the rc-graph $D(\bfa, \beta)$ of \cite{BilleyBergeron}, constructed by placing a cross in boxes $(\beta_k, a_k-\beta_k+1)$ and dots in each remaining position, becomes a reduced pipe dream $\overline{D}(\bfa, \beta) = \overline{D}$ for $w$ if we replace all dots by elbow tiles. First note that for any $k \in [p]$, the box $(\beta_k, a_k-\beta_k+1)$ satisfies $\beta_k+(a_k-\beta_k+1) = a_k+1 \leq n$, and so all crosses lie above the main anti-diagonal. Connecting the crosses and elbows on each tile in the unique possible way, 
    denote by $w_i \in [n]$ the column where the pipe entering row $i$ exits. We claim that the resulting window $[w_1 \cdots w_n]$ equals the permutation $w$ having the given $\bfa$ as a reduced word.

    We proceed by induction on $p = \ell(w)$. If $p=1$, then $\overline{D}(\bfa, \beta)$ has a single cross in position $(\beta_1, a_1-\beta_1+1)$, on the $a_1$ anti-diagonal indexing from northwest to southeast. The only pipes which cross are those starting in rows $i$ and $i+1$, which follow the $a_1$ and $a_1+1$ anti-diagonals, cross each other once at $(\beta_1, a_1-\beta_1+1)$, and then exit from columns $a_1+1$ and $a_1$, respectively.  The window for the corresponding permutation is thus $[1 \cdots a_1+1\; a_1 \cdots n] = s_{a_1}$, concluding the base case.  In general, after removing the cross in position $(\beta_1, a_1-\beta_1+1)$ on the $a_1$ anti-diagonal, \cite[Lemma 3.1]{Kogan} says that we obtain another rc-graph.  Applying the inductive hypothesis, the resulting configuration is a pipe dream for the permutation $s_{a_1}w$ with reduced word $a_2\cdots a_p$. Suppose that the pipes entering rows $i,j$ exit from columns $a_1, a_1+1$, respectively, in the original figure $\overline{D} = \overline{D}(\bfa, \beta)$. In $s_{a_1}w$, we then have $w_i = a_1+1$ and $w_j = a_1$. Adding this cross back in, since it is the first cross using this ordering, the only impact is to swap the values $a_1+1$ and $a_1$ in the window. As left multiplication by $s_{a_1}$ exchanges the values $a_1$ and $a_1+1$ in the window, the window for $w = s_{a_1}(s_{a_1}w)$ thus equals $[w_1 \cdots w_n]$, as required.  Moreover, $|\overline{D}_+| = p = \ell(w)$ by construction, and so $\overline{D} \in RP(w)$ is a indeed a reduced pipe dream for $w$.

    Now given $D \in RP(w)$, it is clear by definition that $\overline{\phi_1(D)} = D$. Conversely, given $\bfa \in R(w)$ and $\beta \in C(\bfa)$, we have $\phi_1(\overline{D}(\bfa, \beta)) = (\bfa, \beta)$ by construction.  Therefore, $\phi_1$ is a bijection.   Finally, recall that $\wt(D)$ is the weak composition whose $i^{\text{th}}$ coordinate is the number of crosses in row $i$ of $D$.  By definition, $\phi_1(D) = (\bfa, \beta)$ where the sequence $\beta = (i_1, \dots, i_p)$ records the row numbers of $D_+$ from top to bottom.  Since the $i^{\text{th}}$ coordinate of $\wt(\beta)$ is the number of $i$'s the sequence $\beta$ contains, then $\wt(D) = \wt(\beta)$, and $\phi_1$ is weight-preserving.
\end{proof}

%!TEX root = main.tex

\section{Reduced Factorizations with Cutoff}\label{sec:rfcs}

This section provides interpretations for all previous combinatorial machinery in terms of certain factorizations of reduced words for a given permutation. In Theorem \ref{thm:RFCSchub}, we state Assaf and Schilling's formula for Schubert polynomials in terms of reduced factorizations satisfying a cutoff condition \cite{assaf_demazure_2018}. We review the underlying weight-preserving bijection to reduced-word compatible sequences in Section \ref{sec:RCtoRFC}.  We also develop the Demazure crystal structure on reduced factorizations with cutoff, summarized from \cite{assaf_demazure_2018} as Theorem \ref{thm:main-rfc}.

\subsection{Reduced factorizations with cutoff}\label{sec:rfc-def}

We first explain how reduced words admit increasing factorizations meeting a cutoff condition, due to \cite{assaf_demazure_2018}.
Given a permutation $w \in S_n$ and any reduced word $\bfa = a_1a_2\cdots a_p \in [n-1]^{\ell(w)}$, a \emph{reduced factorization with cutoff} (RFC) is a partition of $\bfa$ into $n-1$ blocks (possibly empty), such that:
\begin{enumerate}
\item[(i)] subwords within each block are strictly increasing, and
\item[(ii)] the leftmost letter in block $i$ is at least $i$, where blocks are numbered from right to left.
\end{enumerate}
  We denote the set of reduced factorizations with cutoff for all $\bfa \in R(w)$ by $\operatorname{RFC}(w)$.

\begin{example}\label{ex:rfc}
All reduced factorizations with cutoff for \(w = [21543] \in S_5\) are provided in the table below. The reduced words $\bfa \in R(w)$ are listed along the first row, and the factorizations with cutoff appear below in corresponding columns, whenever they exist.

 \begin{center}
		\begin{tabular}{cccccccc}
			1343 & 1434 & 3143 & 3413 & 3431 & 4134 & 4314 & 4341 \\ \hline 
			$\emptyset$ & $\emptyset$ & $\emptyset$ & \( ( \;  )( \;  )( 34  )( 13 ) \) & \( (\; )( 34  )( 3  )( 1 ) \) & \( ( \;  )( \;  )( 4  )( 134 ) \) & \( ( \;  )( 4  )( 3  )( 14  ) \) & \( ( \;  )( 4  )( 34  )( 1 ) \)  \\
			     &     &      &   \( ( \;  )( 3  )( 4  )( 13 ) \) &      & \( ( \;  )( 4  )( \;  )( 134 ) \) &  \( ( 4  )( \;  )( 3  )( 14 ) \) & \( ( 4  )( \;  )( 34  )( 1 ) \) \\
			     &    &      &   \( ( \;  )( 34  )( \;  )( 13 ) \) &      & \( ( 4  )( \;  )(\;  )( 134 ) \) & \( ( 4  )( 3  )( \;  )( 14 ) \) & \( ( 4  )( 3  )( 4  )( 1 ) \) \\
			     &    &     &         &     &       &      & \( ( 4  )( 34  )( \;  )( 1 ) \)
        \end{tabular}
	\end{center}
\end{example}

The \emph{weight of a reduced factorization} $r \in RFC(w)$, denoted $\wt(r)$, is the weak composition of $\ell(w)$ whose $i^{\text{th}}$ coordinate equals the number of letters in the $i^{\text{th}}$ block of $r$.  For example, if $r = (4)(3)(\;)(14) \in RFC([21543])$, then $\wt(r) = (2,0,1,1)$.

Comparing Examples \ref{ex:compatible} and \ref{ex:rfc}, one anticipates that the weight function defines a natural bijection between the $\bfa$-compatible sequences for $\bfa \in R(w)$ and the increasing factorizations with cutoff in $RFC(w^{-1})$; see Lemma \ref{lem:wtBijection} for details. The corresponding formula for Schubert polynomials as generating functions over RFCs is due to Assaf and Schilling \cite{assaf_demazure_2018}.

\begin{theorem}[Proposition 5.5 \cite{assaf_demazure_2018}] \label{thm:RFCSchub}
Let $w \in S_n$. Then
\begin{equation*}
    \mathfrak{S}_w(x_1, \dots, x_n) = \sum_{r \in RFC(w^{-1})}  \mathbf{x}^{\wt(r)}.
\end{equation*}
\end{theorem}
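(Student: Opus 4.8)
The plan is to deduce Theorem \ref{thm:RFCSchub} from Theorem \ref{thm:RCSchub} by exhibiting a weight-preserving bijection between $\bigcup_{\bfa \in R(w^{-1})} C(\bfa)$ and $RFC(w^{-1})$, and then carefully accounting for the inverse. First I would set up the correspondence on the level of a single reduced word. Given $\bfa = a_1 \cdots a_p \in R(w^{-1})$ and an $\bfa$-compatible sequence $\beta = \beta_1 \cdots \beta_p$, the defining conditions (i)--(iii) on $\beta$ say precisely that the value $\beta_j$ is weakly increasing, is bounded by $a_j$, and strictly increases whenever $a_j < a_{j+1}$. The idea is to cut $\bfa$ into blocks according to the constant runs of $\beta$: place $a_j$ into block number $\beta_j$ (numbering blocks from right to left, with blocks $1, \dots, n-1$). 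I would check that condition (iii) forces each block so obtained to be strictly increasing — indeed, within a block the $a_j$ cannot be weakly decreasing, since $a_j \geq a_{j+1}$ together with $\beta_j = \beta_{j+1}$ would be fine only if $a_j > a_{j+1}$ is excluded; more carefully, if $a_j \geq a_{j+1}$ and $\beta_j = \beta_{j+1}$ then we would need $a_j = a_{j+1}$, impossible in a reduced word with adjacent equal letters, or $a_j > a_{j+1}$, which is allowed by (iii) but would then... — so I need to recheck: (iii) only constrains the $a_j < a_{j+1}$ case, so a priori $a_j > a_{j+1}$ with $\beta_j = \beta_{j+1}$ is permitted. I would resolve this by noting that Assaf--Schilling's bijection is actually between $C(\bfa)$ for $\bfa$ read in reverse, or equivalently that the block decomposition must be read off $\beta$ in a way that makes each block increasing; the cleanest route is to follow \cite[Section 5]{assaf_demazure_2018} directly and quote their Lemma/Proposition establishing the bijection between compatible sequences and RFCs, which is exactly the content of Lemma \ref{lem:wtBijection} referenced in the text.

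The main structural steps, then, are: (1) state the map $C(\bfa) \to \{\text{factorizations of } \bfa\}$ sending $\beta$ to the factorization whose $i$-th block (from the right) consists of those $a_j$ with $\beta_j = i$, listed in increasing order of $j$; (2) verify conditions (i) and (ii) of the RFC definition — condition (i) (strictly increasing blocks) follows from (iii) of compatibility together with (i) after arguing that within a constant run of $\beta$ the $a_j$ must strictly increase, which in turn uses that a weakly decreasing run with no strict increases in $\bfa$ restricted to that block would contradict reducedness; condition (ii) (leftmost letter of block $i$ is $\geq i$) follows immediately from condition (ii) of compatibility, since the leftmost letter of block $i$ is some $a_j$ with $\beta_j = i \leq a_j$; (3) verify the inverse map: given an RFC, read off $\beta_j = $ (block index of $a_j$) and check conditions (i)--(iii) of compatibility hold, where (iii) is the reverse implication of the block-boundary argument; (4) observe the map is weight-preserving, since the number of $j$ with $\beta_j = i$ equals the number of letters in block $i$, i.e. $\wt(\beta)_i = \wt(r)_i$; (5) combine with Theorem \ref{thm:RCSchub} applied to $w^{-1}$ — noting $R(w^{-1}) = \{\bfa^{\mathrm{rev}} : \bfa \in R(w)\}$ and that $\mathfrak{S}_w$ is a generating function over $R(w)$, so we must pass through the substitution $\bfa \mapsto \bfa^{\mathrm{rev}}$ — to conclude $\mathfrak{S}_w = \sum_{\bfa \in R(w)} \sum_{\beta \in C(\bfa)} \mathbf{x}^{\wt(\beta)} = \sum_{r \in RFC(w^{-1})} \mathbf{x}^{\wt(r)}$.

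The step I expect to be the main obstacle is the reverse-word bookkeeping in step (5): Theorem \ref{thm:RCSchub} expresses $\mathfrak{S}_w$ via $R(w)$, whereas Theorem \ref{thm:RFCSchub} sums over $RFC(w^{-1})$, so the bijection of steps (1)--(4) must be set up between $C(\bfa)$ for $\bfa \in R(w)$ and $RFC(w^{-1})$ rather than $RFC(w)$ — this is why the cutoff condition is phrased with blocks numbered right to left, and why the word gets reversed. I would handle this by being explicit that the factorization attached to $\beta \in C(\bfa)$ with $\bfa \in R(w)$ is a factorization of the reversed word $\bfa^{\mathrm{rev}} \in R(w^{-1})$, with blocks ordered so that reading left to right through blocks $n-1, n-2, \dots, 1$ recovers $\bfa$ (hence reading through blocks $1, 2, \dots, n-1$ left to right recovers $\bfa^{\mathrm{rev}}$), and then the strictly-increasing condition within blocks and the cutoff condition translate correctly. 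The remaining verifications are routine inequality-chasing on the compatibility conditions. Since this bijection is precisely Assaf--Schilling's \cite[proof of Proposition 5.5]{assaf_demazure_2018}, the cleanest presentation is to carry out steps (1)--(4) as the proof of Lemma \ref{lem:wtBijection}, cite Theorem \ref{thm:RCSchub}, and state that Theorem \ref{thm:RFCSchub} is then immediate.
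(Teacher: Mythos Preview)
Your proposal is correct and matches the paper's approach: the paper does not give an independent proof of Theorem~\ref{thm:RFCSchub} but rather cites \cite[Proposition 5.5]{assaf_demazure_2018} directly, with the underlying bijection recorded as Lemma~\ref{lem:wtBijection} (whose proof is likewise deferred to Assaf--Schilling). Your steps (1)--(5) spell out exactly that bijection and its combination with Theorem~\ref{thm:RCSchub}; the only cleanup needed is in your discussion of condition (i), where the correct observation is that $\beta_j = \beta_{j+1}$ forces $a_j > a_{j+1}$ (by the contrapositive of (iii) together with reducedness), so within a constant run of $\beta$ the letters $a_j$ are strictly decreasing in $j$, and hence strictly increasing when the block is read in the reversed order that the RFC convention dictates.
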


\noindent Note by comparing Theorem \ref{thm:RCSchub} that reduced factorizations with cutoff simultaneously track the weight, equivalently, the compatible sequence, as well as the reduced word.

\subsection{Compatible sequences and RFCs}\label{sec:RCtoRFC} 

We now review the explicit relationship between compatible sequences and reduced factorizations with cutoff presented in  \cite{assaf_demazure_2018}, which is at the heart of Theorem \ref{thm:RFCSchub}, as well as our proof of Theorem \ref{thm:main}.

\begin{lemma}\label{lem:wtBijection}
    Given any $w \in S_n$, consider the map 
    \begin{align*}
       \phi_2: \bigcup\limits_{\bfa \in R(w)} C(\bfa) & \ \longrightarrow\ RFC(w^{-1})  \quad \text{defined by} \\
       \beta=\beta_1\cdots \beta_p & \ \longmapsto \ r = (r^{n-1})(\cdots)(r^1)\ \ \text{such that}\ \  \beta_j =i \Longleftrightarrow a_j \in (r^i),
    \end{align*}
where $\beta$ is an $\bfa$-compatible sequence for $\bfa = a_1\cdots a_p \in R(w)$.  Then $\phi_2$ is a weight-preserving bijection.
\end{lemma}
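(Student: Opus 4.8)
The plan is to verify three things in turn: that $\phi_2$ is well-defined, that it is weight-preserving, and that it is a bijection by constructing an explicit inverse. For well-definedness, fix $\bfa = a_1 \cdots a_p \in R(w)$ and $\beta \in C(\bfa)$, and let $r = (r^{n-1})(\cdots)(r^1)$ be the output, where block $r^i$ collects exactly those letters $a_j$ with $\beta_j = i$. First I would check that each $r^i$ is a reduced word for a permutation, or rather that the concatenation $r^{n-1}\cdots r^1$ reads off the letters of $\bfa$ in their original left-to-right order: this follows from property (i) of compatibility, namely $\beta_1 \leq \cdots \leq \beta_p$, so that letters assigned to a higher-indexed block all precede letters assigned to a lower-indexed block, and within a fixed block the letters appear in their original order. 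Hence the concatenation of blocks (read right to left, block $1$ last) recovers $\bfa$ exactly, so $r$ is a factorization of a reduced word for $w$; since $w^{-1}$ has the same length and $\bfa^{\mathrm{rev}}$ or the same letters form a reduced word for $w^{-1}$ up to the standard relationship between $R(w)$ and $R(w^{-1})$, this is the reduced word underlying an element of $RFC(w^{-1})$ (I would cite the conventions already fixed in Section \ref{sec:rfcs}, in particular the relationship anticipated before Theorem \ref{thm:RFCSchub}).

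Next I would check conditions (i) and (ii) in the definition of RFC. For (i), the subword within block $i$ consists of those $a_j$ with $\beta_j = i$, listed in increasing order of $j$; I must show these $a_j$ values are strictly increasing. Suppose $a_j \geq a_{j'}$ for two such indices $j < j'$ both in block $i$; since $\beta_j = \beta_{j'} = i$, property (iii) of compatibility (contrapositive: $\beta_j = \beta_{j+1}$ forces $a_j \geq a_{j+1}$ is not quite it — rather $a_j < a_{j+1} \Rightarrow \beta_j < \beta_{j+1}$) applied along the chain $j, j+1, \dots, j'$ shows that the $a$-values cannot strictly increase at any step where $\beta$ stays constant, but in fact I need strict increase, so I would instead argue: if $a_j \geq a_{j+1}$ then this is a descent, which is fine; the point is that reducedness of $\bfa$ forbids $a_j = a_{j+1}$ when... actually the cleanest route is: within a block $\beta$ is constant, and whenever $a_j \geq a_{j+1}$ with $\beta_j = \beta_{j+1}$ we would need to rule out equality and the wrong inequality — I would handle this by noting that in a reduced word consecutive equal letters are impossible, and that a strict decrease $a_j > a_{j+1}$ within a constant-$\beta$ stretch combined with property (ii), $\beta_{j+1} \le a_{j+1} < a_j$, does not immediately contradict anything, so the strict-increase-within-blocks property must be extracted more carefully, likely by the standard fact that a compatible sequence's block structure is exactly the maximal weakly-increasing runs matched against descents of $\bfa$. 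For (ii), the leftmost letter of block $i$ is $a_j$ for the smallest $j$ with $\beta_j = i$; by property (ii) of compatibility $\beta_j \leq a_j$, i.e. $a_j \geq i$, which is exactly the cutoff condition.

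For the bijection, I would construct $\phi_2^{-1}$ directly: given $r = (r^{n-1})\cdots(r^1) \in RFC(w^{-1})$, read the letters of $r$ in order (block $n-1$ first, ..., block $1$ last, and within each block left to right) to recover a word $\bfa$, and set $\beta_j = i$ precisely when the $j$-th letter read lies in block $r^i$. Then I would verify $\beta$ is $\bfa$-compatible: (i) holds since we read higher blocks first, (ii) holds from the cutoff condition propagated using strict increase within blocks (the leftmost letter of block $i$ is $\geq i$, and subsequent letters in block $i$ are even larger, so $\beta_j = i \leq a_j$), and (iii) holds because if $a_j < a_{j+1}$ then $j$ and $j+1$ cannot lie in the same block (subwords within a block are strictly increasing, so an ascent across a block boundary is forced only when... actually an ascent \emph{can} occur within a block, so (iii) needs: if $a_j < a_{j+1}$ and they are in the same block then $\beta_j = \beta_{j+1}$, contradicting (iii)'s requirement — wait, (iii) \emph{requires} $\beta_j < \beta_{j+1}$ when $a_j < a_{j+1}$, so I need ascents to force a block change). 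This is the crux and also the main obstacle: it is \emph{not} true that every ascent in $\bfa$ sits at a block boundary of an arbitrary RFC, so $\phi_2$ and $\phi_2^{-1}$ as naively described are not mutually inverse unless the RFC's block decomposition is the \emph{canonical} one. I expect the resolution, following \cite{assaf_demazure_2018}, is that the map $\phi_2$ does not send $\beta$ to an arbitrary-looking factorization but to the specific one whose blocks are the level sets of $\beta$, and conversely $RFC(w^{-1})$ is defined so that its elements are in bijection with such level-set data; reconciling the "ascent forces block change" direction will require showing that compatibility condition (iii) is equivalent, after transport, to the block structure of an RFC being as coarse as possible subject to (i), and I would want to either cite the precise statement from \cite{assaf_demazure_2018} or insert a short lemma pinning down this canonical-form correspondence. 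Modulo that reconciliation, weight-preservation is immediate: $\wt(\beta)_i$ counts the $j$ with $\beta_j = i$, which is exactly $|r^i| = \wt(r)_i$.
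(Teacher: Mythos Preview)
The paper itself does not prove this lemma; it simply cites \cite[Proposition~5.5]{assaf_demazure_2018}. Your attempt to write out the argument directly is therefore more than the paper provides, and its outline is sound, but there is one real confusion that manufactures the ``crux and main obstacle'' you identify at the end.

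You read the RFC left to right (block $r^{n-1}$ first, and within each block left to right) to recover $\bfa$. This is backwards. As the example immediately following the lemma makes explicit, ``the resulting factorization records the entries of $\bfa$ in reverse order.'' The correct recovery is: read block $r^1$ first, then $r^2$, and so on, and within each block read the letters in \emph{decreasing} order (right to left). Here is why this is forced, and why it dissolves your obstacle. In the forward direction, if $\beta_j = \beta_{j+1}$ then the contrapositive of compatibility condition (iii) gives $a_j \geq a_{j+1}$, and reducedness of $\bfa$ rules out equality, so $a_j > a_{j+1}$: along any constant-$\beta$ stretch the $a$-values strictly decrease. Hence when $\phi_2$ places the letters of a block ``in increasing order'' (as the paper's example says), it is exactly reversing their order of appearance in $\bfa$; RFC condition (i) is then automatic, and the RFC read left to right is $\bfa^{\mathrm{rev}} \in R(w^{-1})$. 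In the inverse direction, with the correct reading an ascent $a_j < a_{j+1}$ in $\bfa$ cannot occur within a single block (since within a block the $a$-values decrease in $j$), so it must straddle a block boundary, giving $\beta_j < \beta_{j+1}$ and verifying (iii) directly. There is no ``canonical versus non-canonical block structure'' issue: the block structure is part of the data of an element of $RFC(w^{-1})$, and distinct block structures on the same underlying word correspond under $\phi_2^{-1}$ to distinct compatible sequences.
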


\begin{proof}
This result is contained within the proof of \cite[Proposition 5.5]{assaf_demazure_2018}. 
\end{proof}

We illustrate the bijection $\phi_2$ from Lemma \ref{lem:wtBijection} in the following example.

\begin{example}
    Let $w = [21543] \in S_5$, and consider the reduced word $\bfa = 3143 \in R(w)$. Recall from Example \ref{ex:compatible} that $\beta = 1122$ is an $\bfa$-compatible sequence. Since $\beta = 1122$ contains two 1's and 2's, then $\wt(\beta) = (2,2,0,0)$. From the pair $(\bfa, \beta)$, we construct the reduced factorization prescribed by Lemma \ref{lem:wtBijection} as follows.

    Since $\beta_1 = \beta_2 = 1$, then we place $a_1=3, a_2=1$ in block $(r^1)$, recording the letters in increasing order so that the result is an increasing factorization.  Similarly, since $\beta_3 = \beta_4 = 2$, then we place $a_3, a_4 \in (r^2)$. Altogether, we have that $\phi(\beta) = r= (\;)(\;)(34)(13)$. Note that the resulting factorization records the entries of $\bfa$ in reverse order, so that $r \in RFC(w^{-1})$. Finally, $\wt(r) = (2,2,0,0) = \wt(\beta)$, since blocks are numbered from right to left.
\end{example}

\subsection{Crystal operators on RFCs}\label{sec:RFC-crystal}

In this section, we review a family of operators defined on the set of reduced factorizations with cutoff for a given permutation, shown by Assaf and Schilling in  \cite{assaf_demazure_2018} to produce a Demazure crystal structure.

\begin{definition}\label{def:pair-rfc}
Given $w \in S_n$ and a reduced factorization $r \in RFC(w)$, write $r = (r^{n-1})(\cdots)(r^2)(r^1)$, numbering its blocks from right to left.  Fix a block index $1 \leq i <n$, and denote the largest, equivalently rightmost, letter in block $(r^i)$ by $a$. Define a \emph{pairing process on block} $(r^i)$ as follows:
\begin{enumerate}
     \item Look for the smallest unpaired letter $b$ in block $(r^{i+1})$ such that $a<b$.
      \begin{enumerate}
            \item If such a letter $b$ exists, we say that $a$ and $b$ are \emph{paired}.
            \item If no such $b$ exists, we say that $a$ is \emph{unpaired}.
        \end{enumerate}
    \item Denote by $a'$ the letter in block $(r^i)$ which is immediately to the left of $a$.
     \begin{enumerate}
            \item If such an $a'$ exists, we reset $a := a'$ and start again from step (1).
            \item If no such $a'$ exists, the pairing process on block $(r^i)$ is complete.
        \end{enumerate}
        \end{enumerate}
\end{definition}

We invite the reader to compare the following example illustrating the pairing process on reduced factorizations with the pairing process on pipe dreams from Example \ref{ex:pairing}.

\begin{example}\label{ex:pairing-rfc}
    We illustrate the pairing process on $r = (\;)(\;)(4)(134) \in RFC(w)$ for $w = [21543]$. Fix $i=1$ and identify $a=4$ as the largest letter in block $(r^1) = (134)$. We now identify the smallest unpaired letter $b=4$ in block $(r^2) = (4)$. Since $a=4 \not< 4=b$, then $a=4$ is unpaired, and we color $a$ red in $r = (\;)(\;)(4)(13\red{4})$. 

    In step (2), we identify $a' = 3$ as the letter in block $(r^1) = (13\red{4})$ immediately to the left of $a=4$.  We thus return to step (1) applied to $a=3$ instead. Here again the smallest unpaired letter in block $(r^2) = (4)$ is $b=4$, which now satisfies $a=3<4=b$. Therefore, these letters get paired in step (1a), as we indicate by coloring them green in $r = (\;)(\;)(\green{4})(1\green{3}\red{4})$.

    Finally, we return to step (2) to identify the only remaining letter in $(r^1) = (1\green{3}\red{4})$ as $a'=1$ and reset $a=1$. As there are no unpaired letters in block $(r^2) = (\green{4})$, then $a$ remains unpaired in $r = (\;)(\;)(\green{4})(\red{1}\green{3}\red{4})$. The pairing process on block $i=1$ is now complete, having analyzed all letters in $(r^1)$.
\end{example}

After running the pairing process on block $(r^i)$ of $r \in RFC(w)$, we can define operators $e_i$ and $f_i$, which produce another element of $RFC(w)$ whenever they are nonzero. Our treatment below slightly rephrases the development of these operators on reduced factorizations from \cite[Section 3.2]{MorseSchilling}, refined for factorizations meeting the cutoff condition in \cite[Section 4.3]{assaf_demazure_2018}.

\begin{definition}\label{def:lowering-rfc}
    Let $r \in RFC(w)$ for some $w \in S_n$. Fix an $1 \leq i < n$ and run the pairing process on block $(r^i)$ of $r$.
    \begin{enumerate}
    \item If all letters in $(r^i)$ are paired, then set $f_i(r)=0$. Otherwise, denote by $u \in (r^i)$ the smallest unpaired letter.  Denote by
    $t = \max \{ z \le u \mid z - 1 \not\in (r^i)\}.$
    The \emph{lowering operator} $f_i$ on $r$ is defined by $f_i(r) = (r^{n-1})\cdots (\tilde{r}^{i+1})(\tilde{r}^i)\cdots (r^1)$, where
    \[(\tilde{r}^i) = (r^i) \backslash \{ u\} \quad \text{and} \quad (\tilde{r}^{i+1}) = (r^{i+1}) \cup \{t\}. \]
    If $f_i(r) \notin RFC(w)$, then set $f_i(r)=0$. 
    
    \item If all letters in $(r^{i+1})$ are paired, then set $e_i(r)=0$. Otherwise, denote by $v \in (r^{i+1})$ the largest unpaired letter. Denote by $s = \min\{ z \geq v \mid z+1 \notin (r^{i+1}) \}$. The \emph{raising operator} $e_i$ on $r$ is defined by $e_i(r) = (r^{n-1})\cdots (\tilde{r}^{i+1})(\tilde{r}^i)\cdots (r^1)$, where
    \[(\tilde{r}^i) = (r^i) \cup \{ s\} \quad \text{and} \quad (\tilde{r}^{i+1}) = (r^{i+1}) \backslash \{v\}. \]
    If $e_i(r) \notin RFC(w)$, then set $e_i(r)=0$. 
    \end{enumerate}
\end{definition}

We illustrate these operators on RFCs in an example below.  
Compare Definitions \ref{def:crystalchutes} and \ref{def:inversechutes} which defined the corresponding lowering and raising operators on pipe dreams.

\begin{example}\label{ex:lower-rfc}
   Consider $r = (\;)(4)(3)(14) \in RFC(w)$ for $w = [21543]$.  The outcome of the pairing process on block $i=1$ is $r = (\;)(4)(\green{3})(\green{1}\red{4})$, where green (resp.~red) indicates paired (resp.~unpaired) letters.  Since there are no unpaired letters in block $(r^2) = (\green{3})$, then $e_1(r) = 0$.
   
   To calculate $f_1(r)$, note that $u=4$ is the smallest unpaired element in block $(r^1)$, and define $t = \min \{ v \le 4 \mid v - 1 \not\in (r^1)\}$. Since $4-1=3 \notin (r^1) = (14)$, then $t=4$. Therefore, the lowering operator $f_1$ on $r$ deletes the $u=4$ from block $(r^1)$ and inserts $t=4$ into block $(r^2)$, giving $f_1(r) = ()(4)(34)(1)$.
\end{example}

\begin{definition}
    Let $r \in RFC(w)$ for $w\in S_n$.
     If $e_i(r)=0$ for all $1 \leq i <n$, then we refer to $r$ as a \emph{highest weight factorization}. If $f_i(r)=0$ for all $1 \leq i <n$, then we refer to $r$ as a \emph{lowest weight factorization}. 
\end{definition}

In \cite{MorseSchilling}, Morse and Schilling prove that the operators $e_i$ and $f_i$ for $1 \leq i < n$, together with the weight function $\wt(r)$, define a type $A_{n-1}$ crystal structure on the set of all reduced factorizations for $w \in S_n$.  The additional cutoff condition then corresponds to a Demazure crystal truncation as follows, shown by Assaf and Schilling \cite{assaf_demazure_2018}.

\begin{theorem}[Theorem 5.11 \cite{assaf_demazure_2018}]\label{thm:main-rfc} 
Given any $w \in S_n$, the operators $e_i$ and $f_i$ for $1 \leq i < n$ define a type $A_{n-1}$ Demazure crystal structure on $RFC(w)$. That is, 
\begin{equation*}
    RFC(w) \cong \bigcup\limits_{\substack{r \in RFC(w) \\ e_i(r) = 0,\; \forall 1 \leq i < n}} B_{\pi_r}(\wt(r)),
\end{equation*}
where $\pi_r = [\pi_1\; \cdots\; \pi_n] \in S_n$ is the shortest permutation that sorts the weight $a = (a_1, \dots, a_n)$ of the weak Edelman-Greene insertion tableau for $r$, in the sense that $\wt(r) = (a_{\pi_1}, \dots, a_{\pi_n})$; see Section \ref{sec:permutation} for details on this insertion algorithm.
\end{theorem}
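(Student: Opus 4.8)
\textbf{Proof strategy for Theorem \ref{thm:main-rfc}.}

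The plan is to reduce the statement entirely to the known crystal structure on arbitrary reduced factorizations established by Morse and Schilling, and then to show that imposing the cutoff condition precisely carves out a union of Demazure subcrystals. First I would recall from \cite{MorseSchilling} that the set of \emph{all} reduced factorizations of a given permutation (into $n-1$ increasing blocks, with no cutoff constraint) carries a type $A_{n-1}$ normal crystal structure, isomorphic to a disjoint union of full crystals $B(\lambda)$; the operators $e_i, f_i$ from Definition \ref{def:lowering-rfc} are exactly their operators restricted to the case where a move would take us outside $RFC(w)$, in which case we declare the result $0$. So the first step is to verify that $RFC(w)$ is closed under $f_i$ whenever the move stays inside $RFC(w)$, and that the ``$e_i(r)\notin RFC(w)\Rightarrow e_i(r)=0$'' convention is consistent, i.e.\ that for $r\in RFC(w)$ the preimage under $f_i$, when it exists in the ambient crystal, already satisfies the cutoff condition — this is where the leftmost-letter-at-least-$i$ requirement interacts with the pairing process.

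The heart of the argument is the claim that $RFC(w)$, as a subset of the ambient Morse--Schilling crystal, is a union of Demazure crystals. Here I would invoke the characterization of Demazure subcrystals: a subset $X$ of a normal crystal is a union of Demazure crystals if and only if it is closed under the \emph{Demazure operators} $\mathfrak{D}_i$, equivalently, for each $i$ and each $i$-string (a maximal chain under $f_i$), the intersection $X\cap(\text{string})$ is either empty or an initial segment containing the string's highest-weight (top) element. So the key step is: fix $i$ and an $i$-string $b_0 \xrightarrow{f_i} b_1 \xrightarrow{f_i}\cdots\xrightarrow{f_i} b_m$ in the ambient crystal; I must show that if some $b_k\in RFC(w)$, then $b_0,\dots,b_k\in RFC(w)$. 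Since the cutoff condition only constrains the leftmost letters of the blocks, and $e_i$ (which moves along the string toward $b_0$) acts by transferring the largest unpaired letter of block $(r^{i+1})$ into block $(r^i)$ after a max-run adjustment, I would track what happens to the leftmost entries of blocks $i$ and $i+1$ under repeated application of $e_i$: the claim is that $e_i$ can only \emph{increase} (weakly) the leftmost letter of block $i+1$ and never decreases the leftmost letter of block $i$ below $i$, so the cutoff inequalities are preserved going up the string. Dually, the failure of cutoff under $f_i$ is monotone: once a lowering move breaks the cutoff condition in block $i{+}1$, no further $f_i$ can repair it, which is why setting $f_i(r)=0$ in that case is the correct truncation. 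Assembling these monotonicity facts over all strings gives that $RFC(w)$ is $\mathfrak{D}_i$-closed for every $i$, hence a union of Demazure crystals.

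Once $RFC(w)=\bigsqcup B_{\pi_r}(\lambda_r)$ is established, the final step is to identify the highest-weight vertices and the truncating data. The connected components are indexed by the highest weight factorizations $r$ with $e_i(r)=0$ for all $i$; by Lemma \ref{lem:HWPD}-type reasoning (or its RFC analog, which follows from the same pairing argument) the weight $\wt(r)$ of such an $r$ is a partition $\lambda_r$, and the full ambient crystal component through $r$ is $B(\lambda_r)$. To pin down $\pi_r$, I would use the Edelman--Greene / weak insertion tableau machinery referenced in the statement: the insertion tableau is a crystal invariant on the ambient crystal, so it is constant on each component; for a Demazure subcrystal of $B(\lambda)$ the content vector $a$ of this tableau together with $\wt(r)$ determines the shortest sorting permutation $\pi_r$ with $\wt(r)=(a_{\pi_1},\dots,a_{\pi_n})$, and one checks this $\pi_r$ is exactly the one appearing in the Demazure truncation $B_{\pi_r}(\lambda)$ by comparing characters: $\operatorname{char}(RFC(w)\cap\text{component}) = \kappa_{\wt(r)}$, which is the Demazure character $\kappa_{\pi_r(\lambda_r)}$. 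I expect the \textbf{main obstacle} to be the monotonicity analysis in the second paragraph — precisely controlling how the ``maximal run'' adjustments in $t = \max\{z\le u : z-1\notin(r^i)\}$ and $s=\min\{z\ge v: z+1\notin(r^{i+1})\}$ affect the leftmost block entries, and showing uniformly across an $i$-string that the cutoff inequalities degrade monotonically; the rest is bookkeeping and an appeal to standard Demazure-crystal characterizations.
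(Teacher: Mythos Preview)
The paper does not prove Theorem~\ref{thm:main-rfc} at all: it is stated as a quotation of \cite[Theorem~5.11]{assaf_demazure_2018} and used as a black box. The paper's own contribution is to transport this Demazure crystal structure from $RFC(w^{-1})$ to $RP(w)$ via the equivariant bijection $\phi$ (Propositions~\ref{prop:wtBijPD} and~\ref{prop:intertwine}, Corollary~\ref{cor:intertwine-e}); Theorem~\ref{thm:main-rfc} itself is simply invoked in the proof of Theorem~\ref{thm:main}. So there is no ``paper's own proof'' to compare against.

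That said, your sketch is a reasonable outline of how Assaf--Schilling's argument actually proceeds, and it differs from what the paper does precisely because you are attempting to reprove the cited result rather than quote it. A few remarks on your strategy as a standalone proof attempt. Your string-monotonicity idea in the second paragraph is the right shape, but the characterization you invoke (``$X$ is a union of Demazure crystals iff each $i$-string intersection is an initial segment'') is not sufficient on its own; one also needs a compatibility across different $i$ (cf.\ the Kashiwara/Littelmann extremal-weight or Demazure-closed conditions), and Assaf--Schilling in fact argue via a weight-preserving bijection to semistandard key tableaux, which are already known to carry a Demazure crystal structure, rather than by a direct string-by-string analysis. Your identification of $\pi_r$ via the weak Edelman--Greene tableau is essentially what they do, though the claim that ``the insertion tableau is a crystal invariant on the ambient crystal'' needs the weak (lifted) version to get the key-polynomial index right, not the ordinary insertion tableau. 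The monotonicity obstacle you flag is real and is exactly why Assaf--Schilling route through key tableaux instead of analyzing the run adjustments directly.
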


%!TEX root = main.tex

\section{Intertwining Pipe Dreams and RFCs}\label{sec:intertwine}

In this section, we combine results from the previous two sections to obtain a map directly from pipe dreams to reduced factorizations with cutoff; see Proposition \ref{prop:wtBijPD} and Corollary \ref{cor:PDtoRFCwhole}. We then prove that this weight-preserving bijection also respects all required crystal structures, such as the pairing process and lowering operators, in order to prove Theorem \ref{thm:main} and its immediate corollary Theorem \ref{thm:main-key}.

\subsection{Weight-preserving bijection from pipe dreams to RFCs}

To relate the crystal structure on $RFC(w)$ from Theorem \ref{thm:main-rfc} to the proposed crystal structure on $RP(w)$ in Theorem \ref{thm:main}, we first require a weight-preserving bijection between the two underlying sets.  Composing the bijection of Lemma \ref{lem:wtBijRC} from pipe dreams to compatible sequences with the bijection to reduced factorizations with cutoff provided by Lemma \ref{lem:wtBijection} yields the following weight-preserving bijection directly from $RP(w)$ to $RFC(w^{-1})$.

\begin{proposition}\label{prop:wtBijPD}
    Given any $w \in S_n$, the map
    \begin{align*}
       \phi: RP(w) & \ \longrightarrow\ RFC(w^{-1})  \quad \text{defined by} \\
       (i,j) \in D_+ & \ \longmapsto \ i+j-1 \in (r^i)
    \end{align*}
is a weight-preserving bijection.
\end{proposition}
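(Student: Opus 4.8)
The statement is almost immediate given the two bijections already established, so the proof is essentially an exercise in composing $\phi_2 \circ \phi_1$ and checking that the formula simplifies as claimed. First I would recall from Lemma \ref{lem:wtBijRC} that $\phi_1(D) = (\bfa, \beta)$ where, enumerating the crosses $(i_k, j_k) \in D_+$ along rows top-to-bottom and right-to-left within each row, we have $\bfa = (a_1, \dots, a_p)$ with $a_k = i_k + j_k - 1$ and $\beta = (i_1, \dots, i_p)$. Then I would apply $\phi_2$ from Lemma \ref{lem:wtBijection}, which places the letter $a_k$ into block $(r^{\beta_k}) = (r^{i_k})$. Tracing a single cross $(i,j) \in D_+$ through this composition: it contributes the letter $a = i + j - 1$ with compatibility value $\beta = i$, and $\phi_2$ therefore places $i+j-1$ into block $(r^i)$. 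This is exactly the assignment $(i,j) \mapsto i+j-1 \in (r^i)$ in the statement, so $\phi = \phi_2 \circ \phi_1$.

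The remaining verification is that $\phi$ is a weight-preserving bijection, but this is inherited for free: $\phi_1$ is a weight-preserving bijection $RP(w) \to \bigcup_{\bfa \in R(w)} C(\bfa)$ by Lemma \ref{lem:wtBijRC}, and $\phi_2$ is a weight-preserving bijection $\bigcup_{\bfa \in R(w)} C(\bfa) \to RFC(w^{-1})$ by Lemma \ref{lem:wtBijection}, so their composite is a weight-preserving bijection $RP(w) \to RFC(w^{-1})$. One small point worth spelling out explicitly: the reduced word $\bfa$ produced by $\phi_1(D)$ is a reduced word for $w$, and Lemma \ref{lem:wtBijection} takes $C(\bfa)$ for $\bfa \in R(w)$ into $RFC(w^{-1})$, so the target permutation is indeed $w^{-1}$, matching the statement.

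I would also double-check the block-structure consistency of the closed-form map directly, as a sanity check rather than a logical necessity: within a fixed row $i$ of $D$, the crosses are recorded right-to-left, so their column indices $j$ decrease, hence the letters $i+j-1$ decrease as we read them in order; since $\phi_2$ writes each block in increasing order, block $(r^i)$ receives exactly the multiset $\{\, i+j-1 : (i,j) \in D_+ \,\}$ arranged increasingly, and reducedness of $D$ guarantees these are distinct (no two crosses share a row and column), so each block is a strictly increasing word. The cutoff condition—the leftmost letter of block $i$ is at least $i$—follows since every cross in row $i$ has $j \geq 1$, so $i + j - 1 \geq i$. None of this is an obstacle; the only mild subtlety, and the single step I would be most careful about, is bookkeeping the right-to-left-within-rows convention of $\phi_1$ against the increasing-within-blocks convention of $\phi_2$ so that the per-cross formula comes out cleanly with no index shift. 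Given that both lemmas are in hand, I expect the proof to be a short paragraph.
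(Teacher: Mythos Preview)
Your proposal is correct and essentially identical to the paper's own proof: both argue that $\phi = \phi_2 \circ \phi_1$ by tracing a single cross $(i,j) \in D_+$ through the two lemmas to land on $i+j-1 \in (r^i)$, then inherit the weight-preserving bijection property from Lemmas~\ref{lem:wtBijRC} and~\ref{lem:wtBijection}. Your additional sanity checks on the block structure and cutoff condition are fine but not logically required, and the paper omits them.
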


\begin{proof}
   We claim that $\phi = \phi_1 \circ \phi_2$, where $\phi_1$ and $\phi_2$ are the weight-preserving bijections from Lemmas \ref{lem:wtBijRC} and \ref{lem:wtBijection}, respectively. Let $(i,j) \in D_+$, and suppose that $(i,j)$ is the $k^{\text{th}}$ cross, enumerating the rows of $D$ from top to bottom, recording from right to left within each row, as in Lemma \ref{lem:wtBijRC}. Then $\phi_1(D)$ builds a reduced word $\bfa = a_1 \cdots a_p \in R(w)$ such that $a_k = i+j-1$, with compatible sequence $\beta \in C(\bfa)$ such that $\beta_k = i$. Now applying $\phi_2$ from Lemma \ref{lem:wtBijection}, we place $a_k = i+j-1 \in (r^i)$ since $\beta_k = i$. Therefore, $\phi(i,j) = \phi_2\circ \phi_1(i,j)$ for all $(i,j) \in D_+$, and so  $\phi = \phi_1 \circ \phi_2$ and is thus also a weight-preserving bijection. 
\end{proof}

The bijection $\phi$ is presented in Proposition \ref{prop:wtBijPD} as a correspondence between individual crosses in a pipe dream and single letters in the associated reduced factorization.  Putting this information together yields a convenient algorithm for going directly between the full graphical representation of a pipe dream and the entire reduced factorization.

\begin{corollary}\label{cor:PDtoRFCwhole}
Let $w \in S_n$.  Given any $D \in RP(w)$, the image $\phi(D)$ is obtained as follows:
\begin{enumerate}
    \item Shift all crosses in $D_+$ in row $i$ to the right by $i-1$ columns to obtain $D'$.
    \item Define block $(r^i)$ by reading the columns of crosses in row $i$ of $D_+'$ from left to right.
    \item Then $\phi(D) = (r^{n-1}) \cdots (r^1)$.
\end{enumerate}
\end{corollary}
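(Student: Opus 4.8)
The plan is to show that the three-step procedure in the corollary is just a repackaging of the cross-by-cross description of $\phi$ already established in Proposition \ref{prop:wtBijPD}. Since $\phi$ sends $(i,j) \in D_+$ to the letter $i+j-1$ placed in block $(r^i)$, the task reduces to verifying that steps (1)--(3) produce exactly the multiset of letters $\{i+j-1 : (i,j) \in D_+\}$ distributed into blocks by row index, and that within each block the letters come out strictly increasing so that the result is a genuine RFC.

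First I would observe that shifting a cross at $(i,j)$ to the right by $i-1$ columns moves it to column $j + i - 1$, so the columns of crosses appearing in row $i$ of $D'$ are precisely the integers $i+j-1$ ranging over crosses $(i,j)$ in row $i$ of $D$. By step (2), block $(r^i)$ is then the sequence of these column values read left to right, which is exactly the set $\{i+j-1 : (i,j) \in D_+ \text{ in row } i\}$ listed in increasing order. Comparing with Proposition \ref{prop:wtBijPD}, this is precisely the content of block $(r^i)$ in $\phi(D)$, and step (3) assembles the blocks in the order $(r^{n-1})\cdots(r^1)$ dictated by the numbering convention from right to left. Hence the procedure computes $\phi(D)$.

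There are two small points of rigor to address. One is that the column values $i+j-1$ for distinct crosses in the same row $i$ are themselves distinct (immediate, since the crosses occupy distinct columns $j$) and that reading them "left to right" in $D'$ yields them in strictly increasing order; this is automatic because the rightward shift by $i-1$ is the same for every cross in row $i$, hence preserves the left-to-right order of columns. This guarantees that each block of the output is strictly increasing, matching condition (i) in the definition of RFC. The second is that crosses only occur in boxes with $i + j \le n$, so $i + j - 1 \le n - 1$; after the shift the columns still lie within an admissible range, and the blocks are indexed $1$ through $n-1$ as required — no crosses are pushed past the grid in a way that would break the block structure. The cutoff condition (ii) and the fact that $\phi(D) \in RFC(w^{-1})$ are already guaranteed by Proposition \ref{prop:wtBijPD}, so nothing further is needed there.

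I expect no serious obstacle here: the corollary is essentially a bookkeeping translation, and the only thing requiring care is making the identification between "the $k^{\text{th}}$ cross recorded right to left within each row" from the proof of Proposition \ref{prop:wtBijPD} and the "read columns left to right" prescription of step (2) — these orderings within a row are reverses of each other, but since the underlying set of column values is the same and it is then sorted into increasing order to form the block, the discrepancy is immaterial. The proof can therefore be written in a few lines by directly invoking Proposition \ref{prop:wtBijPD} and checking that steps (1)--(3) reproduce its output.
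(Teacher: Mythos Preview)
Your proposal is correct and takes essentially the same approach as the paper, which simply states that the algorithm follows directly from the formula in Proposition~\ref{prop:wtBijPD}. Your version supplies considerably more detail than the paper bothers with, but the underlying idea---that the shift by $i-1$ converts column $j$ to $i+j-1$, matching the block-by-block description of $\phi$---is identical.
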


\begin{proof}
    This algorithm follows directly from the formula provided in Proposition \ref{prop:wtBijPD}.
\end{proof}

We provide an example to explain Corollary \ref{cor:PDtoRFCwhole} below. 

\begin{example}
Let $w = [21543]$, and consider $D_+ =\{(1,1),(1,4),(2,2),(3,2)\}$. In step (1) of constructing $\phi(D)$, we shift the cross in row 2 to the right by 1, and the cross in row 3 to the right by 2. In step (2), we build block $(r^1)$ by reading off the column indices $(14)$ from row 1. Looking at column indices of the shifted crosses, we build block $(r^2)$ by recording the shifted column index $(3)$, and similarly, $(r^3) = (4)$. Altogether, $\phi(D) = (\;)(4)(3)(14)$ by Corollary \ref{cor:PDtoRFCwhole}.
\end{example}

\subsection{Intertwining RFCs and pipe dreams}

Beyond having a weight-preserving bijection $\phi$ between the set of pipe dreams and reduced factorizations with cutoff for a given permutation, to verify the Demazure crystal structure claimed in Theorem \ref{thm:main}, we also require that $\phi$ preserves the underlying crystal operations.  We show in Lemma \ref{lem:wtPair} that $\phi$ respects the pairing process, and finally that $\phi$ preserves the raising and lowering operators in Proposition \ref{prop:intertwine}. We conclude with the proof of the Demazure crystal decomposition in Theorem \ref{thm:main}.

\begin{lemma}\label{lem:wtPair}
    Given any $w \in S_n$, the bijection $\phi: RP(w) \to RFC(w^{-1})$
preserves the pairing process on row $i$ of $D \in RP(w)$, respectively block $(r^i)$ of $r \in RFC(w^{-1})$, for all $1 \leq i < n$.
\end{lemma}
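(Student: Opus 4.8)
The plan is to unwind the bijection $\phi = \phi_1 \circ \phi_2$ from Proposition~\ref{prop:wtBijPD} at the level of the two pairing processes, exploiting Corollary~\ref{cor:PDtoRFCwhole}, which describes $\phi$ concretely: shifting the crosses in row $i$ of $D$ rightward by $i-1$ and reading off column indices produces the block $(r^i)$ of $r = \phi(D)$. The key observation is that this shift is an order-preserving bijection between the crosses of row $i$ of $D$ and the letters of $(r^i)$: a cross $(i,j) \in D_+$ corresponds to the letter $i+j-1 \in (r^i)$, and within a fixed row $i$, the rightmost cross of $D$ (largest $j$) maps to the largest letter of $(r^i)$. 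So I first set up the dictionary: the rightmost cross $c$ in row $i$ of $D$ corresponds to the largest letter $a$ in $(r^i)$, and scanning leftward among crosses in row $i$ of $D$ corresponds exactly to the step-(2) scan in Definition~\ref{def:pair-rfc} of letters in $(r^i)$ from right to left.

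\textbf{Main step: matching step (1) of the two pairing processes.} Fix a cross $c = (i,j) \in D_+$ with corresponding letter $a = i+j-1 \in (r^i)$. I must show that $c$ pairs with a cross in row $i+1$ of $D$ if and only if $a$ pairs with a letter in block $(r^{i+1})$, and moreover that the pairing partners correspond under $\phi$. The candidate partners in $D$ are crosses $(i+1,j') \in D_+$ with $j' \geq j$, i.e.\ lying weakly to the right of $c$; under the shift these map to letters $i+j'$ in $(r^{i+1})$, and the condition $j' \geq j$ translates to $i + j' \geq i + j > i + j - 1 = a$, i.e.\ to $b := i+j' > a$, which is precisely the pairing condition of Definition~\ref{def:pair-rfc}(1) for block $(r^{i+1})$. (The strictness is crucial: a cross directly below $c$, at column $j$, becomes letter $i+j > a$, and indeed "weakly to the right" in the pipe dream becomes "strictly greater" in the factorization because of the $i-1$ versus $i$ shift discrepancy between rows $i$ and $i+1$.) Finally, "leftmost such $c_+$" in row $i+1$ corresponds under the shift to "smallest such $b$" in $(r^{i+1})$, matching the choice rule in Definition~\ref{def:pair-rfc}(1). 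I would run the two pairing processes in lockstep by induction on the number of crosses in row $i$ of $D$ processed so far (equivalently, on the number of letters of $(r^i)$ processed), maintaining the inductive hypothesis that at each stage the set of already-paired crosses in row $i+1$ of $D$ corresponds under $\phi$ to the set of already-paired letters in $(r^{i+1})$; this ensures the "unpaired" qualifier in step (1) is respected at every stage. The base case is the rightmost cross / largest letter, and the inductive step is exactly the translation just described.

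\textbf{The main obstacle.} The one genuine subtlety, and the step I expect to require the most care, is the off-by-one in the column shift between consecutive rows: row $i$ is shifted by $i-1$ while row $i+1$ is shifted by $i$, so a cross $(i+1,j)$ lying in the \emph{same} column $j$ as $c = (i,j)$ — which counts as "weakly to the right" hence a valid partner in the pipe-dream pairing — maps to the letter $i+j$, which is \emph{strictly} greater than $a = i+j-1$, hence a valid partner in the RFC pairing. I need to check this boundary case matches on the nose and that no spurious or missing pairings are introduced, i.e.\ that the weak inequality $j' \geq j$ on columns corresponds bijectively to the strict inequality $b > a$ on letters, with no boundary letter lost or gained. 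Once this translation is pinned down, everything else is bookkeeping: step (2) of both processes is just "move to the next cross/letter leftward," which is visibly preserved by the order-preserving shift, and the induction closes. I would conclude by noting that since $\phi$ carries the pairing data of row $i$ of $D$ bijectively onto the pairing data of block $(r^i)$ of $r = \phi(D)$ for every $1 \leq i < n$, the lemma follows.
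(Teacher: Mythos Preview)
Your proposal is correct and follows essentially the same approach as the paper: both proceed by induction on the number of crosses in row $i$ processed so far, translate the condition ``$(i+1,j')$ lies weakly to the right of $(i,j)$'' into the strict inequality $i+j' > i+j-1$ on letters via the row-dependent shift, and verify that the leftmost eligible cross corresponds to the smallest eligible letter. Your explicit flagging of the off-by-one between the shifts in rows $i$ and $i+1$ is exactly the crux the paper handles (more tersely) in its two case analyses.
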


\begin{proof}
     We proceed by induction on the number of crosses in row $i$ of $D \in RP(w)$. As the base case, suppose there are no crosses in row $i$, in which case nothing is paired when we run the pairing process on row $i$ of $D$. Since $\phi$ bijectively maps crosses in row $i$ to letters in block $(r^i)$ of $r = \phi(D)$, then $(r^i)$ is empty and the pairing process on block $(r^i)$ of $r$ also pairs nothing.

     Now assume that we have analyzed $\ell$ crosses in the pairing process, and let $c = (i,j)$ denote the $(\ell+1)^{\text{st}}$ cross from the right in row $i$ of $D$. We first suppose that there exists a leftmost cross $c_+ = (i+1,k)$ for $k \geq j$ minimal such that $c$ pairs with $c_+$. By Proposition \ref{prop:wtBijPD}, we have $\phi(c) = i+j-1 \in (r^i)$ and $\phi(c_+) = i+k \in (r^{i+1})$. Since $j \leq k$, then note that $i+j-1 < i+k$. Moreover, no smaller unpaired letter $b \in (r^{i+1})$ such that $i+k-1 < b$ exists, since otherwise there would be a cross farther left in row $i+1$ to pair with $c$ than $c_+$ by Proposition \ref{prop:wtBijPD}. Therefore, $\phi(c)$ and $\phi(c_+)$ are paired in the pairing process on block $(r^i)$ of $r = \phi(D)$.

     If instead $c=(i,j)$ is unpaired in step $\ell+1$ of the pairing process on row $i$ of $D$, then there does not exist an unpaired cross in row $i+1$ which lies weakly right of $c$.  Denote by $a = \phi(i,j) = i+j-1$, the rightmost letter in block $(r^i)$ of $r = \phi(D)$ which remained to be analyzed in the pairing process. If there were a letter $b \in (r^{i+1})$ such that $a<b$, then $i+j-1<b$ and so $j \leq b-i$. On the other hand, $\phi^{-1}(b) = (i+1, b-i)$ by Proposition \ref{prop:wtBijPD}, which is yet unpaired by the inductive hypothesis. Since $b-i\geq j$, then $(i+1,b-i)$ is an unpaired cross in row $i+1$ which lies weakly right of $c =(i,j)$. Since $c$ is unpaired, however, then $b$ cannot exist, and $\phi(c)$ is unpaired as well.

     In summary, when crosses $c$ and $c_+$ are paired in $D$, then letters $\phi(c)$ and $\phi(c_+)$ are paired in $\phi(D)$. Similarly, if $c$ is unpaired in $D$, then $\phi(c)$ remains unpaired in $\phi(D)$. The reverse argument, starting instead with the pairing process on letters in $r = \phi(D)$, follows via the bijection $\phi^{-1}$.
\end{proof}

Our next goal is to prove that the bijection $\phi$ intertwines with the operators $e_i$ and $f_i$. In particular, we must first show that these operators are zero on corresponding elements under $\phi$.

\begin{lemma}\label{lem:zero}
    Let $w \in S_n$. Given $D \in RP(w)$ and any $1 \leq i <n$, we have \[f_i(D) = 0 \quad \Longleftrightarrow \quad f_i(\phi(D)) = 0.\]
\end{lemma}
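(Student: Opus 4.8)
The plan is to unwind the two definitions of $f_i$ — Definition \ref{def:crystalchutes} on pipe dreams and Definition \ref{def:lowering-rfc}(1) on reduced factorizations — and match their vanishing conditions term by term, using Lemma \ref{lem:wtPair} to transfer the output of the pairing process across $\phi$. Recall that $f_i(D) = 0$ occurs in exactly two situations: either (a) every cross in row $i$ of $D$ is paired, or (b) there is a leftmost unpaired cross $(i,j) \in D_+$ but $(i,k) \in D_+$ for all $1 \leq k \leq j$, i.e.\ the row is solidly filled by crosses from column $1$ through column $j$. Similarly, $f_i(r) = 0$ for $r = \phi(D)$ occurs when either (a$'$) every letter of block $(r^i)$ is paired, or (b$'$) the candidate diagram $f_i(r)$ fails the cutoff condition. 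By Lemma \ref{lem:wtPair}, $\phi$ takes the paired crosses in row $i$ of $D$ bijectively to the paired letters of $(r^i)$, so condition (a) holds for $D$ if and only if condition (a$'$) holds for $r$; this disposes of the first case immediately.

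The remaining work is to show that condition (b) for $D$ is equivalent to condition (b$'$) for $r$, \emph{given} that there is an unpaired cross/letter. Suppose first that (b) holds: the leftmost unpaired cross is $(i,j)$ and $(i,1),\dots,(i,j)$ are all crosses. Under $\phi$ these become the letters $i, i+1, \dots, i+j-1$ of block $(r^i)$, with $i+j-1$ being the smallest unpaired letter $u$ (by Lemma \ref{lem:wtPair}). Then $t = \max\{z \le u \mid z-1 \notin (r^i)\}$; since $i-1, i, \dots, i+j-2$ are consecutive and $i-1 \notin (r^i)$ (block entries are $\ge i$ by property (ii) of an RFC, and moreover the smallest entry here is exactly $i$), we get $t = i$. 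The lowering operator then removes $u = i+j-1$ from $(r^i)$ and inserts $t = i$ into $(r^{i+1})$. But the leftmost letter of the new block $(\tilde r^{i+1})$ is now $\le i$, violating the cutoff property (ii) which demands it be at least $i+1$ — unless $(r^{i+1})$ already contained a letter $\le i$, which is impossible by the cutoff condition on $r$ itself. Hence $f_i(r) \notin RFC(w^{-1})$, so $f_i(r) = 0$, giving (b$'$). Conversely, if (b) fails — so the leftmost unpaired cross is $(i,j)$ with some $(i,j-k) \notin D_+$ for $1 \le k < j$ — then the well-definedness argument inside the proof of Proposition \ref{prop:f_iwelldef} produces the integer $m$, so $f_i(D) \neq 0$; I would then check directly that the corresponding $t$ for $r$ satisfies $t \ge i+1$ (because the column-$(j-m)$ gap translates to $t - 1 = i + (j-m) - 1 \notin (r^i)$ with $t = i + (j-m) \ge i+1$ once $m < j$), and that no other RFC axiom is violated, so $f_i(r) \neq 0$, giving $\neg$(b$'$).

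The main obstacle I anticipate is the bookkeeping in this last equivalence: one must carefully translate "the rectangle strictly between the unpaired cross at $(i,j)$ and the elbow at $(i+1,j-m)$ is filled with crosses" into the arithmetic of which integers lie in the blocks $(r^i)$ and $(r^{i+1})$, and verify that the RFC-membership test $f_i(r) \in RFC(w^{-1})$ is controlled \emph{exactly} by whether $(i,1),\dots,(i,j)$ are all crosses. A clean way to organize this is to note that $f_i(r) \notin RFC(w^{-1})$ can only fail via the cutoff property (ii) at block $i$ (strict increase within blocks is automatic since we insert the \emph{smallest} relevant value, and the underlying word stays reduced for $w^{-1}$ because $\phi$ is weight-preserving and $|D_+|$ is unchanged — the permutation argument of Proposition \ref{prop:f_iwelldef} applies), and that (ii) fails precisely when $t = i$, which by the displayed definition of $t$ happens precisely when $i, i+1, \dots, u$ are all in $(r^i)$, i.e.\ precisely when $(i,1), \dots, (i,j) \in D_+$. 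Finally, I would remark that the analogous statement for the raising operators, $e_i(D) = 0 \iff e_i(\phi(D)) = 0$, follows immediately by combining this lemma with the mutual-inverse statements in Proposition \ref{prop:e_iwelldef} and the analogous fact for RFCs from Definition \ref{def:lowering-rfc}, though strictly only the $f_i$ version is needed here.
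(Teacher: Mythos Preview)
Your proposal follows essentially the same approach as the paper: split $f_i(D)=0$ into the two cases (all crosses paired, versus a solid initial segment of crosses up to the leftmost unpaired one), transfer the first case via Lemma~\ref{lem:wtPair}, and match the second case to the cutoff failure $t=i$ on the RFC side.

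The one place where your argument is thinner than the paper's is the claim that ``$f_i(r)\notin RFC(w^{-1})$ can only fail via the cutoff property.'' You justify the reduced-word part by saying ``the underlying word stays reduced for $w^{-1}$ because $\phi$ is weight-preserving and $|D_+|$ is unchanged --- the permutation argument of Proposition~\ref{prop:f_iwelldef} applies.'' This is not sufficient: Proposition~\ref{prop:f_iwelldef} shows $f_i(D)\in RP(w)$ via the pipe picture, but that says nothing directly about the word underlying the \emph{formal} object $f_i(r)$, which (when $m>1$) is obtained from the word of $r$ by deleting the letter $u$ and inserting a different letter $t$ in a different position. The paper closes this by invoking \cite[Theorem~4.10]{assaf_demazure_2018}, which guarantees that $f_i$ applied to a reduced factorization always yields a reduced factorization for the same permutation; hence only the cutoff condition can fail, and then $t=i$ is forced exactly as you compute. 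Alternatively one could first carry out the explicit computation of Proposition~\ref{prop:intertwine} on the locus $f_i(D)\neq 0$, deduce that the candidate $f_i(r)$ equals $\phi(f_i(D))\in RFC(w^{-1})$, and conclude $f_i(r)\neq 0$ --- but that inverts the logical dependence between Lemma~\ref{lem:zero} and Proposition~\ref{prop:intertwine}, so you would need to reorganize. Two very minor points: the cutoff violation occurs at block $i+1$, not block $i$; and your ``strict increase is automatic'' is correct, but the actual reason is that $(i+1,j-m)\notin D_+$ gives $t=i+j-m\notin (r^{i+1})$, not that $t$ is ``smallest.''
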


\begin{proof}
    Let $D \in RP(w)$, and denote by $r = \phi(D)$. First suppose that $f_i(D) = 0$ for some $1 \leq i < n$. By Definition \ref{def:crystalchutes}, either every cross in row $i$ is paired after running the pairing process on row $i$, or every tile to the left of the leftmost unpaired cross in row $i$ is another cross.  Since $\phi$ maps crosses in row $i$ to block $(r^i)$ of $r$ and preserves the pairing by Lemma \ref{lem:wtPair}, then in the first case, all letters in block $(r^i)$ are also paired, and $f_i(r)=0$ as well by Definition \ref{def:lowering-rfc}. 
    
    In the second case, denote by $(i,j) \in D_+$ the leftmost unpaired cross in row $i$. By hypothesis, $(i,k) \in D_+$ for all $1 \leq k <j$.  Since $(i,k) \in D_+$ for all $1 \leq k \leq j$, then we have $(r^i) = (1+i-1, 2+i-1,\dots, j+i-1, \dots) = (i,i+1,\dots, i+j-1, \dots)$. By Lemma \ref{lem:wtPair}, the smallest unpaired letter in block $(r^i)$ equals $\phi(i,j) = i+j-1$. Therefore, $t = \max \{z \leq i+j-1 \mid z-1 \notin (r^i)\}.$ Note that for all $i+1 \leq z \leq i+j-1$, we have $z-1 \in (r^i)$. Therefore, $t = i$ and $f_i(r)$ inserts $i$ into block $(r^{i+1})$, violating the cutoff condition.  By Definition \ref{def:lowering-rfc}, since $f_i(r) \notin RFC(w^{-1})$, we have $f_i(r) = 0$ in this case as well.

    Conversely, suppose that $f_i(r) = 0$ where $r \in RFC(w^{-1})$ satisfies $r = \phi(D)$ for some $D \in RP(D)$. By Definition \ref{def:lowering-rfc}, either all the letters in $(r^i)$ are paired, or $f_i(r) \notin RFC(w^{-1})$. In the first case, every cross in row $i$ is also paired by $\phi$, and so $f_i(D)=0$ as well. Now suppose there exists an unpaired letter in $(r^i)$, but $f_i(r) \notin RFC(w^{-1})$. By \cite[Theorem 4.10]{assaf_demazure_2018}, we know that $f_i(r)$ is actually a reduced factorization for $w^{-1}$, so the only possibility is that applying $f_i$ to $r$ violates the cutoff condition. As $f_i$ only inserts one new letter $t$ into block $(r^{i+1})$, it must be the case that $t=i$. By the same argument as above, under $\phi$ we have that $(i,k) \in D_+$ for all $1 \leq k <j$ where $(i,j)$ is the leftmost unpaired cross in row $i$. Therefore, $f_i(D) = 0$ by Definition \ref{def:crystalchutes}.
\end{proof}

We are now able to prove that $\phi$ preserves the lowering operators; equivalently,  $\phi$ intertwines with the crystal chute moves $f_i$.

\begin{proposition}\label{prop:intertwine}
    Given any \(w \in S_n\), the bijection
	$\phi: RP(w) \to RFC(w^{-1})$ 
preserves the lowering operators; that is, for any $D \in RP(w)$, we have $\phi(f_i(D)) = f_i(\phi(D))$
for all $1 \leq i < n$.
\end{proposition}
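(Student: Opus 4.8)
The plan is to trace through both $f_i(D)$ and $f_i(\phi(D))$ explicitly under the dictionary provided by Proposition~\ref{prop:wtBijPD}, and verify that the single cross moved on the pipe dream side corresponds exactly to the single letter moved on the RFC side. By Lemma~\ref{lem:zero}, we may assume $f_i(D) \neq 0$, so that $f_i(\phi(D)) \neq 0$ as well. Write $r = \phi(D)$, and let $(i,j) \in D_+$ be the leftmost unpaired cross in row $i$, so $f_i(D)_+ = \{D_+ \setminus (i,j)\} \cup \{(i+1, j-m)\}$ for the integer $m$ of Definition~\ref{def:crystalchutes}. By Proposition~\ref{prop:wtBijPD}, $\phi(i,j) = i+j-1 \in (r^i)$, and by Lemma~\ref{lem:wtPair}, since the pairing process is preserved, $u := i+j-1$ is the smallest unpaired letter of $(r^i)$. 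Thus $f_i(r)$ deletes $u$ from $(r^i)$ and inserts $t = \max\{z \leq u \mid z - 1 \notin (r^i)\}$ into $(r^{i+1})$.

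The heart of the argument is to show $t = (i+1) + (j-m) - 1 = i + j - m$, which is precisely $\phi(i+1, j-m)$, so that inserting $t$ into $(r^{i+1})$ matches inserting the cross $(i+1, j-m)$ into row $i+1$. First I would translate properties (a) and (b) of Definition~\ref{def:crystalchutes} through $\phi$: property (b) says $(i, j-k) \in D_+$ for all $1 \leq k < m$, hence the letters $i + (j-k) - 1 = (i+j-1) - k$ for $1 \leq k < m$ all lie in $(r^i)$; that is, $(r^i)$ contains the consecutive run $i+j-m+1,\, i+j-m+2,\, \dots,\, i+j-1 = u$. Property (a) says $(i, j-m) \notin D_+$, so $i + (j-m) - 1 = i+j-m-1 \notin (r^i)$. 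Therefore $z = i+j-m$ is the smallest element of the maximal run of consecutive integers in $(r^i)$ ending at $u$, which is exactly the characterization $t = \max\{z \leq u \mid z-1 \notin (r^i)\}$. Hence $t = i+j-m = \phi(i+1,j-m)$.

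It then remains to check that the other blocks are untouched on both sides and that the $(i+1)$-block agrees. On the pipe dream side, $f_i$ removes the cross at $(i,j)$ (corresponding to deleting $u$ from $(r^i)$, matching $(\tilde r^i) = (r^i) \setminus \{u\}$) and adds the cross at $(i+1, j-m)$ in row $i+1$ (corresponding to adjoining $t$ to $(r^{i+1})$, matching $(\tilde r^{i+1}) = (r^{i+1}) \cup \{t\}$); all crosses in rows other than $i, i+1$ are unchanged, so all blocks $(r^k)$ for $k \neq i, i+1$ are preserved by $\phi$. Since $\phi$ is determined block-by-block via the correspondence $(i,j) \mapsto i+j-1 \in (r^i)$, and the reading order within a block is by increasing column hence increasing letter (so insertion position is automatic), we conclude $\phi(f_i(D)) = f_i(\phi(D))$.

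The main obstacle I anticipate is the bookkeeping in the second paragraph: correctly identifying that the run of crosses between $(i,j)$ and $(i+1,j-m)$ guaranteed by property (b), once pushed through $\phi$, becomes precisely the maximal run of consecutive integers in $(r^i)$ below $u$ whose bottom determines $t$ — and in particular confirming that property (a) supplies exactly the ``gap'' $i+j-m-1 \notin (r^i)$ that stops the run. One should also be slightly careful that $t$ genuinely lands in the correct relative position within $(r^{i+1})$ and does not already occur there, but since $\phi(f_i(D))$ is a bona fide RFC for $w^{-1}$ by Proposition~\ref{prop:f_iwelldef} combined with the bijectivity of $\phi$, no separate check is needed.
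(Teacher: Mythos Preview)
Your proposal is correct and follows essentially the same approach as the paper: both invoke Lemma~\ref{lem:zero} to dispose of the zero case, use Lemma~\ref{lem:wtPair} to identify $u = i+j-1$ as the smallest unpaired letter in $(r^i)$, and then translate properties (a) and (b) of Definition~\ref{def:crystalchutes} through $\phi$ to pin down $t = i+j-m$. The only cosmetic difference is that the paper separates out the case $m=1$ while you handle it uniformly via the (possibly empty) consecutive run; the arguments are otherwise interchangeable.
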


\begin{proof}
    Let $D \in RP(w)$ and $1 \leq i <n$.  Recall by Lemma \ref{lem:zero} that the operator $f_i$ is zero on $D$ and $\phi(D)$ simultaneously, so we may extend the bijection by assigning $\phi(0)=0$ in a manner which preserves the raising and lowering operators.

    Now let $D \in RP(w)$, and fix any index $1 \leq i < n$ such that $f_i(D) \neq 0$. In the notation of Definition \ref{def:crystalchutes},  $f_i(D)_+ = D_+ \backslash \{ (i,j) \} \cup \{(i+1, j-m)\}$, where $(i,j)\in D_+$ is the leftmost unpaired cross in row $i$ after running the pairing process on row $i$ of $D$.

    The cross $(i,j)$ that is removed in applying $f_i$ to $D$ corresponds to the letter $\phi(i,j) = i+j-1 \in (r^i)$ in the $i^{\text{th}}$ block of $r = \phi(D) \in RFC(w^{-1})$ via Proposition \ref{prop:wtBijPD}. By Lemma \ref{lem:wtPair}, we know that $i+j-1$ is also the smallest unpaired letter in block $(r^i)$. Therefore, $u =i+j-1$ in the notation of Definition \ref{def:lowering-rfc}, and so applying $f_i(r)$ removes the letter $u = i+j-1 = \phi(i,j)$ from block $(r^i)$.

    Now consider the cross that is added in applying $f_i$ to $D$; namely $(i+1,j-m) \notin D_+$.  We must show that $\phi(i+1,j-m) = (i+1)+(j-m)-1 = i+j-m$ is the letter which gets inserted into block $(r^{i+1})$ when applying $f_i(r)$.  Recall from Definition \ref{def:lowering-rfc} that $t = \max \{z \leq i+j-1 \mid z-1 \notin (r^i) \}.$ As $m \in \N$, we have $i+j-m \leq i+j-1$.  By property (a) in Definition \ref{def:crystalchutes}, position $(i,j-m)$ of $D$ is required to be an elbow tile. By Proposition \ref{prop:wtBijPD}, we thus have $\phi(i,j-m) = (i+j-m)-1 \notin (r^i)$, and so $t \geq i+j-m$. 
    
    If $m=1$, then since we also have $t \leq i+j-1$, we see that $t=i+j-m$ in this case. If $m >1$, then for any $1 \leq k < m$, the cross in position $(i,j-k) \in D_+$ guaranteed by property (b) of Definition \ref{def:crystalchutes} corresponds to $\phi(i,j-k) = i+j-k-1 \in (r^i)$ via Proposition \ref{prop:wtBijPD}. Therefore, $i+j-m =  \max \{z \leq i+j-1 \mid z-1 \notin (r^i) \}$ in case $m > 1$ as well. Applying $f_i(r)$ thus inserts the letter $t = i+j-m = \phi(i+1,j-m)$ into block $(r^{i+1})$. 
    
    Altogether, we have shown that corresponding crosses and letters under $\phi$ are added and removed when applying $f_i$ to pipe dreams or reduced factorizations with cutoff, and so $\phi(f_i(D)) = f_i(\phi(D))$ whenever the operator $f_i$ is nonzero.
\end{proof}

From the previous several results on lowering operators, we can derive the analogous statements for raising operators.

\begin{corollary}\label{cor:intertwine-e}
    Let \(w \in S_n\). For any $D \in RP(w)$ and any $1 \leq i < n$, we have 
\[\phi(e_i(D)) = e_i(\phi(D)) \quad \text{and} \quad e_i(D) = 0 \ \Longleftrightarrow \ e_i(\phi(D)) = 0.\] 
\end{corollary}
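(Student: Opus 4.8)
The plan is to derive Corollary \ref{cor:intertwine-e} directly from the results already established for the lowering operators, using the mutual-inverse relationships on both sides. First I would recall that Proposition \ref{prop:e_iwelldef} establishes that on $RP(w)$ the operators $e_i$ and $f_i$ are mutually inverse, meaning $e_i(D) = D'$ if and only if $f_i(D') = D$ (with the convention $e_i(D) = 0$ exactly when $D$ is not in the image of $f_i$). On the $RFC$ side, the analogous statement follows from the construction in Definition \ref{def:lowering-rfc} and the results of \cite{MorseSchilling, assaf_demazure_2018}: the raising and lowering operators on reduced factorizations with cutoff are mutually inverse, so $e_i(r) = r'$ if and only if $f_i(r') = r$. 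With the bijection extended by $\phi(0) = 0$ as in the proof of Proposition \ref{prop:intertwine}, these two facts combined with $\phi \circ f_i = f_i \circ \phi$ from Proposition \ref{prop:intertwine} immediately give the intertwining of $e_i$.

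Concretely, the argument runs as follows. Fix $D \in RP(w)$ and $1 \leq i < n$. Suppose first that $e_i(D) = D' \neq 0$. By Proposition \ref{prop:e_iwelldef}, this is equivalent to $f_i(D') = D$. Applying $\phi$ and using Proposition \ref{prop:intertwine}, we get $f_i(\phi(D')) = \phi(f_i(D')) = \phi(D)$. By the mutual-inverse property on $RFC(w^{-1})$, this says precisely $e_i(\phi(D)) = \phi(D') = \phi(e_i(D))$, as desired. For the zero case, $e_i(D) = 0$ means $D$ is not in the image of $f_i$ restricted to $RP(w)$; by Lemma \ref{lem:inversechutes} this is equivalent to saying that after running the pairing process on row $i$ of $D$, every cross in row $i+1$ is paired. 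By Lemma \ref{lem:wtPair}, $\phi$ preserves the pairing process, and since $\phi$ bijectively sends crosses in row $i+1$ of $D$ to letters in block $(r^{i+1})$ of $r = \phi(D)$, every letter in $(r^{i+1})$ is paired, which is exactly the condition for $e_i(r) = 0$ in Definition \ref{def:lowering-rfc}. The converse direction in each case follows by the same reasoning applied to $\phi^{-1}$, or equivalently by noting that all the equivalences used are symmetric. Finally, the weight statement $\wt(e_i(D)) = \wt(D) + \alpha_i$ is already recorded in Proposition \ref{prop:e_iwelldef} and is consistent with $\wt \circ \phi = \wt$.

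I do not expect a genuine obstacle here, since this is a formal consequence of Proposition \ref{prop:intertwine} together with the mutual-inverse structure on both sides. The only point requiring a small amount of care is the handling of the zero object: one must make sure the extension $\phi(0) = 0$ is consistent with the statements of Lemma \ref{lem:zero} and Proposition \ref{prop:intertwine}, and that the "if and only if" for $e_i(D) = 0$ is handled either via Lemma \ref{lem:inversechutes} and Lemma \ref{lem:wtPair} as above, or alternatively by observing that $e_i(D) = 0 \iff D \notin \operatorname{im}(f_i|_{RP(w)})$, which transports across $\phi$ because $\phi$ is a bijection intertwining $f_i$. Either route is routine, so the corollary should follow in a few lines.
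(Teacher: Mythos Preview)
Your proposal is correct and follows essentially the same approach as the paper: both derive the $e_i$-intertwining from Proposition~\ref{prop:intertwine} together with the mutual-inverse relationship between $e_i$ and $f_i$ on each side, and both handle the zero case by transporting ``not in the image of $f_i$'' across the bijection $\phi$. One small caution about your first route for the zero case via Lemma~\ref{lem:wtPair}: Definition~\ref{def:lowering-rfc}(2) formally allows $e_i(r)=0$ also when the result violates the cutoff, so ``all letters in $(r^{i+1})$ are paired'' is not \emph{literally} the full condition without a word on why cutoff cannot fail under $e_i$; your alternative image-of-$f_i$ argument sidesteps this entirely and matches the paper's reasoning.
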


\begin{proof}
    First suppose that $e_i(D) \neq 0$, and denote by $r = \phi(D)$. By Lemma \ref{lem:inversechutes}, there exists $D' \in RP(w)$ such that $D = f_i(D')$. Therefore, $\phi(e_i(D)) = \phi(e_i(f_i(D'))) = \phi(D')$ by Proposition \ref{prop:e_iwelldef}. On the other hand, $e_i(\phi(D)) = e_i(r) = e_i(f_i(r')) = r'$ for some $r' \in RFC(w^{-1})$,
    by the inverse relationship of the operators $e_i$ and $f_i$ on RFCs. To prove the first equality, it thus suffices to verify that $\phi(D') = r'$. Note that $\phi(D) = \phi(f_i(D')) = f_i(\phi(D'))$ by Proposition \ref{prop:intertwine}, and write $\phi(D) = r = f_i(r')$.  Therefore, $f_i(r') = f_i(\phi(D'))$, and applying $e_i$ to both sides recovers $r' = \phi(D')$, as required to conclude that $\phi(e_i(D)) = e_i(\phi(D))$.  In particular, both $\phi(e_i(D))$ and $e_i(\phi(D))$ are nonzero whenever $e_i(D) \neq 0$ and vice versa, proving the second claim.
\end{proof}

We are now prepared to prove our main theorems, apart from the formula for the truncating permutation $\pi_D$, equivalently the composition $a_D$, which we prove separately as Theorem \ref{thm:AlgCor}.

\begin{proof}[Proof of Theorem \ref{thm:main}]
    The bijection $\phi: RP(w) \to RFC(w^{-1})$ intertwines with both the raising and lowering crystal chute moves by Proposition \ref{prop:intertwine} and Corollary \ref{cor:intertwine-e}, and it preserves the weight function by Proposition \ref{prop:wtBijPD}. The crystal axioms and Demazure crystal structure thus follow directly from the corresponding result \cite[Theorem 5.11]{assaf_demazure_2018} on RFCs. 
    
    See Algorithm \ref{alg:EG} for the definition of the diagram $\widetilde{D}$ and Theorem \ref{thm:AlgCor} for the claim that $\pi_D$ is the shortest permutation such that $\wt(\widetilde{D}) = \pi_D(\wt(D))$, completing the proof of Theorem \ref{thm:main}.
\end{proof}

\begin{proof}[Proof of Theorem \ref{thm:main-key}]
    This is an immediate corollary of Theorem \ref{thm:main}.
\end{proof}

%!TEX root = main.tex

\section{Permutation indexing the Demazure Crystal}\label{sec:permutation}

This section explains how to identify the truncating permutation $\pi_D$ from a highest weight pipe dream $D$. We begin by describing how to obtain a new diagram $\widetilde{D}$ from $D$ via Algorithm \ref{alg:EG}. The remainder of the paper is devoted to proving that the weight of this new diagram $\widetilde{D}$ indexes the key polynomial corresponding to the pair $(\pi_D, \wt(D))$ from Theorem \ref{thm:main-key}.

\subsection{Truncating permutation from a highest weight pipe dream}

We now define a process for determining the truncating permutation $\pi_D$ from any $D \in RP(w)$ such that $D$ is highest weight; see Theorem \ref{thm:AlgCor}. The outcome produces a shape that can be described by a weak composition, given by the following algorithm.

\begin{alg}\label{alg:EG}
Let $D$ be a highest weight pipe dream.
\begin{enumerate}
    \item For each cross in row $i$, shift it to the right by $i-1$.
    \item For each row, beginning in the lowest row, move the leftmost cross down to the row such that its row and column index match. Fix these crosses.
    \item Set $\ell = 2$. 
        \begin{enumerate}
        \item Beginning at the bottom row containing unfixed crosses, consider the leftmost unfixed cross. Move that cross down to the lowest possible row, remaining in its current column, such that:
        \begin{enumerate}
            \item The cross may not move through other crosses;
            \item The cross is the $\ell^\text{th}$ cross from the left in its new row; and
            \item The cross does not have any previously fixed crosses to its right in the new row.
        \end{enumerate}
        \item Fix this moved cross.
        \item Repeat steps (a) and (b) once within each row until all rows with unfixed crosses have been considered.
    \end{enumerate}
    \item Increment $\ell$ by 1 and repeat step (3).
\end{enumerate}
Once all crosses in the diagram are fixed, the algorithm terminates. Denote the resulting diagram by $\widetilde{D}$.
\end{alg}

The weight of the resulting diagram $\widetilde{D}$ is the weak composition whose $i^{\text{th}}$ coordinate equals the number of crosses in row $i$ of $\widetilde{D}$. We first illustrate Algorithm~\ref{alg:EG} on an example; see Proposition \ref{prop:AlgWD} below for well-definedness.

\begin{example}\label{ex:alg}
    Consider the permutation $w = [4726315] \in S_7$. One of its highest weight pipe dreams $D$ is depicted in Figure \ref{fig:highestWeights}.      The result after applying steps (1) and (2) of Algorithm \ref{alg:EG} to $D$ is shown in Figure \ref{fig:crossesShifted}, with fixed crosses displayed in red. 

\begin{figure}[ht]
\centering
\begin{minipage}{.5\textwidth}
  \centering
  \includegraphics[width=.8\linewidth]{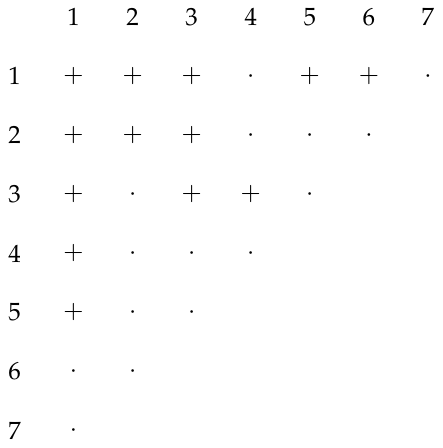}
  \captionof{figure}{A highest weight pipe dream $D$ for the permutation $w = [4726315] \in S_7$.}
  \label{fig:highestWeights}
\end{minipage}%
\begin{minipage}{.5\textwidth}
  \centering
  \includegraphics[width=.8\linewidth]{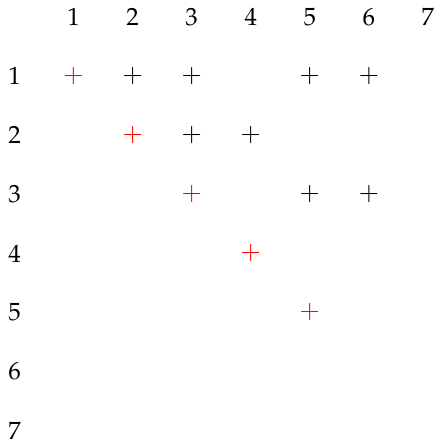}
  \captionof{figure}{The result of applying steps (1) and (2) of Algorithm \ref{alg:EG} to $D$; fixed crosses are red.}
  \label{fig:crossesShifted}
\end{minipage}
\end{figure}

     We now move to the iterative step (3). Set $\ell = 2$. We begin on the lowest row with an unfixed cross, that being row $3$. We move the leftmost unfixed cross in this row, that being the cross at $(3,5)$, down in its column to a position that meet criteria (i) through (iii). We first observe that there is a cross at $(5,5)$, meaning that we are unable to move our cross to row $5$ or any row below it. Our only option is to move this cross to row $4$. Observe that a cross at $(4,5)$ would be the second cross in its row. Thus, we move the cross at $(3,5)$ to $(4,5)$ and fix it there.

    The two crosses at $(2,3)$ and $(1,2)$ cannot move lower without violating (i).  These two crosses are thus also fixed, completing the round of moves for $\ell=2$.
     At the end of this round, we obtain the diagram shown in Figure \ref{fig:ellIs2}. We then increment $\ell$ to $3$, and repeat the process. The final result $\widetilde{D}$ is shown in Figure \ref{fig:ellIs5}.

\begin{figure}[h]
\centering
\begin{minipage}{.5\textwidth}
  \centering
  \includegraphics[width=.8\linewidth]{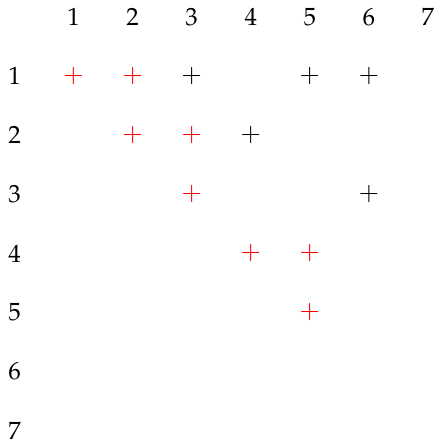}
  \captionof{figure}{The diagram after completing the first iteration of step (3); fixed crossed are red.}
  \label{fig:ellIs2}
\end{minipage}%
\begin{minipage}{.5\textwidth}
  \centering
  \includegraphics[width=.8\linewidth]{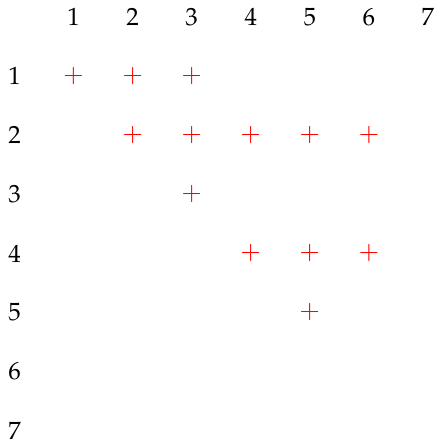}
  \captionof{figure}{The diagram $\widetilde{D}$ after completing Algorithm \ref{alg:EG}, with resulting $\wt(\widetilde{D}) = (3,5,1,3,1,0)$.}
  \label{fig:ellIs5}
\end{minipage}
\end{figure}

\end{example}

Our next goal is to prove that Algorithm \ref{alg:EG} is well-defined. We first record a helpful lemma that will be used several times within the argument.

\begin{lemma}\label{lem:HWPDstep1}
     Let $D \in RP(w)$ be a highest weight pipe dream. Let $c = (i,j^i_\ell) \in D_+$ denote the $\ell^{\text{th}}$ cross from the left in row $i$ of $D$. Then for any fixed $1 \leq \ell \leq \wt(D)_1$, the shifted column sequence $(j^i_\ell + i-1)$ is strictly increasing.
\end{lemma}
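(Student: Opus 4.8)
Lemma \ref{lem:HWPDincseq} already tells us that the unshifted column sequence $(j^i_\ell)$ is weakly increasing in $i$ for each fixed $\ell$. The plan is to upgrade this to strict increase for the shifted sequence $(j^i_\ell + i - 1)$. Since the shift increases by exactly $1$ each time $i$ increases by $1$, it suffices to show that $j^i_\ell + i - 1 < j^{i+1}_\ell + i$, i.e.\ that $j^i_\ell \le j^{i+1}_\ell + 1$ always holds, and additionally to handle the case $j^i_\ell = j^{i+1}_\ell + 1$, which the weak inequality from Lemma \ref{lem:HWPDincseq} already forbids: indeed Lemma \ref{lem:HWPDincseq} gives $j^i_\ell \le j^{i+1}_\ell$, so $j^i_\ell + i - 1 \le j^{i+1}_\ell + i - 1 < j^{i+1}_\ell + i$, and we are done. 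So at first glance this looks nearly immediate.

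\textbf{The subtlety.} The catch is that $\ell$ must not exceed the number of crosses in the relevant rows for $(j^i_\ell)$ and $(j^{i+1}_\ell)$ to both be defined. The statement quantifies over $1 \le \ell \le \wt(D)_1$, where $\wt(D)_1$ is the number of crosses in the \emph{top} row. By Lemma \ref{lem:HWPD}, $\wt(D)$ is a partition, so $\wt(D)_1 \ge \wt(D)_i$ for all $i$; hence if $\ell \le \wt(D)_1$ there may be rows with fewer than $\ell$ crosses, in which case $j^i_\ell$ is undefined. Thus I would first clarify the convention: the claim should be read as asserting strict monotonicity along the subsequence of rows $i$ for which row $i$ actually contains at least $\ell$ crosses. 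Since $\wt(D)$ is a partition, these are exactly rows $1, 2, \dots, r$ for some $r = r(\ell)$ (an initial segment), so "strictly increasing" is unambiguous. I would state this convention explicitly at the start of the proof, then the argument of the previous paragraph applies verbatim to consecutive rows $i, i+1$ both having $\ge \ell$ crosses.

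\textbf{Steps in order.} (1) Fix $1 \le \ell \le \wt(D)_1$ and invoke Lemma \ref{lem:HWPD} to note $\wt(D)$ is a partition, so the set of rows with at least $\ell$ crosses is an initial segment $\{1, \dots, r\}$. (2) For consecutive such rows $i$ and $i+1$, apply Lemma \ref{lem:HWPDincseq} to get $j^i_\ell \le j^{i+1}_\ell$. (3) Add $i-1$ to the left side and $i$ to the right: $j^i_\ell + (i-1) \le j^{i+1}_\ell + (i-1) < j^{i+1}_\ell + i$, giving strict increase of the shifted sequence. (4) Conclude by transitivity along $1, \dots, r$.

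\textbf{Main obstacle.} There is no real mathematical obstacle here — the content is entirely carried by Lemmas \ref{lem:HWPD} and \ref{lem:HWPDincseq}, and the "work" is just the bookkeeping observation that a $+1$ shift turns a weak inequality into a strict one. The only thing requiring care is making the indexing convention for $\ell$ precise, so that the statement is meaningful in rows that do not contain an $\ell^{\text{th}}$ cross; once that is pinned down using the partition property, the proof is a two-line computation.
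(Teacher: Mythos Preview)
Your proof is correct and follows essentially the same approach as the paper: invoke Lemma \ref{lem:HWPDincseq} for the weak inequality $j^i_\ell \le j^{i+1}_\ell$, then observe that the $+1$ increment in the shift upgrades this to strict. Your additional care with the indexing convention (using Lemma \ref{lem:HWPD} to ensure the rows containing an $\ell^{\text{th}}$ cross form an initial segment) is a clarification the paper leaves implicit, but the core argument is identical.
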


\begin{proof}
   Since the column sequence $(j^i_\ell)$ is weakly increasing by Lemma \ref{lem:HWPDincseq}, after all crosses in row $i$ are shifted to the right by $i-1$, the resulting column sequence $(j^i_\ell + i-1)$ is strictly increasing.
\end{proof}

\begin{proposition}\label{prop:AlgWD}
    Let $D \in RP(w)$ be a highest weight pipe dream for some $w \in S_n$.  The diagram $\widetilde{D}$ from Algorithm \ref{alg:EG} is well-defined. Moreover, $\wt(\widetilde{D}) = \pi(\wt(D))$ for some permutation $\pi \in S_n$.
\end{proposition}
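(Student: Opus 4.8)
The plan is to analyze Algorithm \ref{alg:EG} step by step, tracking the column positions of the crosses after the initial shift, and showing that every move prescribed by step (3) is unambiguous. First I would set up notation: let $D$ be highest weight, and after step (1), for each $1\le \ell \le \wt(D)_1$, let $c^i_\ell$ denote the $\ell^{\text{th}}$ cross from the left in row $i$, which by Lemma \ref{lem:HWPDstep1} now sits in column $j^i_\ell + i - 1$, with this shifted column sequence \emph{strictly} increasing in $i$. The key structural observation is that after the shift, two distinct crosses never share a column unless they are the $\ell^{\text{th}}$ crosses of their respective rows for the same $\ell$; moreover within a fixed $\ell$, crosses in lower rows lie strictly to the right. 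This is the backbone that makes the downward moves in step (3) well-defined.

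Next I would argue well-definedness by induction on $\ell$ (the outer loop variable), and within each $\ell$, induction on rows processed from bottom to top. For $\ell = 1$ (step (2)): the leftmost cross $c^i_1$ in row $i$ sits in shifted column $j^i_1 + i - 1 \ge i$ (since $j^i_1 \ge 1$); I must check we can always slide it down to the row equal to its column index without passing through another cross. Here I would use that the shifted column sequence for $\ell=1$ is strictly increasing, so processing from the bottom up, the target rows are distinct and the crosses being moved never collide. For the inductive step with $\ell \ge 2$: when we reach the leftmost unfixed cross in the current (lowest remaining) row, criteria (i)--(iii) must determine a unique target row. Criterion (i) (cannot pass through crosses) bounds how far down it can go; I would show, using that all previously fixed crosses in lower rows that share this column are fixed in positions consistent with the strictly-increasing shifted-column structure, that there is a well-defined lowest admissible row, and that in that row the moved cross is automatically the $\ell^{\text{th}}$ from the left with no fixed cross to its right — so criteria (ii) and (iii) are satisfied rather than imposing a further constraint. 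The crux is verifying that the three criteria are mutually consistent (never contradictory, never ambiguous); I expect to lean on Lemma \ref{lem:HWPDincseq} and Lemma \ref{lem:HWPDstep1} repeatedly, together with the fact that no cross ever moves \emph{up} or changes column, so the total multiset of columns is preserved throughout the algorithm.

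Finally, for the second claim $\wt(\widetilde D) = \pi(\wt(D))$ for some $\pi \in S_n$: since $|\widetilde D_+| = |D_+| = \ell(w)$ (no cross is ever created or destroyed), $\wt(\widetilde D)$ is a weak composition of $\ell(w)$ with $n$ parts. It suffices to show $\wt(\widetilde D)$ is a rearrangement of $\wt(D)$, equivalently that $\wt(\widetilde D)$ and $\wt(D)$ have the same multiset of parts. I would establish this by exhibiting, for each $\ell$, a bijection between the crosses labelled $\ell$ (i.e., the $\ell^{\text{th}}$-from-left crosses across rows) before and after the algorithm that respects a certain row-to-row correspondence — more precisely, the algorithm permutes which rows contain how many crosses, since each original row's crosses are redistributed but the key invariant is that within each column-class the crosses occupy a set of rows of the same cardinality. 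The cleanest route is: the map sending the original content of row $i$ to its image is realized by a permutation of the row indices (each cross stays in its column-class, and within a column-class the crosses are reshuffled to strictly increasing rows), and this permutation of rows induces exactly a permutation $\pi$ of the coordinates of the weight vector. The main obstacle I anticipate is the bookkeeping in step (3): proving rigorously that the three criteria (i)--(iii) together always leave exactly one legal target row (existence and uniqueness simultaneously), since a careless argument could produce a row where no position is legal, or two incomparable legal rows. I would isolate this as the central lemma and prove it by the nested induction described above, invoking the reducedness of $D$ (hence of the intermediate diagrams, via the preserved cross count) whenever I need to rule out two pipes crossing twice.
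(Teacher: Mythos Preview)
Your proposal has a genuine gap: the structural claim you identify as ``the backbone'' is false. You assert that after step (1), two distinct crosses never share a column unless they are the $\ell^{\text{th}}$ crosses of their respective rows for the same $\ell$. But take a highest weight pipe dream with crosses $(1,1),(1,2),(1,3)$ in row 1 and $(2,1),(2,2)$ in row 2 (one checks easily that all crosses in row 2 are paired, so $e_1 = 0$). After shifting, row 1 has crosses in columns $1,2,3$ and row 2 has crosses in columns $2,3$; column 2 now contains row 1's second cross ($\ell=2$) and row 2's first cross ($\ell=1$). So crosses with different $\ell$ do share columns, and your downstream argument --- that the lowest row permitted by criterion (i) automatically satisfies (ii) and (iii), and that the weight claim follows from a ``column-class'' bijection --- collapses.

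What the paper does instead is carry a different invariant through the same nested induction (on $\ell$, then bottom-to-top within each $\ell$): it proves simultaneously that each cross $c_k$ can be placed and that the \emph{relative row ordering of the images $\tilde c_k$ is preserved within each iteration index $\ell$}. Concretely, the paper shows that $c_k$ can always slide down at least as far as the row of $\tilde c'_k$ (the image of the cross immediately to its left in the original row of $D$), because any obstruction in its column lies strictly below $\tilde c'_k$ --- this uses Lemma \ref{lem:HWPDstep1} applied to the $(\ell-1)^{\text{st}}$ column sequence, not a column-disjointness statement. The row-order invariant then handles criterion (iii) and, combined with criterion (ii), yields the weight-permutation claim: for each $\ell$, the number of rows receiving an $\ell^{\text{th}}$ cross equals the number of rows of $D$ having an $\ell^{\text{th}}$ cross. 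Your mention of invoking reducedness of intermediate diagrams is also misplaced --- after the shift in step (1), the diagram is no longer a pipe dream, so ``two pipes crossing twice'' has no meaning there.
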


\begin{proof}
    Let $D \in RP(w)$ be a highest weight pipe dream. Throughout the proof, given a cross $c$, denote by $\tilde{c}$ the image of $c$ after it is fixed by the algorithm, and denote by $c'$ the closest cross to the left of $c$ in its original row. Denote by $\tilde{c}' = (\tilde{c})'$ the cross to the immediate left of $\tilde{c}$ in its original row. Denote the row index of $c$ recorded from the top down as $\row(c)$.

   By Lemma \ref{lem:HWPDstep1} with $\ell=1$, the leftmost cross in each row has a distinct column index after applying step (1) of the algorithm, and thus we can freely move each of these crosses down within their own columns to apply step (2).  In addition, since the shifted column sequence $(j_1^i+i-1)$ is strictly increasing by Lemma \ref{lem:HWPDstep1}, and these crosses are moved down within the same column to row $(j_1^i+i-1)$, then the relative row ordering of crosses with $\ell=1$ is preserved by the algorithm.
   
   In step (3), we must show that it is always possible to move the cross under consideration down within the same column, subject to the three criteria articulated in step (3a). Let $k$ enumerate those crosses that are fixed in steps (3) and (4) of the algorithm, in the order in which they get fixed; that is, $k$ enumerates from the bottom up those crosses having iteration index $\ell \geq 2$. We proceed by induction on $k$ to prove that the $k^{\text{th}}$ cross $c_k$ can be placed in a manner consistent with properties (i) through (iii). We simultaneously argue by induction on $k$ that $\row(\tilde{c}_{k+1}) < \row(\tilde{c}_k)$ for all $k \in \N$ such that $c_k$ and $c_{k+1}$ have the same iteration index; equivalently by Lemma \ref{lem:HWPD}, we show that the algorithm preserves the relative row ordering of crosses with the same iteration index. 

    The base case for placing the cross $c_1$ according to the algorithm concerns the lowest cross which is second in its original row in $D$. By Lemma \ref{lem:HWPDstep1}, the column sequence for those crosses previously fixed in step (2) is strictly increasing. Since in step (2) these crosses move downward such that their row and column index match, then any cross below $c_1$ and in the same column must lie in rows strictly below the fixed cross $\tilde{c}'_1$. (For example, $c_1=(3,5)$ in Figure \ref{fig:crossesShifted}. The fixed cross below $c_1$ at position $(5,5)$ lies in a row strictly below the fixed cross $\tilde{c}'_1 = (3,3)$.) In particular, $c_1$ can move freely downward within its column, at least as far down as to rejoin $\tilde{c}'_1$ in order to become the second cross in its new row (for example, $c_1 = (3,5)$ moves to $\tilde{c}_1 = (4,5)$ in step (3a) in Figure \ref{fig:crossesShifted}, which is in a row below $\tilde{c}'_1 = (3,3)$). Moving $c_1$ in this manner preserves properties (i), (ii), and (iii), and thus verifies the base case for this claim.

    We now verify the base case $\row(\tilde{c}_2) < \row(\tilde{c}_1)$ for the second claim. Recall that $c_1$ has already been fixed to obtain $\tilde{c}_1$. By the same argument, any cross below $c_2$ and in the same column must lie in a row strictly below the fixed cross $\tilde{c}'_2$, so that $c_2$ can move freely down its column, at least as far down as to join $\tilde{c}'_2$ to become the second cross in its new row. Moving $c_2$ in this manner clearly satisfies (i) and (ii). Denote by $\row(\tilde{c}_2) = m$ and $\row(\tilde{c}_1) = q$, in which case there are crosses on the diagonal in positions $(m,m)$ and $(q,q)$ from step (2) and property (ii). If $q< m$, then in fact $c_1$ could have moved farther down into row $m$ without moving through any crosses, based on the relative row and column ordering of crosses with iteration index $\ell=1$, and the fact that $c_1$ is the lowest cross with index $\ell=2$. Since $\tilde{c}_1$ moves as low as possible by the algorithm, then $\row(\tilde{c}_2) = m \leq q = \row(\tilde{c}_1)$. In order that $\tilde{c}_2$ satisfies (ii), then additionally $m \neq q$. Therefore, $\row(\tilde{c}_2) < \row(\tilde{c}_1)$, confirming the base case for this inequality.

     For the inductive step, assume that crosses $c_1$ through $c_k$ have been successfully placed by the algorithm. Further assume that $\row(\tilde{c}_j) < \row(\tilde{c}_{j-1})$ for all $2 \leq j \leq k$ such that $c_{j-1}$ and $c_j$ have the same iteration index. There are two cases to consider in the inductive step: either crosses $c_k$ and $c_{k+1}$ have the same iteration index or they do not. If $c_k$ has iteration index $\ell-1$ and $c_{k+1}$ has iteration index $\ell$, then the same argument supplied in the base case, instead applying Lemma \ref{lem:HWPDstep1} to the $(\ell-1)^{\text{st}}$ column sequence, shows that the first cross having iteration index $\ell$, namely $c_{k+1}$, can be placed in a manner satisfying (i), (ii), and (iii). The claim concerning the row indices does not apply in case $c_k$ and $c_{k+1}$ have different iteration indices.

     Now suppose that crosses $c_k$ and $c_{k+1}$ have the same iteration index $\ell \geq 2$. Recall that $c'_k$ and $c'_{k+1}$ are the $(\ell-1)^{\text{st}}$ crosses in the original rows of $D$ with $c_k$ and $c_{k+1}$, respectively. The same argument supplied in the base case showing that $\row(\tilde{c}_2) < \row(\tilde{c}_1)$, here instead applying Lemma \ref{lem:HWPDstep1} to the $(\ell-1)^{\text{st}}$ column sequence, shows that $c_k$ and $c_{k+1}$ can move freely down their columns, at least as far down as to rejoin $\tilde{c}'_k$ and $\tilde{c}'_{k+1}$, respectively. Note that this already implies that $c_{k+1}$ can move downward in a manner preserving (i) and (ii). Moreover, the inductive hypothesis implies that $\row(\tilde{c}'_{k+1}) < \row(\tilde{c}'_k)$, and so the same argument as in the base case proves that $\row(\tilde{c}_{k+1}) < \row(\tilde{c}_k)$, verifying the claimed inequality on row indices.  Therefore, $c_{k+1}$ can indeed be placed in a manner respecting criteria (iii) of the algorithm.

    Altogether, we have thus shown that all crosses of $D$ can be placed in a manner which satisfies criteria (i), (ii), and (iii) from step (3a) of the algorithm, provided that we enumerate the crosses beginning at the bottom row, ordered by iteration index $\ell$ as specified in step (4).  Therefore, Algorithm \ref{alg:EG} indeed produces a well-defined diagram $\widetilde{D}$.

    Finally, recall that crosses which are placed in step $\ell$ of the algorithm correspond to the $\ell^{\text{th}}$ cross in their original row in $D$. In steps (1) and (2), first crosses in rows of $D$ become first crosses in each row of $\widetilde{D}$.  By criterion (ii) of step (3a), crosses that were in position $\ell \geq 2$ remain the $\ell^{\text{th}}$ crosses from the left within their new rows in $\widetilde{D}$. Therefore, the weight of $\widetilde{D}$ is simply a permutation of the weight of the original pipe dream $D$.
\end{proof}

The truncating permutation $\pi_D$ from Theorem \ref{thm:main} is obtained from the diagram $\widetilde{D}$ as follows.

\begin{theorem}\label{thm:AlgCor}
Let $D \in RP(w)$ be a highest weight pipe dream for $w \in S_n$. Then $\pi_D \in S_n$ from Theorem \ref{thm:main} is the unique shortest permutation such that $\operatorname{wt}(\widetilde{D}) = \pi_D(\wt(D))$.  In addition, the composition $a_D = \wt(\widetilde{D})$ in Theorem \ref{thm:main-key} indexes the key polynomial corresponding to the pair $(\pi_D, \wt(D))$. 
\end{theorem}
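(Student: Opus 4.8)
The plan is to transport the statement across the bijection $\phi: RP(w) \to RFC(w^{-1})$ of Proposition \ref{prop:wtBijPD} and reduce it to the corresponding statement in Theorem \ref{thm:main-rfc}, where the truncating permutation $\pi_r$ is described via the weak Edelman--Greene insertion tableau. Concretely, given a highest weight pipe dream $D$, Corollary \ref{cor:intertwine-e} guarantees that $r := \phi(D)$ is a highest weight factorization, so Theorem \ref{thm:main-rfc} already tells us that $\pi_D = \pi_r$ is the shortest permutation sorting the weight $a = (a_1, \dots, a_n)$ of the weak Edelman--Greene insertion tableau $P(r)$, in the sense that $\wt(r) = (a_{\pi_1}, \dots, a_{\pi_n})$. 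Since $\phi$ is weight-preserving, $\wt(r) = \wt(D)$, so it suffices to prove the single identity $\wt(\widetilde{D}) = a$, i.e.\ that the diagram $\widetilde{D}$ produced by Algorithm \ref{alg:EG} has exactly as many crosses in row $k$ as the number of boxes in row $k$ of the weak Edelman--Greene insertion tableau of $\phi(D)$. Once this is established, Proposition \ref{prop:AlgWD} already records that $\wt(\widetilde{D})$ is a permutation of $\wt(D)$, and the uniqueness of the shortest such permutation is the standard fact recalled in Section \ref{sec:pipedreams}; combined with the statement in Section \ref{sec:mainthm} that the character of $B_{\pi}(\lambda)$ is the key polynomial $\kappa_{\pi(\lambda)}$, this immediately gives that $a_D = \wt(\widetilde{D}) = \pi_D(\wt(D))$ indexes the relevant key polynomial.

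The heart of the argument is therefore the combinatorial claim that Algorithm \ref{alg:EG} computes the row-shape of the weak Edelman--Greene insertion tableau of $\phi(D)$. First I would unwind $\phi(D)$ via Corollary \ref{cor:PDtoRFCwhole}: block $(r^i)$ is obtained by shifting the crosses of row $i$ rightward by $i-1$ and reading off column indices left to right, which is precisely step (1) of Algorithm \ref{alg:EG} followed by ``read each row as an increasing word.'' Thus the reduced word underlying $r$, read in the RFC order (blocks right to left, left to right within a block), is exactly the word one gets by scanning the shifted diagram $D'$ from the bottom row upward, left to right within each row. I would then run (weak) Edelman--Greene/column insertion on this word and track, for each letter, which row of the insertion tableau it ultimately lands a box in. The key observation to prove is: the letter coming from the $\ell^{\text{th}}$ cross of row $i$ (equivalently, the $\ell^{\text{th}}$ shifted column value in block $(r^i)$) contributes a box to row $\ell$ of $P(r)$ \emph{precisely when} Algorithm \ref{alg:EG} places that cross into the $\ell^{\text{th}}$-column position of some final row of $\widetilde{D}$. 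Lemma \ref{lem:HWPDincseq} (weak increase of the $\ell^{\text{th}}$-column sequence) and Lemma \ref{lem:HWPDstep1} (strict increase after shifting) are exactly the hypotheses that make the insertion behave predictably: because $D$ is highest weight, the letters feeding into ``row $\ell$'' of the tableau arrive in strictly increasing order and never bump one another, so they accrue one box per row of $D$ that has an $\ell^{\text{th}}$ cross — and the ``fix the $\ell^{\text{th}}$ cross in the lowest available row, never crossing a fixed cross, never with a fixed cross to its right'' rules of step (3a) are precisely the bookkeeping that records into which tableau-row each box is placed. I would prove this by induction on $\ell$ (outer) and on the crosses processed bottom-up (inner), mirroring the double induction already carried out in the proof of Proposition \ref{prop:AlgWD}, showing at each stage that the partial insertion tableau has row $\ell$ consisting exactly of the shifted column values of the crosses fixed so far with iteration index $\ell$, in their row order.

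The main obstacle I anticipate is pinning down the exact correspondence between the three geometric constraints (i)--(iii) of step (3a) and the dynamics of weak Edelman--Greene insertion — in particular, criterion (i) (``may not move through other crosses'') must be matched to the bumping behavior, and criterion (iii) (``no previously fixed cross to its right'') to the requirement that an inserted entry go strictly to the right of entries already in that tableau row, while the ``lowest possible row'' instruction must be matched to insertion always adding the new box to the \emph{first} (i.e.\ deepest-indexed, since rows are read bottom-up) row in which it fits. There is also a bookkeeping subtlety in that the RFC ordering reverses the word relative to the pipe dream, so I must be careful about whether row/column insertion or its transpose is the correct model; I expect the cleanest route is to use the version of Edelman--Greene insertion that is already implicit in \cite{assaf_demazure_2018} and simply verify compatibility on the generating relations of the insertion, rather than re-deriving the insertion from scratch. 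Modulo these matching lemmas, the rest — well-definedness of $\widetilde{D}$ (already Proposition \ref{prop:AlgWD}), weight being a permutation of $\wt(D)$ (ditto), uniqueness of the shortest sorting permutation, and the key-polynomial statement — is immediate from results already in hand.
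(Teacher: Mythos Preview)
Your overall architecture is exactly the paper's: transport via $\phi$, identify $\pi_D$ with $\pi_r$ from Theorem~\ref{thm:main-rfc}, and reduce everything to the single combinatorial claim $\wt(\widetilde{D}) = \wt(\lift(P(r)))$ for $r = \phi(D)$. That reduction, and the wrap-up using Proposition~\ref{prop:AlgWD} and the character identification in Section~\ref{sec:mainthm}, is precisely what the paper does.

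Where you diverge is in how to prove $\wt(\widetilde{D}) = \wt(\lift(P(r)))$. You propose to track Edelman--Greene bumping letter by letter and match constraints (i)--(iii) of step~(3a) to the insertion dynamics. The paper instead isolates a clean intermediate lemma you have not stated: for a \emph{highest weight} factorization $r$, the insertion tableau $P(r)$ is determined trivially --- row $i$ of $P(r)$ is exactly the block $(r^i)$ (Proposition~\ref{prop:highestWtSh}, proved from Lemma~\ref{lem:increase}). You half-recognize this when you write that the letters ``arrive in strictly increasing order and never bump one another,'' but you do not separate it out. Once it is separated, step~(1) of Algorithm~\ref{alg:EG} already produces $P(r)$ on the nose, and what remains is to match steps~(2)--(4) of the algorithm to the \emph{lift} operator of Definition~\ref{def:lift}, not to insertion at all. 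That matching is then a direct dictionary: step~(2) of the algorithm is step~(1) of lift; in step~(3a), condition~(i) encodes preservation of relative order within a column, condition~(ii) encodes ``there is a strictly smaller entry immediately to the left,'' and condition~(iii) encodes that lift proceeds column by column.

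So your anticipated main obstacle --- matching (i)--(iii) to bumping behavior, and sorting out row versus column insertion --- is aimed at the wrong target. Insertion contributes nothing once Proposition~\ref{prop:highestWtSh} is in hand; all of the geometry in Algorithm~\ref{alg:EG} is encoding the lift. Your proposed correspondences (e.g.\ (i) $\leftrightarrow$ bumping) are not the correct ones, and the double induction you sketch would be simultaneously proving that insertion is trivial and that lift matches the algorithm, entangled together. Extract the ``insertion is trivial for highest weight'' step as its own proposition first, and the rest collapses to a short dictionary check.
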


We can now extract the truncating permutation $\pi_D$ from Example \ref{ex:alg} via Theorem \ref{thm:AlgCor}.

\begin{example}
    The weight of the pipe dream $D$ from Example \ref{ex:alg} is $\wt(D) = (5,3,3,1,1,0)$. After applying Algorithm \ref{alg:EG}, we obtained the diagram $\widetilde{D}$ in Figure \ref{fig:ellIs5} such that $a_D = \wt(\widetilde{D}) = (3,5,1,3,1,0)$.  The shortest permutation $\pi_D$ such that $a_D = \pi_D(\wt(D))$ equals $\pi_D = s_1s_3$, since $(3,5,1,3,1,0) = s_1s_3(5,3,3,1,1,0)$, and no single transposition relates $\wt(\widetilde{D})$ and $\wt(D)$.
\end{example}

\subsection{Weak Edelman--Greene insertion}

To prove Theorem \ref{thm:AlgCor}, we relate $\pi_D$ with the sorting permutation $\pi_r$ from Theorem \ref{thm:main-rfc}. To obtain $\pi_r$ following \cite{assaf_demazure_2018}, we must apply a modification of Edelman--Greene insertion.  We thus continue by reviewing this insertion algorithm.

In \cite{edelman_balanced_1987}, Edelman and Greene provide a Schur expansion for Stanley symmetric functions via an insertion algorithm for adding the letters of an ordered alphabet to a tableau, as follows.

\begin{definition}
    \label{def:ed_insertion}
    Let $P$ be a Young tableau, let $P_i$ represent the $i^\text{th}$ row of $P$ from the bottom, and let $x$ be a letter in an ordered alphabet. The \emph{Edelman--Greene insertion of $x$ into $P$}, denoted $P \leftarrow x$, is the tableau created by letting $x_0 = x$, and applying the following algorithm, beginning with $i = 0$:

    \begin{enumerate}
        \item If $x_i \geq z$ for all $z \in P_{i+1}$, then append $x_i$ to $P_{i+1}$, and the algorithm terminates. 
        \item Otherwise, let $x_{i+1}$ be the smallest entry in $P_{i+1}$ such that $x_{i+1} > x_i$. Then:
        \begin{enumerate}
            \item If $x_{i+1} \neq x_i+1$ or $x_i \notin P_{i+1}$, then replace $x_{i+1}$ with $x_i$ in $P_{i+1}$. 
            \item Otherwise, leave $P_{i+1}$ unchanged.   
        \end{enumerate}
         \item Increment $i$ by one. 
         \begin{enumerate}
          \item If $P_{i+1}$ is empty, the algorithm terminates. 
          \item Otherwise, return to step (1).
          \end{enumerate}
    \end{enumerate}
\end{definition}

We illustrate Edelman--Greene insertion in the following example, constructed such that each case of the algorithm occurs exactly once.

\begin{figure}[h]
    \centering
    \def\t{7}
    \begin{tikzpicture}[scale=0.6]
        \draw (0,0) -- (3,0) -- (3,1) -- (0,1);
        \draw (2,0) -- (2,3);
        \draw (0,0) -- (0,4);
        \draw (1,0) -- (1,4);
        \draw (0,1) -- (2,1);
        \draw (0,2) -- (2,2);
        \draw (2,3) -- (0,3);
        \draw (0,4) -- (1,4);

        \node [align=center] at (-1, 2) {\large $P = $};
        \node [align=center] at (0.5, 0.5) {\Large 1};
        \node [align=center] at (1.5, 0.5) {\Large 2};
        \node [align=center] at (2.5, 0.5) {\Large 4};
        \node [align=center] at (0.5, 1.5) {\Large 3};
        \node [align=center] at (1.5, 1.5) {\Large 5};
        \node [align=center] at (0.5, 2.5) {\Large 5};
        \node [align=center] at (1.5, 2.5) {\Large 6};
        \node [align=center] at (0.5, 3.5) {\Large 6};

        \draw (0+\t,0) -- (3+\t,0) -- (3+\t,1) -- (0+\t,1);
        \draw (2+\t,0) -- (2+\t,3);
        \draw (0+\t,0) -- (0+\t,4);
        \draw (1+\t,0) -- (1+\t,4);
        \draw (0+\t,1) -- (2+\t,1);
        \draw (0+\t,2) -- (2+\t,2);
        \draw (2+\t,3) -- (0+\t,3);
        \draw (0+\t,4) -- (1+\t,4);
        \draw (1+\t, 4) -- (2+\t, 4) -- (2+\t, 3);

        \node [align=center] at (-1+\t-0.5, 2) {\large $P \leftarrow 2 = $};
        \node [align=center] at (0.5+\t, 0.5) {\Large 1};
        \node [align=center] at (1.5+\t, 0.5) {\Large 2};
        \node [align=center] at (2.5+\t, 0.5) {\Large 2};
        \node [align=center] at (0.5+\t, 1.5) {\Large 3};
        \node [align=center] at (1.5+\t, 1.5) {\Large 4};
        \node [align=center] at (0.5+\t, 2.5) {\Large 5};
        \node [align=center] at (1.5+\t, 2.5) {\Large 6};
        \node [align=center] at (0.5+\t, 3.5) {\Large 6};
        \node [align=center] at (1.5+\t, 3.5) {\Large 6};
    \end{tikzpicture}
    \caption{A Young tableau $P$ (left) and the result of inserting the letter $2$ (right).}
    \label{fig:insertion_ex}
\end{figure}

\begin{example}
    \label{ex:insertion}
    Consider the tableau $P$ as shown on the left in Figure \ref{fig:insertion_ex}. We will compute $P \leftarrow 2$, meaning that we will insert $2$ into $P$ via Definition \ref{def:ed_insertion}. 
    
    Begin by setting $i = 0$ and $x_0 = 2$. Step (1) asks if $2 = x_0 \geq z$ for all $z \in P_{i+1} = P_{0+1}$.  Since $2 \not\geq 4 \in P_1$, we proceed to step (2) and set $x_{i+1} = x_1 = 4$. Now observe that $4 = x_1 \neq x_0 + 1 = 3$, and so step (2a) says that we replace $x_1 = 4$ with $x_0 = 2$ in $P_1$. In step (3), we return to step (1) of the algorithm instead with $i = 1$.

    Now $i=1$ and $x_1 = 4$.  Since $4 \not\geq 5 \in P_2$, we proceed to step (2) with $x_{i+1}=x_2 = 5$. Although $x_2 = x_1 + 1$, we also see that $x_1 = 4 \notin P_2$. We thus replace $x_2=5$ with $x_1=4$ in $P_2$, and repeat the process from step (1) with $i=2$.

    Now $i=2$ and $x_2=5$. Since $5 \not\geq 6 \in P_3$, we proceed to step (2) with $x_3 = 6$. Here, both $x_3 = x_2 + 1$ and $x_2 = 5 \in P_3$, and so we are in case (2b). We thus leave $P_3$ unchanged, and repeat with $i=3$.

    Now $i=3$ and $x_3 = 6$. In this case, $x_3=6 \geq z$ for all $z \in P_4$, and so we simply append $6$ to the end of $P_4$ according to step (1).  Finally, since there are no boxes in $P_5$, the algorithm terminates in step (3a). The resulting tableau $P \leftarrow 2$ is displayed on the right in Figure \ref{fig:insertion_ex}.
\end{example}

In \cite[Section 4.2]{assaf_demazure_2018}, the Edelman--Greene insertion algorithm is applied iteratively by inserting the letters of a given reduced word.  Let  $\bfa = a_1 \cdots a_p \in R(w)$ for some $w \in S_n$. The \emph{insertion tableau for $\bfa$}, denoted $P(\bfa)$, is the tableau that results from Edelman--Greene inserting the letters $a_1, \dots, a_p$ one by one, starting with an empty tableau.
An example of an insertion tableau is given in Figure \ref{fig:insertion_tab}, demonstrating the letter-by-letter insertion of the word $\bfa = 23124$.

\begin{figure}[h]
    \centering
    \def\t{3.5}
    \begin{tikzpicture}[scale=0.6]
        \draw (0,1) -- (0,0) -- (1,0);

        \draw (\t, 0) -- (1+\t, 0) -- (1+\t, 1) -- (\t, 1) -- (\t, 0);
        \node [align=center] at (0.5+\t, 0.5) {\Large 2};

        \draw (2*\t, 0) -- (2*\t+2, 0) -- (2*\t+2, 1) -- (2*\t, 1) -- (2*\t, 0);
        \draw (2*\t+1, 0) -- (2*\t+1, 1);
        \node [align=center] at (0.5+2*\t, 0.5) {\Large 2};
        \node [align=center] at (1.5+2*\t, 0.5) {\Large 3};

        \draw (3*\t, 0) -- (3*\t+2, 0) -- (3*\t+2, 1) -- (3*\t, 1) -- (3*\t, 0);
        \draw (3*\t+1, 0) -- (3*\t+1, 1);
        \draw (3*\t, 1) -- (3*\t, 2) -- (3*\t+1, 2) -- (3*\t+1,1);
        \node [align=center] at (0.5+3*\t, 0.5) {\Large 1};
        \node [align=center] at (1.5+3*\t, 0.5) {\Large 3};
        \node [align=center] at (0.5+3*\t, 1.5) {\Large 2};

        \draw (4*\t, 0) -- (4*\t+2, 0) -- (4*\t+2, 1) -- (4*\t, 1) -- (4*\t, 0);
        \draw (4*\t+1, 0) -- (4*\t+1, 1);
        \draw (4*\t, 1) -- (4*\t, 2) -- (4*\t+1, 2) -- (4*\t+1,1);
        \draw (4*\t+1, 2) -- (4*\t+2, 2) -- (4*\t+2, 1);
        \node [align=center] at (0.5+4*\t, 0.5) {\Large 1};
        \node [align=center] at (1.5+4*\t, 0.5) {\Large 2};
        \node [align=center] at (0.5+4*\t, 1.5) {\Large 2};
        \node [align=center] at (1.5+4*\t, 1.5) {\Large 3};

        \draw (5*\t, 0) -- (5*\t+2, 0) -- (5*\t+2, 1) -- (5*\t, 1) -- (5*\t, 0);
        \draw (5*\t+1, 0) -- (5*\t+1, 1);
        \draw (5*\t, 1) -- (5*\t, 2) -- (5*\t+1, 2) -- (5*\t+1,1);
        \draw (5*\t+1, 2) -- (5*\t+2, 2) -- (5*\t+2, 1);
        \draw (5*\t+2, 0) -- (5*\t+3, 0) -- (5*\t+3, 1) -- (5*\t+2, 1);
        \node [align=center] at (0.5+5*\t, 0.5) {\Large 1};
        \node [align=center] at (1.5+5*\t, 0.5) {\Large 2};
        \node [align=center] at (0.5+5*\t, 1.5) {\Large 2};
        \node [align=center] at (1.5+5*\t, 1.5) {\Large 3};
        \node [align=center] at (2.5+5*\t, 0.5) {\Large 4};
    \end{tikzpicture}
    \caption{The insertion tableau $P(23124)$, with the result after each insertion.} 
    \label{fig:insertion_tab}
\end{figure}

We now consider the lift operation on Young tableaux, originally defined by Assaf in \cite[Section 5.2]{assaf-EG}. Here, the operator produces tableaux of ``key shape'' in the sense of \cite[Definition 3.1]{assaf_demazure_2018}, generalizing \cite[Definition 2.5]{assaf-weak_dual_equiv}.

\begin{definition}[{Section 5.2 \cite{assaf_demazure_2018}}]
    \label{def:lift}
    If $P$ is a Young tableau with rows enumerated from bottom to top, produce its \emph{lift}, denoted $\lift(P)$, as follows. 
    \begin{enumerate}
        \item Position each entry of the leftmost column in a row whose index matches the entry.
        \item For each subsequent column, raise the entries from top to bottom of the column, maintaining their relative order, to the highest available row such that there exists an entry to its immediate left which is is strictly smaller.
    \end{enumerate}
\end{definition}

\begin{figure}[h]
    \centering
    \def\t{7}
    \begin{tikzpicture}[scale=0.6]
        \draw (0,0) -- (3,0) -- (3,1) -- (0,1);
        \draw (0,2) -- (2,2) -- (2,0);
        \draw (0,3) -- (1,3) -- (1,0);
        \draw (0,0) -- (0,3);

        \draw (\t, 0) -- (\t, 4);
        \draw (\t,0) -- (\t+1, 0) -- (\t+1, 2);
        \draw (\t, 1) -- (\t+3,1) -- (\t+3, 2) -- (\t+0,2);
        \draw (\t+2,1) -- (\t+2,2);
        \draw (\t, 3) -- (\t+2, 3) -- (\t+2, 4) -- (\t, 4);
        \draw (\t+1, 3) -- (\t+1, 4);

        \node [align=center] at (0.5, 0.5) {\Large $1$};
        \node [align=center] at (1.5, 0.5) {\Large $3$};
        \node [align=center] at (2.5, 0.5) {\Large $4$};
        \node [align=center] at (0.5, 1.5) {\Large $2$};
        \node [align=center] at (1.5, 1.5) {\Large $5$};
        \node [align=center] at (0.5, 2.5) {\Large $4$};

        \node [align=center] at (\t+0.5, 0.5) {\Large $1$};
        \node [align=center] at (\t+0.5, 1.5) {\Large $2$};
        \node [align=center] at (\t+1.5, 1.5) {\Large $3$};
        \node [align=center] at (\t+2.5, 1.5) {\Large $4$};
        \node [align=center] at (\t+0.5, 3.5) {\Large $4$};
        \node [align=center] at (\t+1.5, 3.5) {\Large $5$};

        \node [align=center] at (-1,1.5) {\large $P = $};
        \node [align=center] at (-1.5+\t,1.5) {\large $\lift(P) = $};
    \end{tikzpicture}
    \caption{A Young tableau $P$ (left) and the key tableau given by its lift (right).}
    \label{fig:lift_ex}
\end{figure}

\begin{example}
    \label{ex:lift}
    Consider $P$ as depicted on the left in Figure \ref{fig:lift_ex}.  To compute $\lift(P)$, we start by lifting the 4 in the first column up to row 4 as required by step (1). Next, we lift column 2. We first lift $5$ to row $4$, since this is the highest row with an entry strictly less than $5$ in the first column. Similarly, $3$ then gets lifted to row $2$. Finally in column $3$, we lift $4$ to row $2$ since this is the highest row with an entry less than 4 to its immediate left.  The outcome $\lift(P)$ is shown on the right in Figure \ref{fig:lift_ex}.
\end{example}

We now combine the Edelman--Greene insertion process with the lift operator. Given any reduced word $\bfa \in R(w)$, define the \emph{weak insertion tableau} to be $\lift(P(\bfa))$. Assaf refers to calculating $\lift(P(\bfa))$ as the \emph{weak Edelman--Greene correspondence} \cite{assaf-EG}. Denote by $\wt(\lift(P(\bfa)))$ the weak composition whose $i^{\text{th}}$ coordinate equals the number of boxes in row $i$ of $\lift(P(\bfa))$, enumerated from bottom to top.  

\begin{definition}\label{def:pi_r}
    Given any $w \in S_n$ and $r \in RFC(w)$, denote by $\pi_r = [\pi_1 \cdots \pi_n]$ the shortest permutation that sorts the weak composition $a = \wt(\lift(P(r))$, in the sense that $(a_{\pi_1}, \dots, a_{\pi_n})$ is a partition.
\end{definition}

Assaf and Schilling prove that $\pi_r$ is the truncating permutation for the Demazure crystal corresponding to any highest weight factorization $r$; see Theorem \ref{thm:main-rfc}.  Equivalently, the composition $\wt(\lift(P(r))$ indexes a key polynomial in \cite[Corollary 5.12]{assaf_demazure_2018}, denoted there instead by $\sh(\widehat{P}(r))$. Our remaining goal is thus to explicitly relate $\pi_D$ from Theorem \ref{thm:main} with this permutation $\pi_r$.

\subsection{Proof of Theorem \ref{thm:AlgCor}}

This final subsection is dedicated to the proof of Theorem \ref{thm:AlgCor}, which then completes Theorem \ref{thm:main}.  We begin with several preparatory lemmas, and our final proposition identifies the insertion tableau obtained by applying Edelman--Greene insertion to any highest weight factorization.

We first show that each sequence of corresponding letters from blocks of a highest weight factorization strictly decreases.

\begin{lemma}\label{lem:increase}
  Let $r \in RFC(w)$ for $w \in S_n$ be a highest weight factorization. If $r^i_j$ denotes the $j^{\text{th}}$ element in block $(r^i)$, then $r_j^{n-1} > \cdots > r_j^2 > r_j^1$, where we omit letters from any empty blocks.
\end{lemma}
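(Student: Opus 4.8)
The plan is to transfer the statement to highest weight \emph{pipe dreams} via the weight-preserving, crystal-equivariant bijection $\phi$ of Proposition~\ref{prop:wtBijPD}, where it becomes an essentially immediate consequence of Lemma~\ref{lem:HWPDstep1}. First I would apply $\phi$ with $w$ replaced by $w^{-1}$: since $\phi \colon RP(w^{-1}) \to RFC(w)$ is a bijection which intertwines the raising operators and their vanishing (Corollary~\ref{cor:intertwine-e}), the preimage $D = \phi^{-1}(r) \in RP(w^{-1})$ satisfies $e_i(D) = 0$ for all $1 \leq i < n$ precisely because $r$ is a highest weight factorization; hence $D$ is a highest weight pipe dream. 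By Lemma~\ref{lem:HWPD}, $\wt(D)$ is a partition, and since $\phi$ preserves weight, $\wt(r) = \wt(D)$ is a partition as well; in particular the blocks of $r$ containing at least $j$ letters form a contiguous initial segment $(r^1), \dots, (r^m)$ for some $m$, so the assertion $r^{n-1}_j > \cdots > r^1_j$, with empty blocks omitted, amounts to a chain of inequalities $r^{i+1}_j > r^i_j$ between consecutive blocks, each of which genuinely has a $j$-th letter.

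Next I would translate the block entries back to crosses. By Corollary~\ref{cor:PDtoRFCwhole} — equivalently, by the formula $(i,j) \in D_+ \mapsto i+j-1 \in (r^i)$ of Proposition~\ref{prop:wtBijPD} — the block $(r^i)$ is obtained by shifting the crosses in row $i$ of $D$ rightward by $i-1$ columns and recording their columns from left to right. Since this shift preserves left-to-right order within a fixed row, the $\ell$-th entry $r^i_\ell$ of $(r^i)$ equals $j^i_\ell + i - 1$, where $(i, j^i_\ell)$ is the position of the $\ell$-th cross from the left in row $i$ of $D$. Now Lemma~\ref{lem:HWPDstep1} says exactly that, for each fixed $\ell$ with $1 \leq \ell \leq \wt(D)_1$, the shifted column sequence $(j^i_\ell + i - 1)$ is strictly increasing in $i$ over the rows containing an $\ell$-th cross. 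Reindexing $\ell$ as $j$, this reads $r^1_j < r^2_j < \cdots$ along the non-empty blocks, which is the claim.

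I do not expect a genuine obstacle here, since the real work has already been carried out in Lemmas~\ref{lem:HWPDincseq}--\ref{lem:HWPDstep1} and in establishing that $\phi$ is crystal-equivariant in Section~\ref{sec:intertwine}. The only point requiring care is the bookkeeping that identifies ``the $j$-th letter of block $(r^i)$, omitting empty blocks'' with ``the $j$-th cross from the left in row $i$ of $D$'' uniformly in $i$; this is exactly where the partition property of $\wt(D)$ enters, guaranteeing both that the relevant blocks (equivalently rows) form an initial segment and that each such row really does contain a $j$-th cross, so that Lemma~\ref{lem:HWPDstep1} applies without exception. A more hands-on alternative, working directly from Definition~\ref{def:lowering-rfc} and the condition $e_i(r) = 0$, is possible but less clean, so I would present the argument via $\phi$.
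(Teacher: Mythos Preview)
Your proposal is correct and follows essentially the same approach as the paper's own proof, which simply reads: ``This follows directly from Lemma~\ref{lem:HWPDstep1} and the definition of $\Phi$ from Proposition~\ref{prop:wtBijPD}.'' You have supplied considerably more detail than the paper does, but the route---pull back to a highest weight pipe dream via the crystal-equivariant bijection and invoke the strict increase of the shifted column sequence---is identical.
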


\begin{proof}
This follows directly from Lemma \ref{lem:HWPDstep1} and the definition of $\Phi$ from Proposition \ref{prop:wtBijPD}.
\end{proof}

We now run the Edelman--Greene insertion algorithm on a highest weight factorization.

\begin{example}\label{ex:P(r)figs}
    The three highest weight factorizations for $w = [21543] \in S_5$ are $(\;)(4)(3)(14),$ $(\;)(\;)(34)(13),$ and $(\;)(\;)(4)(134)$. Applying the Edelman--Greene algorithm to each of these factorizations produces the insertion tableaux pictured in Figure \ref{fig:highestPDandTab}, recorded from left to right.  Note that the result of the algorithm simply places the entries of block $(r^i)$ in row $i$ of the tableau.
\end{example}

\begin{figure}[h]
    \centering
    \includegraphics{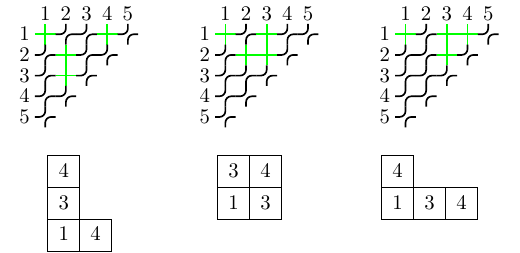}
    \caption{Insertion tableaux $P(r)$ and pipe dreams $\Phi^{-1}(r)$ for the highest weight factorizations $r \in \{(\;)(4)(3)(14), (\;)(\;)(34)(13),(\;)(\;)(4)(134)\} \subset RFC([21543])$.}
    \label{fig:highestPDandTab}
\end{figure}

The following proposition generalizes the observation recorded in Example \ref{ex:P(r)figs}.

\begin{proposition}\label{prop:highestWtSh}
    Let $r \in RFC(w^{-1})$ for some $w \in S_n$.  If $r$ is a highest weight factorization, then the insertion tableau $P(r)$ is given by placing the entries in block $(r^i)$ of $r$ in row $i$ of the tableau.
\end{proposition}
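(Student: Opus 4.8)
The plan is to induct on the number of blocks processed, equivalently on the letters inserted, showing that at each stage the Edelman--Greene insertion of the next letter simply appends it to the end of the appropriate row without ever bumping. Recall that the letters of $r = (r^{n-1})\cdots(r^1)$ are inserted in the order determined by reading the word $\bfa \in R(w^{-1})$ from left to right; by the construction of $\phi$ (Proposition \ref{prop:wtBijPD}) and Corollary \ref{cor:PDtoRFCwhole}, this corresponds to reading the blocks from $(r^{n-1})$ down to $(r^1)$, and within each block from smallest letter to largest. Since each block $(r^i)$ is strictly increasing, inserting the letters of a single block $(r^i)$ in increasing order into an empty row $i$ just fills row $i$ with those letters in order, provided no bumping into row $i$ occurs from below; so the content of the claim is really that no letter ever gets bumped upward out of its intended row.

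The key structural input is Lemma \ref{lem:increase}: the $j^{\text{th}}$ entry of block $(r^i)$ strictly exceeds the $j^{\text{th}}$ entry of block $(r^{i-1})$, i.e.\ columns of the would-be tableau strictly increase going up. Combined with each row (block) being strictly increasing left to right, this says precisely that the array $P$ obtained by stacking the blocks is already a semistandard — in fact column-strict and row-strict — Young tableau of straight shape $\wt(P) = (|r^1|, |r^2|, \dots)$, reading rows from the bottom. The heart of the argument is then: if we insert the letters of $(r^{n-1}), \dots, (r^1)$ in the order described into the empty tableau, the result is exactly this array $P$. I would prove this by induction on the number $k$ of letters inserted, with the inductive hypothesis that after $k$ insertions the current tableau is the sub-array of $P$ consisting of its first several complete rows together with an initial segment of the next row up. Given this hypothesis, when the next letter $x$ (the next entry of block $(r^i)$, say the $j^{\text{th}}$) is inserted, it is appended to the end of the current (partial) row $i-1$ below, then at step (1) of Definition \ref{def:ed_insertion} one checks that $x$ is weakly larger than every entry currently in row $i$: the entries presently in row $i$ are $r^i_1 < \cdots < r^i_{j-1}$ (the first $j-1$ entries of block $(r^i)$, all strictly less than $x = r^i_j$), so step (1) appends $x$ to row $i$ and the algorithm terminates, with no bumping. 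Wait — I should double check the row indexing convention: letters of block $(r^i)$ must land in row $i$, and I want to confirm that ``row $i+1$'' in Definition \ref{def:ed_insertion} (the $(i+1)^{\text{st}}$ row from the bottom when the insertion counter is at $i-1$ and we have just placed into row $i-1$\,) lines up correctly; a careful bookkeeping of the counter versus the physical row is needed but is routine once the no-bump phenomenon is established.

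The main obstacle, and the only place where a genuine inequality argument is required, is verifying that when we begin inserting block $(r^i)$, its first (smallest) letter $r^i_1$ does not bump anything in row $i$: but row $i$ is empty at that moment by the inductive hypothesis, so step (1) trivially applies. More delicate is confirming that the insertion of $r^i_j$ really does get appended to the partial row $i$ rather than triggering a bump into row $i$ from a lower row along the way; this is where I use Lemma \ref{lem:increase} to guarantee that when $r^i_j$ is placed at the end of row $i-1$, it is strictly larger than $r^{i-1}_j$ and hence weakly larger than all current entries of row $i$ (which are $r^i_1, \dots, r^i_{j-1}$), so the chain of comparisons in Definition \ref{def:ed_insertion} resolves immediately at the first nonempty row above. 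Once this is in hand, the induction closes and we conclude that $P(r)$ is the array whose $i^{\text{th}}$ row from the bottom is exactly the block $(r^i)$, as claimed. I would also remark that the special case (2b) of Edelman--Greene insertion — where a bump is suppressed because $x_{i+1} = x_i + 1$ and $x_i$ is already present — never arises here precisely because no bumping occurs at all, which is consistent with the clean outcome in Example \ref{ex:P(r)figs}.
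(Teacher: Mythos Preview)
Your central claim---that no bumping occurs during Edelman--Greene insertion of a highest weight factorization---is false, and with it the inductive hypothesis you propose. Edelman--Greene insertion always begins in row $1$ (Definition~\ref{def:ed_insertion} starts with counter $i=0$ and examines $P_{i+1}=P_1$), so when you begin inserting the letters of block $(r^i)$ they enter row $1$, not row $i$. At that moment row $1$ holds the entries of block $(r^{i+1})$, not of $(r^1)$ (which has not yet been inserted), so the incoming letters \emph{do} bump. Trace $r=(\;)(4)(3)(14)$ from Example~\ref{ex:P(r)figs}: inserting $4$ gives row $1=[4]$; inserting $3$ bumps $4$ to row $2$; inserting $1$ bumps $3$, which bumps $4$; only the final $4$ is appended without bumping. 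Your stated inductive hypothesis---that the partial tableau after $k$ insertions consists of the first several complete rows of the final $P$---already fails after one insertion, since $r^{n-1}_1$ lands in row $1$, whereas row $1$ of the final $P$ is supposed to be block $(r^1)$.

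The correct picture, and the one the paper proves, is that after inserting blocks $(r^{n-1}),\dots,(r^{i+1})$ the partial tableau has block $(r^{i+\ell})$ sitting in row $\ell$ for each $\ell=1,\dots,n-1-i$. Inserting $r^i_j$ then bumps $r^{i+1}_j$ out of row $1$, which in turn bumps $r^{i+2}_j$ out of row $2$, and so on up column $j$; Lemma~\ref{lem:increase} supplies $r^i_{j}<r^{i+1}_j$, and together with strict increase within blocks this shows it is precisely $r^{i+1}_j$ that is displaced via case (2a). The induction to set up is thus on the number of blocks already inserted, with a nested induction on the position $j$ within the current block, tracking that row $1$ reads $r^i_1,\dots,r^i_{j-1},r^{i+1}_j,r^{i+1}_{j+1},\dots$ just before $r^i_j$ arrives.
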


\begin{proof}
     Let $r \in RFC(w^{-1})$, and suppose that $r$ is a highest weight factorization.  Our general strategy will be to show that when we run the Edelman--Greene insertion algorithm to obtain the insertion tableau $P(r)$, inserting the letter $r_j^i$ replaces $r_j^{i+1}$ via step (2a) by bumping all $r_j^{\ell}$ for $\ell \geq i+1$ up one row within the same column. 
     
     We proceed first by induction on the number of nonempty blocks in $r$. Suppose that $r = (\;)\cdots (\;)(r^1)$ has only one nonempty block, having entries $r_1^1 < \cdots < r^1_m$. Then case (1) applies in every iteration of the Edelman--Greene insertion algorithm, and the entries of block $(r^1)$ are simply placed in row 1 of the tableau, verifying the base case.

    Now suppose that $r$ has at least two nonempty blocks, and proceed to the induction step. Let $r' = (r^{n-1})(\cdots)(r^{i+1})$, so that the insertion tableau $P(r')$ is obtained by placing the entries of blocks $(r^{i+1}), \dots, (r^{n-1})$ in rows $1, \dots, n-i-1$ of the tableau $P(r')$, by induction on the number of blocks. We must apply the Edelman--Greene insertion algorithm using the letters of block $(r^i)$.

     We now further proceed by induction on the length of the block. Suppose that $(r^i) = (r^i_1)$ has one letter. Since $r$ is a highest weight factorization, then the weight of $\Phi^{-1}(r) = D$ is a partition by Corollary \ref{cor:intertwine-e} and Lemma \ref{lem:HWPD}. Since $\Phi$ is weight-preserving, all blocks $(r^{i+1}), \dots, (r^{n-1})$ also have at most one letter. 
    
    We insert the letter $r^i_1$ into  $P(r')$, containing the entries of blocks $(r^{i+1}), \dots, (r^{n-1})$ in rows $1, \dots, n-i-1$, respectively.  Since the letters within each block are strictly increasing and we have shown in Lemma \ref{lem:increase} that $r_1^i < r_1^{i+1}$, then $r_1^i \notin P_1(r')$ and case (2a) of the insertion algorithm applies.  We thus replace $r^{i+1}_1$ by $r_1^i$ in row 1. Iterating using the fact that blocks $(r^{i+1}), \dots, (r^{n-1})$ also have at most one letter, we see that $r^\ell_1$ simply bumps $r^{\ell+1}_1$ up one row in the same column for all $\ell \geq i+1$. As there are no additional letters in any of the last $n-i-1$ blocks of $r$, the algorithm terminates and the result holds in the case where $(r^i)$ has one letter.

    Now suppose that $(r^i)$ has at least two letters and that we are inserting the letter $r^i_j$ for $j \geq 2$ into the first row. Denote the length of block $(r^{i+1})$ by $k$, and first suppose that $j \leq k$.  By induction on the length of $(r^i)$, say $m$, we know that letter $r^i_{j-1}$ has bumped $r^{i+1}_{j-1}$ up one row in the same column.  In particular, the first row of the current tableau is filled by the initial subsequence $r^i_1, \dots, r^i_{j-1}, r^{i+1}_j$. To insert $r^i_j$, recall that $r^i_{j-1} < r^i_j$ since the entries of $r$ increase in blocks. In addition, recall by Lemma \ref{lem:increase} that $r^i_j < r^{i+1}_j$. Therefore, $r_j^i$ is not in the first row filled by the initial subsequence $r^i_1, \dots, r^i_{j-1}, r^{i+1}_j$.  Thus, the letter $r_j^i$ indeed replaces $r^{i+1}_j$ via case (2a) of the insertion algorithm if $j \leq k$. By the inductive hypothesis, the second row now begins with the initial subsequence $r^{i+1}_1, \dots, r^{i+1}_{j-1}, r^{i+1}_j$, provided that $j \leq k$. 

    For all $r^i_j$ such that $j > k$, namely $r^i_{k+1} < \cdots < r^i_m$, since all letters of block $(r^{i+1})$ have already been bumped to the second row and letters strictly increase within block $(r^i)$, then case (1) applies. These remaining letters are then appended to the end of the bottom row by the insertion algorithm. Altogether, we have proved that the entries in block $(r^i)$ are simply placed in the first row of the tableau when inserting the letters from block $i$.  The result now follows by induction.
\end{proof}

We are now prepared to prove Theorem \ref{thm:AlgCor}, which says that $\operatorname{wt}(\widetilde{D}) = \pi_D(\wt(D))$ for any highest weight pipe dream $D \in RP(w)$, where $\pi_D$ is the truncating permutation from Theorem \ref{thm:main}.

\begin{proof}[Proof of Theorem \ref{thm:AlgCor}]
    Let $D \in RP(w)$ be a highest weight pipe dream for $w \in S_n$. Denote by $\Phi$ the bijection from Proposition~\ref{prop:wtBijPD}, so that $\Phi(D) = r \in RFC(w^{-1})$ is a highest weight factorization by Corollary \ref{cor:intertwine-e}. By Theorem \ref{thm:main-rfc}, the truncating permutation $\pi_r = [\pi_1 \cdots \pi_n] \in S_n$ for the Demazure crystal $B_{\pi_r}(\wt(r))$ is the shortest permutation that sorts the weak composition $a = \wt(\lift(P(r))$, in the sense that $(a_{\pi_1}, \dots, a_{\pi_n})$ is a partition. By Proposition \ref{prop:highestWtSh}, however, since $r$ is a highest weight factorization, then $\wt(P(r))$ is a partition. Moreover, since by Proposition \ref{prop:highestWtSh} the tableau $P(r)$ records the entries of block $(r^i)$ in row $i$, then $\wt(P(r)) = \wt(r)$.  Therefore, Theorem \ref{thm:main-rfc} says that $\wt(r) = (a_{\pi_1}, \dots, a_{\pi_n})$, or equivalently,
    \begin{equation}\label{eq:lifteq}
        \wt(r) = \pi_r^{-1} a = \pi_r^{-1} \left( \wt(\lift(P(r)))\right)
    \end{equation}
    because the left action of $S_n$ on $\Z^n$ permutes coordinates.

    Since $\Phi$ is weight-preserving, then $\wt(r) = \wt(\Phi(D)) = \wt(D)$. 
    We claim that $\wt(\lift(P(r))) = \wt(\widetilde{D})$. 
    To prove this, we show that Algorithm \ref{alg:EG} acts on the rows of $D$ in the same way that the lift operator acts on the rows of $P(r)$. By definition of $\Phi$, step (1) of Algorithm \ref{alg:EG} recovers the entries of $r$ by row from top to bottom, reading column indices. Proposition \ref{prop:highestWtSh} equates this output with the insertion tableau $P(r)$. Next observe that step (2) of Algorithm~\ref{alg:EG} corresponds to step (1) of the lift operator from Definition~\ref{def:lift}, noting that the leftmost column of $\lift(P(r))$ corresponds via $\Phi$ to the leftmost cross in each row of the shifted version of $D$. In step (3a) of Algorithm~\ref{alg:EG}, condition (i) corresponds to maintaining the relative order of entries, condition (ii) corresponds to the existence of an entry on the immediate left that is smaller, and condition (iii) reflects that the lift operates column by column. As the lift operator moves entries one at a time within a column, steps (3b) and (3c) of Algorithm \ref{alg:EG} move crosses of $\widetilde{D}$ in the same manner. Incrementing $\ell$ in step (4) of Algorithm \ref{alg:EG} ensures that step (3), equivalently the lift operation, is applied to each column from left to right on $P(r)$. Therefore, $\wt(\lift(P(r))) = \wt(\widetilde{D})$, and so \eqref{eq:lifteq} becomes
    \begin{equation}\label{eq:perm}
        \wt(D) = \wt(r) = \pi_r^{-1} \left( \wt(\lift(P(r)))\right) = \pi_r^{-1} (\wt(\widetilde{D})).
    \end{equation}

    Finally, recall from the proof of Theorem \ref{thm:main} that the bijection $\Phi : RP(w) \to RFC(w^{-1})$ intertwines with all raising and lowering crystal chute moves as well as the weight function, and thus preserves the Demazure crystal structure.  Since $\pi_r$ is the truncating permutation for the Demazure crystal $B_{\pi_r}(\wt(r))$, then $\pi_r$ is also the truncating permutation for the Demazure crystal denoted in Theorem \ref{thm:main} by $B_{\pi_D}(\wt(D))$, where $D = \Phi^{-1}(r)$. Therefore, rearranging \eqref{eq:perm}, we have $\wt(\widetilde{D}) = \pi_r (\wt(D)) = \pi_D (\wt(D)),$ completing the proof of Theorem \ref{thm:AlgCor}, and thus also Theorem \ref{thm:main-key} and Theorem \ref{thm:main}.
    \end{proof}

\bibliography{references}
\bibliographystyle{alphanum}

\end{document}